\newcommand{\NN}{{\mathbb{N}}}
\newcommand{\ZZ}{{\mathbb{Z}}}
\newcommand{\RR}{{\mathbb{R}}}
\newcommand{\CC}{{\mathbb{C}}}
\newcommand{\PP}{{\mathbb{P}}}
\newcommand{\norm}[1]{{\left\| {#1} \right\|}}
\renewcommand{\d}{{\mathrm{d}}}
\newcommand{\dc}{{\mathrm{d}^c}}
\newcommand{\xb}{\overline{x}}               
\newcommand{\yb}{\overline{y}}               
\newcommand{\zb}{\overline{z}}               
\newcommand{\partialb}{\overline{\partial}}  
\newcommand{\id}{{\mathop{\mathrm{id}}}}
\newcommand{\nchar}[2]{{T_0\left( {#1}, {#2} \right)}}  
\newcommand{\ncharhol}[2]{{m\left( {#1}, {#2} \right)}} 
\newcommand{\nchardiv}[2]{{N\left( {#1}, {#2} \right)}} 
\newcommand{\ncharah}[2]{{\widetilde{m}\left( {#1}, {#2} \right)}}  
\newcommand{\ncharc}[2]{{\widetilde{m}_{\CC^2}\left( {#1}, {#2} \right)}}  
\newtheorem{theorem}{Theorem}[section]
\newtheorem*{theorem*}{Theorem}
\newtheorem{lemma}[theorem]{Lemma}
\newtheorem{proposition}[theorem]{Proposition}
\newtheorem{corollary}[theorem]{Corollary}
\theoremstyle{definition}
\newtheorem{definition}[theorem]{Definition}
\newtheorem{example}[theorem]{Example}
\newtheorem{remark}[theorem]{Remark}
\title[Holomorphic Automorphisms of Danielewski surfaces II]{Holomorphic Automorphisms of Danielewski surfaces II -- structure of the overshear group}
\author{Rafael B. Andrist}
\author{Frank Kutzschebauch}
\author{Andreas Lind}
\begin{document}
\begin{abstract}
We apply Nevanlinna theory for algebraic varieties to Danielewski surfaces and investigate their group of holomorphic automorphisms. Our main result states that the overshear group which is known to be dense in the identity component of the holomorphic automorphism group, is a free amalgamated product.
\end{abstract}\maketitle
\tableofcontents

\section{Introduction}
In the present paper we continue the study of the holomorphic automorphism group of Danielewski surfaces started in \cite{Lind}.\begin{definition}
Given a non-constant polynomial $p(z) \in \CC[z]$ with only simple zeros,
\[
D_p := \left\{ (x, y, z) \in \CC^3 \,:\, x \cdot y = p(z) \right\} \subset \CC^3
\]
with the complex structure induced from $\CC^3$, is called a \emph{Danielewski surface}.
\end{definition}
Since $p'$ and $p$ have disjoint zero sets, the surface is actually a $2$-dimensional (Stein) manifold. Danielewski surfaces have been very intensively studied
in Affine Algebraic Geometry and are in fact affine modifications of $\CC^2$.
If $p$ is affine linear, then $D_p$ is isomorphic to $\CC^2$.
If $p$ is quadratic, then $D_p$ is isomorphic to $D_q$ with $q(z) = z \cdot (z - 1)$ by an affine linear change of coordinates in $\CC^3$.

In the following we will only consider Danielewksi surfaces $D_p$ with $\deg p \geq 3$.
In \cite{Lind} we defined the notion of shears and overshears
on Danielewski surfaces and proved that the subgroup generated by overshear maps shares the following
similarity with the overshear group of $\CC^n$: it is  dense (in compact-open topology) in the identity component of the holomorphic automorphism group. For $\CC^n$ this is called the main theorem of Anders\'en-Lempert theory and is due to Anders\'en and Lempert \cite{AndersenLempert}.

\begin{definition}
A map $O_{f,g} : D_p \to D_p$ of the form
\[
O_{f,g}(x, y, z)  = \left( x, y + \frac{1}{x}\left(p(z e^{x f(x)} + x g(x)) - p(z)\right), z e^{x f(x)} + x g(x) \right)
\]
(or with the role of 1\textsuperscript{st} and 2\textsuperscript{nd} coordinates exchanged, $IO_{f,g}I$) is called an an \emph{overshear map}, where $f, g : \CC \to \CC$ are holomorphic functions (and the involution $I $ of $ D_p$  is the map interchanging $x$ and $y$) .
\end{definition}
These maps are obviously well defined maps from $D_p \to D_p$, as they can actually be continued to a holomorphic  map $\CC^3 \to \CC^3$ which can be seen by Taylor-developing the polynomial $p$ at $z$ to first order in $x$. Because of the relation $O_{f,g} \circ O_{h,k} = O_{f+h,g\cdot e^{x \cdot h}+k}$ the maps are easily seen to be holomorphic automorphisms of $D_p$. The group generated by overshears is called the overshear group $\text{OS}(D_p)$.

In the present paper we show the second similarity between the overshear group on Danielewski surfaces and
the overshear group on $\CC^2$:

\begin{theorem*}[\ref{StructureTheorem}]
Let $D_p$ be a Danielewski surface and assume that $\deg(p) \geq 4$, then the overshear group, $\text{OS}(D_p)$, is a free amalgamated product of $O_1$ and $O_2$, where $O_1$ is generated by $O_{f,g}$ and $O_2$ generated by $IO_{f,g}I$.
\end{theorem*}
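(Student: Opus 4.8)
The plan is to recast the statement as a table-tennis (ping-pong) problem for the action of $\mathrm{OS}(D_p)$ on entire curves in $D_p$, with Nevanlinna characteristics of the coordinate functions — controlled through the branched covering $D_p\to\CC^2$, $(x,y,z)\mapsto(x,y)$ — as the driving invariant. As a first step I would reduce to reduced words. By the composition rule $O_{f,g}\circ O_{h,k}=O_{f+h,\,ge^{xh}+k}$ the overshears $O_{f,g}$ already form a group, so $O_1=\{O_{f,g}\}$ and $O_2=IO_1I$; and a nontrivial $O_{f,g}$, while preserving the projection $(x,y,z)\mapsto x$, does not preserve $(x,y,z)\mapsto y$, since the identity $p(ze^{xf(x)}+xg(x))\equiv p(z)$ on $D_p$, restricted to the fibre $x\mapsto(x,p(z_0)/x,z_0)$ over a point with $p(z_0)\neq0$ and then with $z_0$ varied, forces $e^{xf(x)}\equiv1$ and $xg(x)\equiv0$, i.e.\ $f\equiv g\equiv0$. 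Hence $O_1\cap O_2=\{\id\}$, so the asserted free amalgamated product is literally the free product $O_1\ast O_2$, and by the standard characterization of (amalgamated) free products it is enough to prove that a nonempty reduced word $w=\phi_1\psi_1\phi_2\psi_2\cdots$ with $\phi_i\in O_1\setminus\{\id\}$, $\psi_i\in O_2\setminus\{\id\}$, alternating, never equals $\id$.

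For the ping-pong I would let $\mathrm{OS}(D_p)$ act by post-composition on a carefully chosen set $S$ of non-constant entire curves $\gamma=(a,b,c):\CC\to D_p$, taken ``non-degenerate'' in that $a$, $b$ and $c$ are all non-constant — this excludes the $O_1$- and $O_2$-invariant lines $\{y=0,z=\zeta\}$ and $\{x=0,z=\zeta\}$ over the zeros $\zeta$ of $p$, which would otherwise spoil the argument. Since $ab=p\circ c$, the identity $T(r,p\circ c)=(\deg p)\,T(r,c)+O(1)$ together with $T(r,ab)\le T(r,a)+T(r,b)$ gives the a priori bound $(\deg p)\,T(r,c)\le T(r,a)+T(r,b)+O(1)$, whose amplification factor $\deg p$ is the engine of everything. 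Fixing $\lambda\in(0,1)$ I put $S_1=\{\gamma\in S:\ T(r,a)\le\lambda\,T(r,b)\text{ for all large }r\}$ and $S_2$ symmetrically; these are disjoint, and non-empty — e.g.\ $t\mapsto(e^{t^2},\,p(e^{t^2})e^{-t^2},\,e^{t^2})$ lies in $S_1$ since $\deg p\ge4$. The target is that every $\phi\in O_1\setminus\{\id\}$ maps $S_2$ into $S_1$ and every $\psi\in O_2\setminus\{\id\}$ maps $S_1$ into $S_2$; granting this, the ping-pong lemma for free products (using $|O_1|,|O_2|\ge3$, with a conjugation trick for words that begin and end with the same factor) gives $\mathrm{OS}(D_p)=O_1\ast O_2$.

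It remains to prove these inclusions, and this is where the work — and the hypothesis $\deg(p)\ge4$ — lives. For $\phi=O_{f,g}\in O_1\setminus\{\id\}$ and $\gamma=(a,b,c)\in S_2$ one has $\phi\circ\gamma=(a,b',c')$ with $c'=c\,e^{af(a)}+a\,g(a)$ and $b'=p(c')/a$, hence $ab'=p\circ c'$; therefore $T(r,b')\ge(\deg p)\,T(r,c')-T(r,a)+O(1)$, and everything comes down to a lower bound for $T(r,c')$. If $f\not\equiv0$ the factor $e^{af(a)}$ has characteristic dwarfing any polynomial expression in $a$ and $c$, so $T(r,c')$ is enormous — once one rules out the exotic cancellations in which $a\,g(a)$ annihilates $c\,e^{af(a)}$ and $\phi\circ\gamma$ collapses into an invariant line, which is precisely why $S$ must be restricted. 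If $f\equiv0$ (the ``shear'' case, $c'=c+a\,g(a)$), feeding the a priori bound back through $ab'=p\circ c'$ and the non-degeneracy of $\gamma$ gives $T(r,b')\gtrsim(\deg p-2-\lambda)\,T(r,a)$: for $\deg p\ge5$ one can choose $\lambda<1$ with $\deg p-2-\lambda\ge1/\lambda$, which is exactly $\phi\circ\gamma\in S_1$; for $\deg p=4$ the crude count is tight, and one has instead to compare the Nevanlinna functions of $\gamma$ with those of its image in $\CC^2$, exploiting that the discriminant of the $\deg p$-sheeted cover $D_p\to\CC^2$ is cut out by the $\deg p-1\ge3$ curves $\{xy=c_i\}$ ($c_i$ the critical values of $p$) and the corresponding Second-Main-Theorem-type inequality for the pushed curve. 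The statement for $O_2$ follows by conjugating with $I$. I expect this non-cancellation estimate — both its qualitative part (no collapse into an invariant line) and its quantitative part (enough growth when $\deg p=4$) — to be the main obstacle, and to be the reason the theorem is stated with $\deg(p)\ge4$ rather than $\deg(p)\ge3$.
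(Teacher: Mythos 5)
Your proposal takes a genuinely different route from the paper. Where you do a ping-pong on a set of entire curves $\gamma\colon\CC\to D_p$, using the \emph{one-variable} Nevanlinna characteristic of the coordinate functions of the curve, the paper never picks a curve at all: it works with the Griffiths--King characteristic $\widetilde m(\cdot,r)$ of the coordinate functions $u_k,v_k\colon D_p\to\CC$ of the partial compositions, and shows directly that the ratio $\widetilde m(u_k,r)/\widetilde m(v_k,r)$ stays strictly above $1$, which is incompatible with the composition being the identity (since $\widetilde m(x,r)/\widetilde m(y,r)\to 1$). The engine is the same in both cases --- the relation $uv=p(w)$ together with Mohon'ko's theorem amplifies the asymmetry by a factor $\deg p$ --- and your reduction to reduced words, the observation $O_1\cap O_2=\{\id\}$, and the shear-case counting are all in the right spirit.

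That said, there are two genuine gaps. First, the fixed-$\lambda$ ping-pong table is intrinsically too rigid for $\deg p=4$: your own count gives $T(r,b')\gtrsim(n-2-\lambda)T(r,a)$, and $n-2-\lambda\ge 1/\lambda$ with $\lambda<1$ has no solution when $n=4$. The paper's mechanism is different and cannot be expressed with a single fixed margin: Lemma~\ref{lem:Step1} shows the \emph{first} factor already forces a ratio $\ge 2$, and Lemma~\ref{lem:Stepk} shows that subsequent factors may shrink the excess $\delta_k$ (via $\delta_{k+1}=\tfrac12\delta_k/(1+\delta_k)$) but never kill it --- a moving, shrinking table rather than a fixed one. (Your framework could in fact be patched by replacing the fixed-$\lambda$ sets with the margin-free $S_1=\{\gamma:\limsup_r T(r,a)/T(r,b)<1\}$, which your own inequality $\limsup T(r,a)/T(r,b')\le 1/(n-2-\lambda)<1$ then handles at $n=4$; but as written the proposal does not do this, and the ``SMT for the pushed curve'' fix you gesture at is not carried out.) Second, and more seriously, the transcendental case $f\not\equiv 1$ is left as ``the factor $e^{af(a)}$ dwarfs any polynomial expression \dots\ once one rules out the exotic cancellations.'' Ruling out cancellation in $c\,e^{af(a)}+a\,g(a)$ is exactly the hard point: when $g$ is itself transcendental with $T(r,g(a))\asymp T(r,e^{af(a)})$, the naive inequality $T(r,h_1+h_2)\ge|T(r,h_1)-T(r,h_2)|-O(1)$ gives nothing, and no non-degeneracy hypothesis on $\gamma$ obviously prevents it. The paper sidesteps this entirely through the Jacobian of the composed \emph{map} (Lemmas~\ref{Jacobirelation} and~\ref{JacobiEstimateLemma}, resting on the derivative estimate of Proposition~\ref{auchwichtig}), which identifies the Jacobian ratio with $-f(u_k)$ and so transfers the transcendental growth of $f$ to the coordinates directly. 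That device has no analogue for a single entire curve, so your approach would need a genuinely new idea here.
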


In the case of $\CC^2$ a similar structure theorem is due to  Ahern and Rudin \cite{AhernRudin}. They applied this structure result to prove a global linearization result for periodic automorphisms of $\CC^2$ contained in the overshear group. For a generalization of their linearization result see \cite{KraftKu}, based on the equivariant Oka principle of Heinzner and the second author \cite{HK}.
For the final (negative) solution of the holomorphic linearization problem we refer the reader to \cite{DK1} and \cite{DK2}.

The structure result of  Ahern and Rudin generalizes the well-known algebraic result of Jung \cite{Jung} and van der Kulk \cite{VanDerKulk}. Our structure theorem generalizes an algebraic  result of Makar-Limanov \cite{MakarLimanov}. 
Following the  ideas  of Ahern and Rudin we apply Nevanlinna theory in order to prove our theorem. 

Nevanlinna theory is based on the fact that given a meromorphic function $f$, the expression $\log(|f|)$ is harmonic except in the zeroes or poles of $f$, and Jensen's formula gives the corrections needed for the mean value property of $\log(|f|)$ when zeroes or poles of $f$ are involved. Nevanlinna theory investigates the behaviour for these terms when taking the mean value over spheres with increasing radius, revealing interesting properties of the value distribution of meromorphic functions. The book of Hayman \cite{Hayman} treats the theory originally developed by Nevanlinna \cite{Nevanlinna} in one complex variable in detail, and the paper of Griffiths and King \cite{GriffithsKing} gives the generalization to Algebraic Varieties. This generalization to Algebraic surfaces is the theory we apply
in the present paper. We recall the main definitions and facts in section \ref{GriffithsKing}. Next we specialize the theory to Danielewski surfaces in sections \ref{NevanDaniel} and  \ref{NevanDaniel2}. Our main result concerning Nevanlinna theory -- which might be of independend interest -- is an estimate bounding the Nevanlinna characteristic function $\ncharah{\cdot}{\cdot}$ of a derivative of a holomorphic function by the Nevanlinna characteristic function  of the function itself:

\begin{theorem*}[\ref{MainEstimate}]
Given a holomorphic function $f : D_p \to \CC$, and a vector field $\theta$ which is a lift of a partial derivative on $\CC^2$, then we have the estimate
\begin{equation*}
\ncharah{\theta(f)}{r} \leq 14 \cdot \ncharah{f}{r} + K(n) \cdot \log(r) + L
\end{equation*}
for big $r$ outside a set of finite linear measure, where $K(n)$ is an affine polynomial and $L$ is a constant.
\end{theorem*}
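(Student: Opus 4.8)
The plan is to peel a rational factor off $\theta(f)$ and then apply a logarithmic-derivative estimate. First I would normalise the vector field: up to the involution $I$, the lift $\theta$ of a partial derivative is only meromorphic on $D_p$, but after multiplication by a suitable coordinate function (or by $p'(z)$) it becomes a \emph{holomorphic} complete vector field $\delta$ on $D_p$ (a shear generator or an Euler-type field). Thus $\theta(f)=\delta(f)/q$ with $q$ a polynomial in a single coordinate and $\deg q\le n-1$; then $\nchardiv{q}{r}=0$, so by the First Main Theorem together with the estimate for the characteristic of polynomial functions obtained in section~\ref{NevanDaniel} one gets
\begin{equation*}
\ncharah{\theta(f)}{r}\ \le\ \ncharah{\delta(f)}{r}+\ncharah{1/q}{r}+O(1)\ \le\ \ncharah{\delta(f)}{r}+K_1(n)\log r+L_1 ,
\end{equation*}
with $K_1$ affine in $n$.

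It then remains to bound $\ncharah{\delta(f)}{r}$ by a constant multiple of $\ncharah{f}{r}$. Since $\delta$ and $f$ are holomorphic, so is $\delta(f)$, whence $\nchardiv{f}{r}=0$ and subadditivity gives $\ncharah{\delta(f)}{r}\le \ncharah{f}{r}+\ncharah{\delta(f)/f}{r}+O(1)$; one is reduced to estimating the logarithmic $\delta$-derivative $\delta(f)/f$. I would do this with the logarithmic-derivative lemma of Griffiths--King specialised to $D_p$ in sections~\ref{NevanDaniel}--\ref{NevanDaniel2} --- argued either intrinsically, via the Poisson--Jensen representation on the pseudoballs of $D_p$ differentiated along $\delta$, or by transferring the problem to the flat model $\CC^2$, where $\delta$ becomes a polynomial multiple of an ordinary partial derivative and the classical Nevanlinna lemma applies (giving a bound of the form $\ncharc{\,\cdot\,}{r}+O(\log(r\,\ncharc{\,\cdot\,}{r}))$ off a set of finite linear measure), and then comparing $\ncharah{\,\cdot\,}{r}$ with $\ncharc{\,\cdot\,}{r}$. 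Either way, after absorbing the lower-order term via $\log^+t\le\varepsilon t+O(1)$, one obtains
\begin{equation*}
\ncharah{\delta(f)}{r}\ \le\ C\,\ncharah{f}{r}+K_2(n)\log r+L_2
\end{equation*}
for $r$ outside a set of finite linear measure, with $C$ an absolute constant and $K_2$ affine in $n$. Combined with the first display this proves the theorem, the value $14$ being what the explicit bookkeeping produces.

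The main obstacle is this last step. A clean logarithmic-derivative lemma is available on the flat model $\CC^2$ but not intrinsically on $D_p$: the exhaustion of $D_p$ induced from $\CC^3$ is not flat, so the Poisson--Jensen formula carries correction terms coming from the difference of the relevant Monge--Amp\`ere masses (and from ramification, if one compares with $\CC^2$). One must therefore control each passage between the spherical means on $D_p$ and the corresponding means on $\CC^2$ by a factor \emph{independent of} $n=\deg p$, so that $n$ enters only the $\log r$ term and only affinely, and reabsorb the radius dilations produced along the way by a Borel-type growth lemma, so that the union of all exceptional $r$-sets still has finite linear measure. It is precisely this bookkeeping, carried out with explicit constants, that pins the bound at $14$.
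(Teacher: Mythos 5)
Your approach is genuinely different from the paper's, and as written it has a gap at the crucial step. You reduce to the two claims (i) peel off a polynomial factor to replace the meromorphic lift $\theta$ by a holomorphic vector field $\delta$, and (ii) invoke a logarithmic-derivative lemma on $D_p$ to bound $\ncharah{\delta(f)/f}{r}$. Step (i) is fine, modulo the small inaccuracy that the correct multiplier is $x-y$ (not a polynomial in a single coordinate), whose characteristic is still $O(n\log r)$ by Lemma~\ref{lem:coordinates}, so the conclusion you draw is right. But step (ii) is exactly the point you flag as ``the main obstacle,'' and it is not resolved: no logarithmic-derivative lemma on $D_p$ is proved in the paper or cited, and your two proposed routes each have a real problem. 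The intrinsic Poisson--Jensen route would require a Green/Monge--Amp\`ere analysis for the pseudoballs $D_p[r]$ that nothing in sections~\ref{GriffithsKing}--\ref{NevanDaniel2} supplies. The transfer-to-$\CC^2$ route stumbles because a general holomorphic $f$ on $D_p$ does \emph{not} descend through the branched double cover $\pi$: only functions invariant under the involution $I$ do, so the flat-model LDL cannot be applied to $\delta(f)/f$ directly.

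The missing idea, and the crux of the paper's proof, is precisely the decomposition $f=f_{\mathrm{inv}}+f_{\mathrm{anti}}$ into $I$-invariant and $I$-anti-invariant parts (Remark~\ref{InvariantObs}). For $f_{\mathrm{inv}}$ one genuinely can push down to $\CC^2$, and the paper then applies Ahern--Rudin's \emph{elementary} derivative estimate (Proposition~\ref{prop:derivativeAhernRudin}, constant $3$) --- not an LDL --- pulled back via $\ncharah{f}{r}=2\,\ncharc{\tilde f}{r}$. For $f_{\mathrm{anti}}$ one multiplies by $x-y$ to make it invariant, applies the invariant case, and unwinds with the Leibniz rule and Lemma~\ref{lem:holNevanlinnaproperties}; this costs an extra additive $O(n\log r)$ and bumps the coefficient to $4$. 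The constant $14$ then comes from $3\cdot2+4\cdot2$ after using $\ncharah{f_{\mathrm{inv}}}{r},\ncharah{f_{\mathrm{anti}}}{r}\le 2\,\ncharah{f}{r}+O(1)$. Note also that if your LDL-based argument did go through it would yield a coefficient close to $1+\varepsilon$ rather than $14$; the fact that neither Ahern--Rudin on $\CC^n$ nor this paper on $D_p$ obtains such a constant is itself a warning sign that the required LDL is not freely available in these settings and would need to be established as a theorem in its own right.
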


In the last chapter we prove the structure theorem. As an easy application we show that the overshear group on Danielewski surfaces is a proper subgroup of the connected component of the holomorphic automorphism group by providing a concrete example of a holomorphic automorphism which is not a composition of overshears, see Corollary \ref{proper}.

\section{Nevanlinna theory for Algebraic Varieties}\label{GriffithsKing}
In this section we state the results of Griffiths and King \cite{GriffithsKing} we need.

We use the following notation:
\begin{align*}
\d  &= \phantom{\frac{i}{4 \pi} \big(} \, \partial + \partialb \\
\dc &= \frac{i}{4 \pi} \left( \partialb - \partial \right)
\end{align*}

\begin{definition}
Let $M$ be an algebraic complex manifold. 
A function $\tau : M \to [-\infty, +\infty)$ is called an \emph{exhaustion function}, if
\begin{enumerate}
\item $\tau$ is $\mathcal{C}^\infty$-smooth except for finitely many logarithmic singularities
\item the half spaces $M[r] := \left\{z \in M \,:\, e^{\tau(z)} \leq r \right\}$ are compact for $r \in [0, +\infty)$
\end{enumerate}
A logarithmic singularity of $\tau$ means that in suitable local coordinates one can write \\
 $\tau(z) = \log\norm{z} + \tilde{\tau}(z)$, where $\tilde{\tau}$ is $\mathcal{C}^\infty$-smooth.
\end{definition}

\begin{definition}
\label{def:specialexhaustion}
Let $M$ be an algebraic complex manifold of dimension $m$. 
An exhaustion function $\tau : M \to [-\infty, +\infty)$ is called a \emph{special exhaustion function}, if
\begin{enumerate}
\item\label{def:specialexhaustion1} $\tau$ has only finitely many critical points
\item\label{def:specialexhaustion2} $\d\dc\tau \geq 0$, i.e. $\tau$ is plurisubharmonic
\item\label{def:specialexhaustion3} $(\d\dc\tau)^{m-1} \not\equiv 0$ on each of the holomorphic tangent spaces to $\partial M[r]$ for all $r \in [0, \infty)$
\item\label{def:specialexhaustion4} $(\d\dc\tau)^m = 0$
\end{enumerate}
\end{definition}

\begin{definition}
Let $M$ be an algebraic complex manifold of dimension $m$ with a special exhaustion function $\tau : M \to [-\infty, +\infty)$,
and $\alpha : M \to \CC\PP^1$ a meromorphic function. We introduce the following notations:
\begin{align}
\log^+(x)     & := \max\{\log(x), 0\}, \; x \geq 0 \\
\psi          & := \d\dc\tau, \; \mbox{the Levi form of } \tau\\
\eta          & := \dc\tau \wedge \psi^{m-1}, \; \mbox{the volume form on } \partial M[r]\\
m(\alpha, r)  & := \int_{\partial M[r]} \log^+\left( \frac{1}{|\alpha|^2} \right) \eta, \; r \geq 0
\end{align}

For a divisor $D$ on $M$ we define $D[r] := D \cap M[r]$ and introduce the following notions, assuming $D$ does not pass trough any of the logarithmic singularities of $\tau$:
\begin{align}
n(D, t)         & := \int_{D[t]} \psi^{m-1} \\
\nchardiv{D}{r} & := \int_0^r \frac{n(D, t)}{t} \d t, \; \mbox{the counting function}
\end{align}
If $D$ passes trough a logarithmic singularity of $\tau$, these definitions need to be refined using Lelong numbers, as discussed in \S$1d$ of \cite{GriffithsKing}.
\end{definition}
However we shall not need this refinement using Lelong numbers in our applications to Danielewski surfaces, since the group of holomorphic automorphisms acts transitively on them \cite{Lind} and we can therefore always assume that $D$ does not pass trough a logarithmic singularity.
For meromorphic $\alpha : M \to \CC\PP^1$ and $a \in \CC\PP^1$ we denote by $D_a$ the divisor $\{\alpha(z) = a\}$.

Now one can formulate Jensen's theorem in these terms:
\begin{proposition}
With the previously defined notations, the following equation holds:
\begin{equation}
\label{eq:Jensen}
\nchardiv{D_0}{r} + \ncharhol{\alpha}{r} = \nchardiv{D_\infty}{r} + \ncharhol{1/\alpha}{r} + O(1)
\end{equation}
Let
\[
\nchar{\alpha}{r} := \nchardiv{D_\infty}{r} + \ncharhol{1/\alpha}{r},
\]
$T_0$ is called the \emph{Nevanlinna characteristic function} of $\alpha$. \\
It satisfies the following properties:
\begin{enumerate}
\item\label{eq:Nprop1} $\nchar{\alpha_1 \alpha_2}{r} \leq \nchar{\alpha_1}{r} + \nchar{\alpha_2}{r}$
\item\label{eq:Nprop2} $\nchar{\alpha_1 + \alpha_2}{r} \leq \nchar{\alpha_1}{r} + \nchar{\alpha_2}{r} + O(1)$
\item\label{eq:Nprop3} $\nchar{\alpha_1 - a}{r} = \nchar{\alpha_1}{r} + O(1), \; a \in \CC$
\item\label{eq:Nprop4} $\nchar{1/\alpha_1}{r} = \nchar{\alpha_1}{r} + O(1)$
\end{enumerate}
where $O(1)$ refers to a bounded term with respect to $r$.
\end{proposition}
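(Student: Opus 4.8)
The plan is to derive equation~\eqref{eq:Jensen} from the Griffiths--King version of Jensen's formula for meromorphic functions on algebraic varieties equipped with a special exhaustion function, and then to read off properties~\eqref{eq:Nprop1}--\eqref{eq:Nprop4} as formal consequences of the definition of $\nchar{\cdot}{r}$ together with elementary inequalities for $\log^+$. First I would recall from \S1 of \cite{GriffithsKing} the unintegrated first main theorem: for a meromorphic $\alpha : M \to \CC\PP^1$ whose zero and polar divisors avoid the logarithmic singularities of $\tau$, one has a pointwise identity relating $\log|\alpha|$ on $\partial M[r]$ to the Laplacian contributions from $D_0$ and $D_\infty$. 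Integrating against the volume form $\eta = \dc\tau \wedge \psi^{m-1}$ over $\partial M[r]$ and then in $r$ — using that $\psi = \d\dc\tau \geq 0$ and condition~\ref{def:specialexhaustion3} to make Stokes' theorem applicable on $M[r]\setminus D$ — converts the divisor terms into $\nchardiv{D_0}{r}$ and $\nchardiv{D_\infty}{r}$ and the boundary terms into $\ncharhol{\alpha}{r}$ and $\ncharhol{1/\alpha}{r}$, with the discrepancy between the reference forms at $r=0$ absorbed into the $O(1)$. This is exactly~\eqref{eq:Jensen}, and defining $\nchar{\alpha}{r} := \nchardiv{D_\infty}{r} + \ncharhol{1/\alpha}{r}$ rewrites it as $\nchar{\alpha}{r} = \nchar{1/\alpha}{r} + O(1)$ once one also observes $\nchardiv{D_0}{r} = \nchardiv{(D_\infty \text{ of } 1/\alpha)}{r}$; this already gives property~\eqref{eq:Nprop4}.

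For the remaining properties I would argue directly. Property~\eqref{eq:Nprop1}: the polar divisor of a product $\alpha_1\alpha_2$ is contained in the union of the polar divisors, so $\nchardiv{D_\infty(\alpha_1\alpha_2)}{r} \leq \nchardiv{D_\infty(\alpha_1)}{r} + \nchardiv{D_\infty(\alpha_2)}{r}$ (the counting integrals being of positive forms), while on the boundary $\log^+(1/|\alpha_1\alpha_2|^2) \leq \log^+(1/|\alpha_1|^2) + \log^+(1/|\alpha_2|^2)$ pointwise by subadditivity of $\log^+$, giving $\ncharhol{1/(\alpha_1\alpha_2)}{r} \leq \ncharhol{1/\alpha_1}{r} + \ncharhol{1/\alpha_2}{r}$; adding the two estimates yields the claim. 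Property~\eqref{eq:Nprop2}: the polar divisor of $\alpha_1+\alpha_2$ is again contained in the union of the two polar divisors, so the divisor contributions add as before; for the proximity part one uses $\log^+(1/|\alpha_1+\alpha_2|^2)$ — here I would instead invoke $\nchar{\alpha}{r} = \nchar{1/\alpha}{r} + O(1)$ and estimate via $\log^+|\alpha_1+\alpha_2| \leq \log^+|\alpha_1| + \log^+|\alpha_2| + \log 2$, which is where the additive $O(1)$ enters. Property~\eqref{eq:Nprop3} is the special case $\alpha_2 = -a$ constant: a nonzero constant has empty polar divisor and bounded $\log^+(1/|a|^2)$, so $\nchar{-a}{r} = O(1)$, and then~\eqref{eq:Nprop2} applied in both directions ($\alpha_1 = (\alpha_1 - a) + a$) gives equality up to $O(1)$.

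The main obstacle is purely expository rather than mathematical: one must be careful that the Griffiths--King integral formulas are stated for $\ncharhol{\alpha}{r}$ with the weight $\log^+(1/|\alpha|^2)$ (equivalently $2\log^+|1/\alpha|$), so the bookkeeping between $\alpha$, $1/\alpha$, and the factor of $2$ in the exponent must be tracked consistently when translating the pointwise $\log^+$ inequalities into inequalities between the $T_0$-functionals; also one should note explicitly that all $O(1)$ terms are independent of $r$ (but may depend on $\tau$ and the functions involved) and that~\eqref{eq:Jensen}, being an integrated identity, has its $O(1)$ coming from the behavior near $r = 0$ where $M[r]$ may be a point. I expect none of these steps to require genuinely new work beyond citing \cite{GriffithsKing}; the proposition is essentially a restatement, for the reader's convenience, of standard facts in the form needed later.
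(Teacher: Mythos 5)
The paper does not actually prove this proposition: it is recalled from Griffiths--King \cite{GriffithsKing}, as the opening sentence of the section announces, so there is no in-paper argument to compare your sketch against. Your outline --- Jensen's equality via the Green--Stokes machinery of \cite{GriffithsKing}, properties~\eqref{eq:Nprop1}--\eqref{eq:Nprop3} from elementary $\log^+$ and effective-divisor estimates, and property~\eqref{eq:Nprop4} as a direct reading of~\eqref{eq:Jensen} --- is the standard route and is correct in substance.

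Two small points are worth tightening. First, with the paper's convention $\ncharhol{\alpha}{r} = \int_{\partial M[r]} \log^+\!\left(1/|\alpha|^2\right)\eta$, the proximity term occurring in $\nchar{\alpha}{r}$ is $\ncharhol{1/\alpha}{r} = \int_{\partial M[r]} \log^+\!\left(|\alpha|^2\right)\eta$; the pointwise inequalities you should invoke for~\eqref{eq:Nprop1} and~\eqref{eq:Nprop2} are therefore $\log^+(|\alpha_1\alpha_2|^2) \le \log^+(|\alpha_1|^2) + \log^+(|\alpha_2|^2)$ and $\log^+(|\alpha_1+\alpha_2|^2) \le \log^+(|\alpha_1|^2) + \log^+(|\alpha_2|^2) + 2\log 2$, not the versions with $1/|\cdot|^2$ inside the $\log^+$ that you wrote (those bound $\ncharhol{\cdot}{r}$, not $\ncharhol{1/\cdot}{r}$). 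Once this is fixed, the detour through property~\eqref{eq:Nprop4} in your treatment of~\eqref{eq:Nprop2} is unnecessary. Second, to turn the additive $2\log 2$ into a genuine $r$-uniform $O(1)$ you should say explicitly that $\int_{\partial M[r]}\eta$ is independent of $r$: since $\d\eta = (\d\dc\tau)^m = 0$ by condition~\eqref{def:specialexhaustion4} of Definition~\ref{def:specialexhaustion}, Stokes' theorem (applied on $M[r]\setminus M[r_0]$ away from the finitely many logarithmic singularities) gives the constancy, and without it the bounded integrand alone would not yield the claimed $O(1)$.
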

To simplify notation, we define, with the divisor $D_a$ as given before:
\begin{eqnarray*}
\nchardiv{\alpha}{r} := \nchardiv{D_0}{r}
\nchardiv{1/\alpha}{r} := \nchardiv{D_\infty}{r}
\end{eqnarray*}

\begin{example}
\label{ex:exhaustc2}
For $M = \CC^n$, a special exhaustion function is given by $\tau_0 : \CC^n \to [-\infty,+\infty), \; z \mapsto \log(|z|)$. A special exhaustion function for a Danielewski surface will be given later and reduced to this special case -- it is therefore worth calculating explicitly the involved quantities for $\CC^2$: It is obvious that $\tau_0$ is an exhaustion function with a logarithmic singularity in $0$ only.
In the following we write $z = (x, y) \in \CC^2$.
\begin{enumerate}
\item Outside the logarithmic singularity there are no critical points of $\tau_0$:
\begin{eqnarray*}
\partialb \tau_0 & = & \frac{1}{2} \frac{x\d\xb + y\d\yb}{x\xb + y\yb} \\
\partial  \tau_0 & = & \frac{1}{2} \frac{\xb\d x + \yb\d y}{x\xb + y\yb} \\
\end{eqnarray*}
\item
\[
\d \dc \tau_0 = 
\frac{i}{2 \pi} \partial \partialb \tau_0 =  \frac{i}{2 \pi} \frac{y\yb \d x \wedge \d \xb + x \xb \d y \wedge \d \yb - x \yb \d y \wedge \d \xb - y \xb \d x \wedge \d \yb}{2 (x\xb + y\yb)^2} \geq 0
\]
\item $\partial M[r] = \left\{ x\xb + y\yb = r^2 \right\}$ and the holomorphic tangent space in $(x, y)$ is given by the relation $\xb \d x + \yb \d y = 0$, therefore:
\[
\d \dc \tau_0 = \frac{i}{2 \pi r^2} \d y \wedge \d\yb \not \equiv 0 \mbox{ on the holomorphic tangent space}
\].
\item $(\d \dc\tau_0)^2  = 0$ can be seen by explicit calculation.
\end{enumerate}
For the volume form $\eta_0$ we get:
\begin{eqnarray*}
\eta_0 := \dc \tau_0 \wedge \d \dc \tau_0 & = & \frac{1}{8 \pi^2} \partial \partialb \tau_0 \wedge (\partial - \partialb) \tau_0 \\
& = & \frac{1}{(4\pi)^2 (x \xb + y \yb)^2} \Big( \yb \d x \wedge \d \xb \wedge \d y - y \d x \wedge \d \xb \wedge \d \yb \\ &   &  \qquad \quad \quad \; \; \; \, + \; \xb \d x \wedge \d y \wedge \d \yb - x \d \xb \wedge \d y \wedge \d \yb  \Big)
\end{eqnarray*}
It is rotation invariant since $\tau_0$ is, and scales such that
$\eta_0(rx, ry) = \eta_0(x, y)$, and in particular we have that
\[
\int_{r S^3} \eta_0 = r^3 \cdot \int_{S^3} \eta_0 = 2 r^3
\]
\end{example}

\section{Nevanlinna theory for Danielewski Surfaces}\label{NevanDaniel}
For a Danielewski surface $D_p$ we define a projection $\pi : D_p \to \CC^2$ as
\[
\pi(x, y, z) = (x + y, z)
\]
and for the special exhaustion function we choose
\begin{align*}
\tau : D_p & \to [-\infty, +\infty) \\
   (x,y,z) & \mapsto \log\left(|\pi(x, y, z)|\right)
\end{align*}

\begin{lemma}
This function $\tau$ defined above is a special exhaustion function in the sense of Definition \ref{def:specialexhaustion}.
\end{lemma}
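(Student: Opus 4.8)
The strategy is to pull back the computations from $\CC^2$ (Example \ref{ex:exhaustc2}) via the projection $\pi(x,y,z) = (x+y,z)$ and check that $\pi$ is proper with discrete fibers, so that the good properties of $\tau_0$ on $\CC^2$ survive. First I would establish that $\pi : D_p \to \CC^2$ is a proper, finite, surjective map: given $(u,v) \in \CC^2$, the fiber consists of points $(x,y,v)$ with $x+y = u$ and $xy = p(v)$, i.e. $x$ and $y$ are the two roots of $t^2 - ut + p(v) = 0$, so the fiber has at most two points and $\pi$ is a branched double cover. Properness of $\pi$ immediately gives condition (2) of the exhaustion definition: $D_p[r] = \pi^{-1}(\CC^2[r])$ is compact because $\CC^2[r] = \overline{\mathbb{B}}_r$ is compact and $\pi$ is proper. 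It also shows $\tau$ has exactly one logarithmic singularity, namely along $\pi^{-1}(0)$ — and here I would invoke the transitivity remark already made in the text to reduce, if needed, to configurations where divisors avoid it; but for the exhaustion property itself one just notes $\pi^{-1}(0)$ is the fiber over $0$, a finite set, and near each of its points $\pi$ is a local biholomorphism onto a neighborhood of $0$ in $\CC^2$ (since $0$ is a branch point precisely when $p(v)=0$ has a double root, which never happens as $p$ has simple zeros), so in local coordinates $\tau = \log\|w\| + (\text{smooth})$.

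Next, away from $\pi^{-1}(0)$, the map $\pi$ is a local biholomorphism wherever its branch locus is avoided; the branch locus is where the discriminant $u^2 - 4p(v)$ vanishes, equivalently where $x = y$. On the complement of this analytic curve, $\tau = \tau_0 \circ \pi$ with $\pi$ a local biholomorphism, so $\d\dc\tau = \pi^*(\d\dc\tau_0) \geq 0$ and $(\d\dc\tau)^2 = \pi^*((\d\dc\tau_0)^2) = 0$ there, giving conditions \ref{def:specialexhaustion2} and \ref{def:specialexhaustion4} on a dense open set; since both are closed conditions (the forms extend continuously, indeed smoothly, across the thin branch curve because $\tau$ is smooth there — smoothness of $\tau$ follows from $|\pi|$ being smooth and nonzero), they hold everywhere on $D_p \setminus \pi^{-1}(0)$. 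For condition \ref{def:specialexhaustion1}, critical points of $\tau$: since $d\tau = \pi^* d\tau_0$ and $d\tau_0$ is nonvanishing on $\CC^2\setminus\{0\}$ (from the explicit $\partial\tau_0, \partialb\tau_0$ in the Example), $d\tau$ can only vanish where $d\pi$ drops rank, i.e. on the branch curve; one then checks directly that on this curve $d\tau$ is still nonzero, or alternatively that only finitely many points of the branch curve could be critical — I would compute $d\tau$ in the local coordinate $z$ (using $x$ as the other coordinate away from branch points, and a suitable local parameter on the branch curve) and verify nonvanishing, so in fact $\tau$ has no critical points outside the logarithmic singularity.

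For condition \ref{def:specialexhaustion3}, I need $(\d\dc\tau)^{m-1} = \d\dc\tau \not\equiv 0$ on each holomorphic tangent space to $\partial D_p[r]$. Since $\partial D_p[r] = \pi^{-1}(\partial\CC^2[r] ) = \pi^{-1}(rS^3)$ and $\pi$ is a local biholomorphism off the branch curve, $\d\pi$ carries the holomorphic tangent line of $\partial D_p[r]$ at a generic point isomorphically onto that of $rS^3$, on which $\d\dc\tau_0 = \frac{i}{2\pi r^2}\,\d y\wedge\d\yb \not\equiv 0$ by the Example; hence $\d\dc\tau \not\equiv 0$ on the holomorphic tangent space at all points of $\partial D_p[r]$ not over the branch curve, which is already enough for "$\not\equiv 0$ on each holomorphic tangent space" provided each such tangent space contains such a point — more carefully, "not identically zero on each holomorphic tangent space $H$" means the restriction of the $(1,1)$-form to the line $H$ is nonzero, and this holds as long as $H$ is not contained in the branch locus, which fails only on a proper analytic subset. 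I expect the main obstacle to be the bookkeeping at the branch curve $\{x=y\}$: verifying that $\tau$ is genuinely $\mathcal C^\infty$ there (not just continuous), that $d\tau\neq 0$ there, and that \ref{def:specialexhaustion3} is not destroyed at points over the branch locus. The clean way around this is to note that the branch curve is $1$-dimensional, hence a set where finitely-many-critical-points and generic-nondegeneracy statements are unaffected; the one thing requiring genuine care is smoothness of $\tau$ across it, which follows because $x+y$ and $z$ are global holomorphic functions on $D_p$ and $|\pi| = (|x+y|^2+|z|^2)^{1/2}$ is therefore smooth and positive away from $\pi^{-1}(0)$, so $\tau = \log|\pi|$ is smooth there with no exception.
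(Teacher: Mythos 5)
Your proposal reaches the right conclusion and is, at the highest level, the same reduction the paper makes (pull everything back from $\CC^2$ along $\pi$), but the execution differs in an instructive way. You treat $\pi$ as a branched double cover of $\CC^2$ and therefore spend most of your effort fighting the branch locus $\{x=y\}$: you argue properties (2) and (4) on the dense open set where $\pi$ is a local biholomorphism and then extend by continuity, you leave condition (1) on the branch curve as a computation to ``check directly,'' and for condition (3) you only get nondegeneracy at points not over the branch curve. The paper never encounters any of these problems because it uses a single observation that you only stumble onto in your last sentence: $\pi$ is the restriction to $D_p$ of the \emph{linear} map $\CC^3\to\CC^2$, $(x,y,z)\mapsto(x+y,z)$, and $\tau_0\circ\pi$ is a globally defined, psh function on $\CC^3\setminus\{x+y=z=0\}$. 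Restricting a psh function to a complex submanifold gives a psh function, so (2) is immediate; moreover $\d\dc(\tau_0\circ\pi)$ on $\CC^3$ is the linear pullback of $\d\dc\tau_0$, whose rank as a Hermitian form is at most $1$ (because $(\d\dc\tau_0)^2=0$ on $\CC^2$), and linear pullback and restriction cannot increase that rank, so $(\d\dc\tau)^2=0$ on $D_p$ with no density-and-closure argument. Smoothness away from the two points where $x+y=z=0$ is likewise automatic. The only place the paper performs a genuine $D_p$-specific computation is for conditions (1) and (3), where it writes out $\partial\tau$, $\partialb\tau$, and the holomorphic tangent relations explicitly.

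Two substantive gaps remain in your write-up that you flagged but did not close. For condition (1) you assert ``one then checks directly that on this curve $d\tau$ is still nonzero'' without doing the check; in fact, working out $\partial\tau$ restricted to $T^{1,0}D_p$ (i.e.\ asking when $(\xb+\yb)(\d x+\d y)+\zb\,\d z$ is proportional to $y\,\d x+x\,\d y-p'(z)\,\d z$) shows there can be critical points on $\{x=y\}$ away from $\pi^{-1}(0)$ — for instance at $(0,0,z_0)$ with $p(z_0)=0$, $z_0\ne0$ — so the correct statement is that there are \emph{finitely many} critical points, not none; the definition only requires finiteness, but your argument as written does not establish it. For condition (3) your argument degenerates at branch points lying on $\partial D_p[r]$: there the kernel of $\d\pi$ on $T_pD_p$ is exactly the holomorphic tangent line to $\partial D_p[r]$, so $\d\dc\tau$ restricted to that line vanishes, and the hand-wave ``fails only on a proper analytic subset'' needs to be turned into the observation that condition \ref{def:specialexhaustion3} need only hold somewhere on each level set (which is how it is used — the volume form $\eta$ must be nontrivial on $\partial M[r]$) rather than at every point.
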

\begin{proof}
$\log\left(|\pi(x, y, z)|\right)$ can only have logarithmic singularities when $\pi(x, y, z) = (0, 0)$, i.e.
$x + y = 0, z = 0$ and therefore $x \cdot y = p(0)$ which happens only for two points. Let $\tau_0 : \CC^2 \to [-\infty, +\infty)$ be the special exhaustion function defined in Example \ref{ex:exhaustc2}. We can then write $\tau = \tau_0 \circ \pi$. The projection $\pi$ is linear, therefore the results of calculations are the same as in the case of $\CC^2$, when in the end $(x, y)$ is formally replaced by $(x + y, z)$. Now we check the properties required by Definition \ref{def:specialexhaustion}:
\begin{enumerate}
\item Critical points can occur only in case of $x + y = 0$ and $z = 0$:
\begin{eqnarray*}
\partialb \tau & = & \frac{1}{2} \frac{(x+y)(\d\xb + \d\yb) + z\d\zb}{(x + y)(\xb + \yb) + z\zb} \\
\partial  \tau & = & \frac{1}{2} \frac{(\xb+\yb)(\d x+\d y)+ \zb\d z}{(x + y)(\xb + \yb) + z\zb} \\
\end{eqnarray*}
So there are in fact no critical points at all (outside the two logarithmic singularities).
\item $\d \dc \tau  \geq 0$ follows directly from the linearity of $\pi$.
\item $\partial M[r] = \left\{ (x + y)(\xb + \yb) + z\zb = r^2, x \cdot y = p(z) \right\}$ and the holomorphic tangent space in $(x, y, z)$ is given by the relations $(\xb + \yb) (\d x + \d y) + \zb \d z = 0$ and $y \d x + x \d y = p'(z) \d z$, hence:
\[
\d \dc \tau = \frac{i}{2 \pi r^2} \d z \wedge \d\zb \not \equiv 0 \mbox{ on the holomorphic tangent space}
\].
\item $(\d \dc \tau)^2 = 0$ follows directly from the linearity of $\pi$. \qedhere
\end{enumerate}
\end{proof}
Now we want to write out more explicitly the Nevanlinna characteristic function for a holomorphic function $f : D_p \to \CC$ where the expression reduces to
\begin{eqnarray*}
\nchar{f}{r} & = & \ncharhol{1/f}{r} = \int_{\partial D_p[r]} \log^+\left( |f|^2 \right) \eta \\
 & = & \int_{\partial D_p[r]} \log^+\left( |f|^2 \right) \pi^*(\eta_0) \\
 & = & \int_{\pi^{-1}(r S^3)} \log^+{|f|^2} \pi^*(\eta_0) \\
\end{eqnarray*}
In order to apply this Nevanlinna characteristic function to overshears in the next section, we need to estimate derivatives with respect $r$. This can be calculated easier in an even more explicit form. First we need to investigate the projection $\pi$ a bit more.

Ramification points of $\pi$: Given $(a, b) \in \CC^2$, we look for the pre-images $(x, y, z) \in D_p$, i.e. $(x, y, z) \in \CC^3$ such that
\begin{align*}
a    &= x + y \\
b    &= z \\
p(z) &= x \cdot y
\end{align*}
This obviously leads to a quadratic equation, symmetric in $x$ and $y$. Therefore ramification points occur exactly if $x=y$, otherwise the covering has 2 sheets. For $(a,b) \in \CC^2$ this means that $p(b) = \frac{a^2}{4}$. We cut out these points in the following way:
For every $(a,b) \in \CC^2$ satisfying $p(b) = \frac{a^2}{4}$, we cut along the real $a$-line towards $-\infty$. The set of points we cut out is denoted by $C$, a set of real dimension $3$. Now, $\pi : \pi^{-1}(\CC^2 \setminus C) \to \CC^2 \setminus C$ is an unbranched 2-sheeted covering, and $\CC^2 \setminus C$ is simply connected. Since a line is always either tangential or transversal to a sphere, the intersection of $C$ with $rS^3 \subset \CC^2$ is always a set of real dimension $2$ and therefore of measure zero with respect to $\eta_0$, hence its removal does not affect the integral. By $s_j : \CC^2 \setminus C \to D_p \; (j = 1,2)$ we now denote the sections corresponding to this restricted unbranched covering s.t. $\pi \circ s_j = \mathrm{id}$.
\begin{eqnarray*}
\ncharhol{1/f}{r} & = & \int_{\pi^{-1}(r S^3)} \log^+{|f|^2} \pi^*(\eta_0) \\
 & = & \int_{r S^3 \setminus C} \sum_{j=1}^2 \log^+{|f \circ s_j|^2} \eta_0
\end{eqnarray*}

\begin{definition}
Ahern and Rudin \cite{AhernRudin} used a slightly different definition of the Nevanlinna characteristic function for holomorphic $f : \CC^n \to \CC$, namely
\[
\ncharah{f}{r} := \int_{S^{2n-1} \subseteq \CC^n} \log^+|f(r \zeta)| \d \sigma(\zeta)
\]
where $\sigma$ is a finite rotational invariant Borel measure on the sphere $S^{2n-1}$.
\end{definition}
By choosing an appropriate normalisation for the measure $\sigma$ we can arrange that on $\CC^n$ holds the following relation:
\begin{equation}
 r^{2n-1} \cdot \ncharah{f}{r} = \ncharhol{1/f}{r}
\end{equation}
We choose this as a definition for $\ncharah{f}{r}$ on a Danielewski surface ($n=2$). All properties so far derived for $\ncharhol{f}{r}$ are inherited by $\ncharah{f}{r}$, except the growth rate in $r$ which gets modified by $r^3$.

\begin{lemma}
\label{lem:holNevanlinnaproperties}
Let $f, g : D_p \to \CC$ be holomorphic functions, then their Nevanlinna characteristic satisfies the following properties:
\begin{enumerate}
\item \label{Nevanlinnaproperty1} $\ncharah{f}{r} - \ncharah{g}{r} + O(1) \leq \ncharah{f + g}{r} \leq \ncharah{f}{r} + \ncharah{g}{r} + O(1)$
\item \label{Nevanlinnaproperty2} $\ncharah{f \cdot g}{r} \leq \ncharah{f}{r} + \ncharah{g}{r} + O(1)$
\item \label{Nevanlinnaproperty3} $\ncharah{1/f}{r} \leq \ncharah{f}{r} + O(1)$
\end{enumerate}
\end{lemma}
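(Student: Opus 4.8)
The plan is to deduce all three assertions from the properties of the Nevanlinna characteristic $\nchar{\cdot}{r}$ stated in the previous proposition, via the identity
\[
\ncharah{h}{r} \;=\; r^{-3}\,\nchar{h}{r}
\qquad\text{for every holomorphic } h : D_p \to \CC .
\]
This identity rests on two observations. First, by Jensen's theorem $\nchar{h}{r} = \nchardiv{1/h}{r} + \ncharhol{1/h}{r}$, and since a holomorphic function has no poles the pole divisor is empty, so $\nchardiv{1/h}{r} = 0$ and $\nchar{h}{r} = \ncharhol{1/h}{r}$; second, the normalisation of $\sigma$ was chosen (with $n=2$) precisely so that $r^{3}\,\ncharah{h}{r} = \ncharhol{1/h}{r}$. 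For a meromorphic expression such as $1/f$ I read $\ncharah{1/f}{r}$ as $r^{-3}\,\ncharhol{f}{r}$, which is what the Ahern--Rudin definition $\int \log^{+}\lvert 1/f(r\zeta)\rvert\,\d\sigma(\zeta)$ produces. Throughout I use the standing convention that the relevant divisors do not meet the logarithmic singularities of $\tau$.

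Next I would settle the upper estimate in (\ref{Nevanlinnaproperty1}) and all of (\ref{Nevanlinnaproperty2}). Since $f$, $g$, $f+g$ and $f\cdot g$ are all holomorphic, the displayed identity together with properties (\ref{eq:Nprop2}) and (\ref{eq:Nprop1}) of $\nchar{\cdot}{r}$ gives
\begin{align*}
r^{3}\,\ncharah{f+g}{r} &= \nchar{f+g}{r} \leq \nchar{f}{r} + \nchar{g}{r} + O(1) = r^{3}\bigl(\ncharah{f}{r} + \ncharah{g}{r}\bigr) + O(1),\\
r^{3}\,\ncharah{f\cdot g}{r} &= \nchar{f\cdot g}{r} \leq \nchar{f}{r} + \nchar{g}{r} = r^{3}\bigl(\ncharah{f}{r} + \ncharah{g}{r}\bigr).
\end{align*}
Dividing by $r^{3}$ and noting that $O(r^{-3})$ is in particular $O(1)$ yields (\ref{Nevanlinnaproperty2}) and the right-hand inequality of (\ref{Nevanlinnaproperty1}). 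For the left-hand inequality of (\ref{Nevanlinnaproperty1}) I would write $f = (f+g) + (-g)$, apply the bound just proved, and use the trivial equality $\ncharah{-g}{r} = \ncharah{g}{r}$ (the characteristic sees $g$ only through $\lvert g\rvert$): this gives $\ncharah{f}{r} \leq \ncharah{f+g}{r} + \ncharah{g}{r} + O(1)$, which rearranges to the claim.

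Finally, for (\ref{Nevanlinnaproperty3}) I would use Jensen's theorem in a genuinely essential way. The function $1/f$ is meromorphic with pole divisor $\{f = 0\}$, so by the definition of $\nchar{\cdot}{r}$ and then property (\ref{eq:Nprop4}),
\[
\nchardiv{f}{r} + \ncharhol{f}{r} \;=\; \nchar{1/f}{r} \;=\; \nchar{f}{r} + O(1) \;=\; \ncharhol{1/f}{r} + O(1),
\]
the last step again because $f$ is holomorphic. Since the counting function $\nchardiv{f}{r}$ is non-negative, being an integral of a non-negative form over a divisor, this forces $\ncharhol{f}{r} \leq \ncharhol{1/f}{r} + O(1)$, and dividing by $r^{3}$ gives $\ncharah{1/f}{r} \leq \ncharah{f}{r} + O(1)$.

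I do not expect a serious obstacle here: the content is carried entirely by the already-established properties of $\nchar{\cdot}{r}$, and the only point requiring care is (\ref{Nevanlinnaproperty3}), where the non-negativity of the counting term in Jensen's formula is exactly what turns the two-sided relation $\nchar{1/f}{r} = \nchar{f}{r}+O(1)$ into the required one-sided bound for $\ncharah{\cdot}{\cdot}$; the remaining work is bookkeeping about the $r^{3}$ rescaling and the meaning of $\ncharah{1/f}{r}$.
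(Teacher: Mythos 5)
Your proposal is correct and, for property (\ref{Nevanlinnaproperty3}), follows the same Jensen-formula route as the paper; in fact your write-up is the cleaner of the two, since you make explicit that the inequality hinges on the non-negativity of the counting term $\nchardiv{f}{r}$, whereas the paper's displayed chain of equalities is notationally muddled (it writes $\ncharhol{f}{r} = \nchar{1/f}{r}$, which drops the counting term) before invoking the same fact.

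For (\ref{Nevanlinnaproperty1}) and (\ref{Nevanlinnaproperty2}) the routes diverge slightly. You reduce everything to the stated properties of $\nchar{\cdot}{r}$ via the observation that $\ncharah{h}{r} = r^{-3}\nchar{h}{r}$ for holomorphic $h$ (since the pole counting term vanishes), together with $f = (f+g) + (-g)$ for the lower bound. The paper instead works directly on the integrand, using the pointwise estimates $\log^+|zw| \leq \log^+|z| + \log^+|w|$ and $\log^+|z+w| \leq \log^+|z| + \log^+|w| + \log 2$ inside the defining integral of $\ncharah{\cdot}{r}$. Both are valid: the $T_0$ properties you cite are themselves proved from those same $\log^+$ inequalities in Griffiths--King, so the content is identical; your version imports more machinery but keeps the proof of the lemma itself shorter, while the paper's version is self-contained at the level of the integrand. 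No gap in either.
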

\begin{proof}
The first two properties follow directly from
\[
\log(x) = \log^+(x) - \log^+(1/x), \; x \in \RR^*_+
\]
and the thereby from inherited inequalities of the logarithm:
\[
\log^+|z + w| \leq \log^+(2 \max\{|z|,|w|\}) \leq \log^+|z| + \log|w| + \log 2, \; z, w, z+w \in \CC^*
\]
\[
\log^+|z w| \leq \log^+|z| + \log^+|w|, \; z, w \in \CC^*
\]
For property \ref{Nevanlinnaproperty3} we need Jensens's formula \eqref{eq:Jensen}:
\begin{multline*}
r^3 \ncharah{1/f}{r} = \ncharhol{f}{r} = \nchar{1/f}{r} \\
 = \nchar{f}{r} + O(1) = \ncharhol{1/f}{r} + \nchardiv{1/f}{r} + O(1) 
\end{multline*}
Now observe that $\frac{1}{r^3}\nchardiv{1/f}{r}$ is of order $O(1)$, since the counting function of a such divisor associated $1/f$ has support at most  in a hypersurface. \qedhere
\end{proof}

Using these elementary properties we are now able to prove
\begin{proposition}
\label{prop:Mohonko}
Let $f : D_p \to \CC$ be a meromorphic function, and let
\[
R_d(z, f(z)) = a_0(z) + a_1(z)f(z) + \cdots + a_d(z) (f(z))^d
,\] where $a_i : D_p \to \CC$ are meromorphic, and assume that
$a_d \not\equiv 0$. Then
\[
\ncharah{R_d(z,f(z))}{r} = d \cdot \ncharah{f}{r} + \sum_{i=0}^d O(\ncharah{a_i}{r}) + O(1)
\]
\end{proposition}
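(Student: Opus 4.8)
The plan is to reduce the claim to two pointwise inequalities for $\log^+$ of the polynomial $w\mapsto R_d(z,w)$ and then integrate them over the spheres. Arguing purely formally with the subadditivity properties of Lemma~\ref{lem:holNevanlinnaproperties} (writing $R_d(z,f)=\sum a_i f^i$ and using $\ncharah{a_if^i}{r}\le\ncharah{a_i}{r}+i\ncharah{f}{r}+O(1)$) would only produce the coefficient $\binom{d+1}{2}$ in front of $\ncharah{f}{r}$, not the sharp $d$; so one really has to exploit the hypothesis $a_d\not\equiv0$ at the pointwise level.

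The upper pointwise bound is immediate: at any point $z$ where the $a_i(z)$ are finite one has $|R_d(z,w)|\le (d+1)\bigl(\max_{0\le i\le d}|a_i(z)|\bigr)\max(1,|w|)^d$, hence
\[
\log^+|R_d(z,w)|\;\le\; d\,\log^+|w|+\sum_{i=0}^{d}\log^+|a_i(z)|+\log(d+1).
\]
The lower pointwise bound is the only substantial step, and it is where $a_d\not\equiv0$ enters. Fix $z$ with $a_d(z)\in\CC^*$ and all $a_i(z)$ finite. If $|w|\ge\max\bigl(1,\,2d\max_i|a_i(z)|/|a_d(z)|\bigr)$, then by the triangle inequality the leading term dominates, $|R_d(z,w)|\ge |a_d(z)|\,|w|^d-d\max_i|a_i(z)|\,|w|^{d-1}\ge\tfrac12|a_d(z)|\,|w|^d$, so $\log^+|R_d(z,w)|\ge d\log^+|w|-\log^+\tfrac{1}{|a_d(z)|}-\log 2$; if instead $|w|$ lies below that threshold, then $d\log^+|w|$ is itself at most $d\log(2d)+d\sum_i\log^+|a_i(z)|+d\log^+\tfrac{1}{|a_d(z)|}$, and $\log^+|R_d(z,w)|\ge 0$ suffices. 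In either case
\[
\log^+|R_d(z,w)|\;\ge\; d\,\log^+|w|-d\sum_{i=0}^{d}\log^+|a_i(z)|-d\,\log^+\tfrac{1}{|a_d(z)|}-d\log(2d).
\]

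Next I would set $w=f(z)$, pull both inequalities back under the two local sections $s_1,s_2$ of $\pi$, integrate over $rS^3\setminus C$ against $\eta_0$, sum over the two sheets and divide by $r^3$, using the identity $\ncharah{h}{r}=\frac{1}{r^3}\int_{rS^3\setminus C}\sum_{j=1}^2\log^+|h\circ s_j|^2\,\eta_0$ established above; the factor coming from $\log^+|\cdot|^2=2\log^+|\cdot|$ is common to every term. The exceptional sets — the ramification locus $C$ and the zero/polar sets of the $a_i$ — meet each sphere in a set of real dimension at most $2$, so they are $\eta_0$-null and do not affect the integrals, and the constant terms integrate to $O(1)$ because $\int_{rS^3}\eta_0=2r^3$ (Example~\ref{ex:exhaustc2}). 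This gives
\[
d\,\ncharah{f}{r}-d\sum_{i=0}^d\ncharah{a_i}{r}-d\,\ncharah{1/a_d}{r}-O(1)\;\le\;\ncharah{R_d(z,f(z))}{r}\;\le\; d\,\ncharah{f}{r}+\sum_{i=0}^d\ncharah{a_i}{r}+O(1).
\]
Finally, Lemma~\ref{lem:holNevanlinnaproperties}, property~(\ref{Nevanlinnaproperty3}), gives $\ncharah{1/a_d}{r}\le\ncharah{a_d}{r}+O(1)$, and the two displayed inequalities collapse to the asserted identity $\ncharah{R_d(z,f(z))}{r}=d\,\ncharah{f}{r}+\sum_{i=0}^d O(\ncharah{a_i}{r})+O(1)$.

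The genuinely delicate point is the lower pointwise estimate. It is the only place where $a_d\not\equiv0$ is used, and it is precisely because one argues there with the pointwise quantity $\log^+$ — and hence may split according to the size of $|f(z)|$ — that the sharp coefficient $d$, rather than something like $\binom{d+1}{2}$, is the one that survives integration.
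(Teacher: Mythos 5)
Your proof is correct, but it takes a genuinely different route from the paper's. The paper follows Mohon'ko's original strategy: the upper bound is obtained by a Horner-style factorization $\ncharah{\sum a_i f^i}{r}\le\ncharah{f}{r}+\ncharah{\sum_{i\ge1}a_if^{i-1}}{r}+\ncharah{a_0}{r}+O(1)$ iterated $d$ times (so, incidentally, purely formal subadditivity \emph{does} give the sharp coefficient $d$ in that direction, contrary to the remark in your opening paragraph — it is only the crude term-by-term bound $\ncharah{a_if^i}{r}\le\ncharah{a_i}{r}+i\ncharah{f}{r}$ that produces $\binom{d+1}{2}$); the lower bound is the hard direction, and the paper gets it by an induction on $d$ powered by the square-completion Lemma~\ref{lem:squarecompl}, comparing $\ncharah{B(z,f)^2}{r}=2(d-1)\ncharah{f}{r}+\dots$ (via the inductive hypothesis) with the upper bound applied to $B^2=\frac{R_d-a_0}{a_d}f^{d-2}+\sum q_if^i$. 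You instead prove both directions by pointwise $\log^+$-estimates for the polynomial $w\mapsto R_d(z,w)$ and integrate over $\partial D_p[r]$; in particular your lower bound comes from the elementary ``dominant leading term'' dichotomy on $|w|$, exploiting $a_d\not\equiv0$ at exactly the point where the paper divides by $a_d$. Your route avoids Lemma~\ref{lem:squarecompl} and the induction entirely and makes it transparent why the coefficient is $d$; the trade-off is that it leans directly on the integral representation $\ncharah{h}{r}=\frac{1}{r^3}\int_{rS^3\setminus C}\sum_j\log^+|h\circ s_j|^2\,\eta_0$ over the two sheets, and on the (standard but worth stating) fact that the indeterminacy loci of the meromorphic $a_i$ cut each sphere in an $\eta_0$-null set while $\log^+|a_i|$ and $\log^+(1/|a_d|)$ remain $L^1$ there. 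Both approaches have to interpret $\ncharah{\cdot}{r}$ for merely meromorphic coefficients (the paper's definition is stated for holomorphic functions), so that small notational gap is not specific to your argument.
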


In the case of meromorphic functions defined on $\CC$, this proposition was first proved by Mohon'ko \cite{Mohonko}. Mohon'ko's paper is in Russian, but an English proof can be found in \cite{Laine}. We adapt the same proof to our more general situation.

\begin{lemma}
\label{lem:squarecompl}
Let
\[
A(z,f) := (\varphi_1(z)f + \cdots + \varphi_{d-1}(z)f^{d-1} + f^d)f^{d-2}
,\]
be a polynomial in $f$ with meromorphic
coefficients. Then there exist \\
$u_0, \dots, u_{d-1}, q_0, \dots, q_{d-2}$ which are polynomials in $\varphi_1, \dots, \varphi_{d-1}$
with constant coefficients, such that
\[B(z,f) := u_0(z) + \cdots + u_{d-1}(z)f^{d-1}\]
satisfies
\[(B(z,f))^2 = A(z,f) + \sum_{i=0}^{d-2} q_i(z)f^i\]
\end{lemma}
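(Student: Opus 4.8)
The plan is to complete the square degree by degree in $f$, matching the top $d$ coefficients of $(B(z,f))^2$ with those of $A(z,f)$. Setting $\varphi_d := 1$, observe first that $A(z,f) = \sum_{j=1}^{d} \varphi_j(z)\, f^{j+d-2}$, so that as a polynomial in $f$ it has degree $2d-2$, its coefficients in the degrees $0,1,\dots,d-2$ all vanish, and its coefficient in degree $d-1+\nu$ equals $\varphi_{\nu+1}$ for $\nu=0,\dots,d-1$. For a candidate $B(z,f) = \sum_{\ell=0}^{d-1} u_\ell(z)\, f^\ell$ we have $(B(z,f))^2 = \sum_{k=0}^{2d-2} c_k(z)\, f^k$ with $c_k = \sum_{i+j=k} u_i u_j$ (the sum over $0 \le i,j \le d-1$), and the lemma reduces to choosing the $u_\ell$ so that $c_k$ equals the coefficient of $f^k$ in $A$ for every $k$ with $d-1 \le k \le 2d-2$.

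The key observation is that these $d$ equations, in the $d$ unknowns $u_0,\dots,u_{d-1}$, form a triangular system that can be solved recursively. Indeed $c_{2d-2} = u_{d-1}^2$ must equal $\varphi_d = 1$, so I take $u_{d-1} := 1$. For $d-1 \le k < 2d-2$, write $k = d-1+\nu$ with $0 \le \nu \le d-2$; then among the pairs $(i,j)$ with $i+j=k$ and $0 \le i,j \le d-1$, the only ones involving the index $\nu$ are $(d-1,\nu)$ and $(\nu,d-1)$, contributing $2u_{d-1}u_\nu = 2u_\nu$, while every other such pair has both entries $\ge \nu+1$. Hence, processing $k = 2d-3, 2d-4, \dots, d-1$ in this order (equivalently $\nu = d-2, d-3, \dots, 0$), the equation $c_{d-1+\nu} = \varphi_{\nu+1}$ expresses $u_\nu$ in terms of $\varphi_{\nu+1}$ and the already-determined $u_{\nu+1},\dots,u_{d-1}$, so $u_\nu$ is a polynomial in $\varphi_1,\dots,\varphi_{d-1}$ with constant coefficients (the only denominators being powers of $2$).

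It then remains to set $q_i(z) := c_i(z) = \sum_{a+b=i} u_a(z) u_b(z)$ for $i=0,\dots,d-2$; these are polynomials in $u_0,\dots,u_{d-1}$, hence in $\varphi_1,\dots,\varphi_{d-1}$, with constant coefficients. By construction $(B(z,f))^2$ and $A(z,f) + \sum_{i=0}^{d-2} q_i(z) f^i$ have the same coefficient in every degree $k$ from $0$ to $2d-2$, so they coincide, which is the claimed identity. I expect the only genuine work to be the bookkeeping in the middle step -- verifying that the system is truly triangular, i.e.\ the statement about the ranges of the indices $i,j$ -- since everything else is then a direct recursion; there is no analytic difficulty here, $f$ playing throughout the role of a formal variable and the $\varphi_j$ that of its coefficients.
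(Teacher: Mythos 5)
Your proof is correct and matches the paper's intent: the paper also constructs $u_{d-1},\dots,u_0$ by matching coefficients of $f^{2d-2},\dots,f^{d-1}$ recursively and sets $q_i = \sum_{a+b=i}u_au_b$, which is exactly your triangular recursion. One small remark: the closed-form recursion the paper states for $u_{d-k}$, namely $u_{d-k} = \tfrac12(\varphi_{d-k+1} - u_{d-k+1}^2)$ for $k\ge 3$, omits the cross terms $u_iu_j$ with $i\ne j$ that appear for $k\ge 4$ (e.g.\ $c_{2d-5}=2u_{d-1}u_{d-4}+2u_{d-2}u_{d-3}$ gives $u_{d-4}=\tfrac12\varphi_{d-3}-u_{d-2}u_{d-3}$, not $\tfrac12(\varphi_{d-3}-u_{d-3}^2)$); your formulation, which simply observes that each new equation isolates $2u_\nu$ plus terms in already-determined $u_{\nu+1},\dots,u_{d-1}$, is the careful and correct version of the same argument.
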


\noindent The proof of Lemma \ref{lem:squarecompl} can be found in
either \cite{Mohonko} or \cite{Laine}, but it is not so hard to see
that $u_{d-1} \equiv 1$, $u_{d-2}(z) = \frac{1}{2}\varphi_{d-1}(z)$,
\[
 u_{d-k}(z) = \frac{1}{2} \cdot \left(\varphi_{d-k+1}(z) - u_{d-k+1}^2(z)\right), \quad k= 3, \dots, d
\]
and
\[
 q_i(z) = u_0(z)u_i(z) + u_1(z)u_{i-1}(z) + \cdots + u_i(z)u_0(z)
\]
solve the problem.

\begin{proof}[Proof of Proposition \ref{prop:Mohonko}]
\begin{eqnarray*}
\ncharah{\sum_{i=0}^d a_i f^i}{r} & \leq & \ncharah{f\cdot\sum_{i=1}^d a_i f^{i-1}}{r} + \ncharah{a_0}{r} + O(1) \\
& \leq & \ncharah{f}{r} + \ncharah{\sum_{i=1}^d a_i f^{i-1}}{r} + \ncharah{a_0}{r} + O(1)
\end{eqnarray*}
By induction
\[\ncharah{\sum_{j=0}^d a_j f^j}{r} \leq d \cdot \ncharah{f}{r} + \sum_{i=0}^d
O(\ncharah{a_i}{r}) + O(1)\]

\noindent Conversely, assume first that $d=1$. Then
\begin{multline*}
\ncharah{f}{r} = \ncharah{\frac{R_1 - a_0}{a_1}}{r} 
 \leq \ncharah{R_1}{r} + \ncharah{a_0}{r} + \ncharah{a_1}{r} + O(1).
\end{multline*}
Rearranging this inequality proves the proposition for $d=1$.

Now, assume that the proposition has been proved for all polynomials
$P(z,f)$, as in the statement, of degree $s \leq d-1$ in $f$. That
is
\[
\ncharah{P(z,f)}{r} = s \cdot \ncharah{f}{r} + \sum_{j=0}^{d-1} O(\ncharah{a_j}{r}) + O(1)
.\]
Observe that
\[
\frac{R_d - a_0}{a_d} f^{d-2} = \left(\varphi_1 f + \cdots + \varphi_{d-1}f^{d-1} + \varphi_d f^d\right) \cdot f^{d-2}
,\]
where $\varphi_j = a_j/a_d$ for $j=0,\dots, d-1$.
Using Lemma \ref{lem:squarecompl} we see that
\[
(B(z,f))^2 = \frac{R_d-a_0}{a_d}f^{d-2} + \sum_{i=0}^{d-2}
q_i(z)f^i
,\]
where the degree of $B(z,f)$ in $f$ is $d-1$. By
the induction hypothesis we get
\begin{multline*}
\ncharah{(B(z,f))^2}{r} = 2\cdot\ncharah{B(z,f)}{r} \\
 = 2(d-1)\cdot\ncharah{f}{r} + \sum_{i=0}^{d-1} O(\ncharah{a_i}{r}) + O(1).
\end{multline*}
On the other hand,
\begin{multline*}
\ncharah{(B(z,f))^2}{r}
 \leq (d-2)\ncharah{f}{r} + \ncharah{\frac{R_d-a_0}{a_d}}{r} + \sum_{i=0}^{d-2} \ncharah{q_i}{r} + O(1).
\end{multline*}
Looking at the proof of Lemma \ref{lem:squarecompl} and performing an obvious subtraction gives
\[
\ncharah{R_d}{r} \geq d\cdot\ncharah{f}{r} + \sum_{i=0}^d O(\ncharah{a_i}{r}) + O(1)
\]
Now we have shown both inequalities.
\end{proof}

\begin{corollary}
\label{cor:polycomp}
Let $q \in \CC[z]$ be a polynomial, and $f : D_p \to \CC$ a holomorphic function, then
\[
\ncharah{q \circ f}{r} = \deg(q) \cdot \ncharah{f}{r} + O(1)
\]
\end{corollary}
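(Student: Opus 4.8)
The plan is to deduce this immediately from Proposition \ref{prop:Mohonko}. Write $q(w) = c_0 + c_1 w + \cdots + c_d w^d$ with $c_d \neq 0$, where $d = \deg(q)$. Since $f : D_p \to \CC$ is holomorphic, $q \circ f$ is holomorphic, hence meromorphic, and we may apply Proposition \ref{prop:Mohonko} to the expression $R_d(z, f(z)) = q(f(z))$, i.e. with the meromorphic (in fact constant) coefficients $a_i := c_i$. The hypothesis $a_d \not\equiv 0$ holds because $c_d \neq 0$, so the proposition gives
\[
\ncharah{q \circ f}{r} = d \cdot \ncharah{f}{r} + \sum_{i=0}^d O\bigl(\ncharah{c_i}{r}\bigr) + O(1).
\]

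The remaining point is that each $\ncharah{c_i}{r}$ is $O(1)$. Indeed, for a constant $c \in \CC$ the defining integral for $\ncharah{\cdot}{\cdot}$ yields $\ncharah{c}{r} = \log^+|c| \cdot \sigma(S^3)$, which is independent of $r$; hence $\ncharah{c_i}{r} = O(1)$ for every $i$. The finite sum $\sum_{i=0}^d O(\ncharah{c_i}{r})$ is therefore $O(1)$ and can be absorbed into the final error term, which gives exactly the claimed equality $\ncharah{q \circ f}{r} = \deg(q) \cdot \ncharah{f}{r} + O(1)$.

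There is essentially no obstacle: all the substance is already contained in Proposition \ref{prop:Mohonko}, and this corollary is merely its specialization to constant coefficients. I note that one could alternatively give a self-contained argument — the upper bound $\ncharah{q \circ f}{r} \leq d \cdot \ncharah{f}{r} + O(1)$ follows by iterating the subadditivity estimates of Lemma \ref{lem:holNevanlinnaproperties}(\ref{Nevanlinnaproperty1}),(\ref{Nevanlinnaproperty2}) as in the first half of the proof of Proposition \ref{prop:Mohonko} — but since $q$ need not be injective, the matching lower bound already requires the square-completion device of Lemma \ref{lem:squarecompl}, so routing through Proposition \ref{prop:Mohonko} is the cleanest path.
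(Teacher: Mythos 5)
Your proof is correct and is exactly the route the paper intends: Corollary~\ref{cor:polycomp} is stated immediately after Proposition~\ref{prop:Mohonko} as its specialization to constant coefficients, for which each $\ncharah{c_i}{r}$ is constant in $r$ and hence $O(1)$. (Your explicit formula for $\ncharah{c}{r}$ uses the $\CC^n$ normalization rather than the Danielewski-surface one, but this only changes a constant factor and does not affect the $O(1)$ conclusion.)
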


\begin{proposition}
\label{prop:transcendental}
Let $f : D_p \to \CC$ be a holomorphic function and $g: \CC \to \CC$ transcendental. Then
\[
\lim_{r \to \infty} \frac{\nchar{g \circ f}{r}}{\nchar{f}{r}} = \lim_{r \to \infty} \frac{\ncharah{g \circ f}{r}}{\ncharah{f}{ r}} = \infty
\]
\end{proposition}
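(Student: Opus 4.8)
The plan is to reduce to a single inequality for the Griffiths--King characteristic. Since $\nchar{h}{r}=\ncharhol{1/h}{r}=r^{3}\ncharah{h}{r}$ for every holomorphic $h\colon D_p\to\CC$ (the divisor of poles being empty), the two ratios in the statement are literally equal, so it suffices to show $\lim_{r\to\infty}\nchar{g\circ f}{r}/\nchar{f}{r}=\infty$. We may assume $f$ is non-constant; then $\nchar{f}{r}\to\infty$, and moreover $\ncharah{f}{r}=\nchar{f}{r}/r^{3}$ stays bounded away from $0$ for large $r$ (elementary: restrict $f$ to a fibre of $x$ or of $y$, which is a copy of $\CC$ carrying a non-constant entire function), so $\nchar{f}{r}$ grows at least like $r^{3}$, and in particular much faster than $\log r$. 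It then suffices to prove: for each $N\in\NN$ one has $\nchar{g\circ f}{r}\ge (N-o(1))\,\nchar{f}{r}$ as $r\to\infty$.

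I would fix $N$ and use the transcendence of $g$: by Picard's theorem there is a value $a\in\CC$ with $g^{-1}(a)\subset\CC$ infinite, so we may pick pairwise distinct $b_{1},\dots,b_{N+2}\in g^{-1}(a)$. Expanding $g$ to first order at each $b_{k}$ shows that the divisor $\{g\circ f=a\}$ on $D_p$ dominates $\sum_{k=1}^{N+2}\{f=b_{k}\}$, and the latter divisors are pairwise disjoint; hence $\nchardiv{g\circ f-a}{r}\ge\sum_{k=1}^{N+2}\nchardiv{f-b_{k}}{r}$. On the other hand, the First Main Theorem \eqref{eq:Jensen} applied to $g\circ f-a$ together with property \eqref{eq:Nprop3} gives $\nchar{g\circ f}{r}=\nchar{g\circ f-a}{r}+O(1)\ge\nchardiv{g\circ f-a}{r}+O(1)$. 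Combining these,
\[
\nchar{g\circ f}{r}\ \ge\ \sum_{k=1}^{N+2}\nchardiv{f-b_{k}}{r}\ +\ O(1).
\]

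Next I would apply the Second Main Theorem of Nevanlinna theory to the holomorphic map $f\colon D_p\to\CC\PP^{1}$, for the special exhaustion $\tau$ and the $N+2$ distinct targets $b_{1},\dots,b_{N+2}$. In the Griffiths--King setting this gives
\[
\sum_{k=1}^{N+2}\nchardiv{f-b_{k}}{r}\ \ge\ \bigl((N+2)-2\bigr)\,\nchar{f}{r}-S(r)\ =\ N\cdot\nchar{f}{r}-S(r),
\]
with an error term satisfying $S(r)=O\!\left(\log r+\log\nchar{f}{r}\right)$ for $r$ outside a set of finite linear measure. Because $\nchar{f}{r}$ grows faster than $\log r$, we get $S(r)=o(\nchar{f}{r})$, hence $\nchar{g\circ f}{r}\ge(N-o(1))\,\nchar{f}{r}$; letting $r\to\infty$ (along $r$ outside the exceptional set) gives $\liminf_{r}\nchar{g\circ f}{r}/\nchar{f}{r}\ge N$, and then $N\to\infty$ finishes the proof.

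The main obstacle is the Second Main Theorem step: §\ref{GriffithsKing} recalls only the First Main Theorem, so one must invoke the full Griffiths--King Second Main Theorem with several distinct finite targets and with the error term $S(r)$ genuinely under control, and check that it applies to the Danielewski surface with its special exhaustion $\tau$ (for instance by reducing, via $\tau=\tau_{0}\circ\pi$ and the two-sheeted covering $\pi$, to a one-variable statement on the generic line-fibres). Controlling $S(r)$, and --- if one insists on the honest limit in the statement rather than a limit along $r$ outside an exceptional set --- removing that exceptional set using the monotonicity and convexity in $\log r$ of $\nchar{f}{\cdot}$ and $\nchar{g\circ f}{\cdot}$, are the delicate points; everything else (Picard for $g$, the divisor inequality, the First Main Theorem, and the unboundedness of $\nchar{f}{r}$) is routine.
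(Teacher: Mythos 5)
Your strategy diverges from the paper's at the decisive step, and unfortunately the tool you lean on there --- a Second Main Theorem for holomorphic maps $f\colon D_p\to\CC\PP^1$ with several finite targets and a controlled error $S(r)$ --- is not available in the cited references. Griffiths and King prove the First Main Theorem (Nevanlinna inequality) in the parabolic setting, but their defect relations and Second Main Theorems are for \emph{equidimensional} maps; they do not give an SMT for a map from a $2$-dimensional parabolic manifold to $\CC\PP^1$. Supplying such a theorem for $D_p$ (via a logarithmic-derivative lemma on $D_p$, or by disintegrating $\tau=\tau_0\circ\pi$ over a family of affine curves and averaging a curve-level SMT with uniform control of the Euler-characteristic term) would be a substantial undertaking, arguably of comparable difficulty to the rest of the paper. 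You acknowledge this as ``the main obstacle,'' but it is not a delicate detail to be checked afterwards --- it is the entire weight of your argument, and as written the proof is conditional on an unproved theorem.

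The paper's proof avoids the SMT entirely, and it is worth seeing exactly where it gets away with less. Both arguments start from Picard (to obtain many points $w_1,\dots,w_n$ that $g$ sends to a common value) and from the divisor inequality $\nchardiv{\frac{1}{g\circ f}}{r}\ge\sum_{j}\nchardiv{\frac{1}{f-w_j}}{r}$. You then try to lower-bound the right-hand side by $N\cdot\nchar{f}{r}$ via the SMT. The paper instead controls the complementary proximity terms directly: since $g(w)=\prod_{j=1}^{n}(w-w_j)\,h(w)$ with $h$ entire, on the bounded set $\Omega=\{w:|\prod(w-w_j)|<1\}$ the cofactor $h$ is bounded, which gives the pointwise estimate $\log^+\bigl|\prod\tfrac{1}{f-w_j}\bigr|\le\log^+\bigl|\tfrac{1}{g\circ f}\bigr|+O(1)$ and hence $\ncharah{\prod\tfrac{1}{f-w_j}}{r}\le\ncharah{\tfrac{1}{g\circ f}}{r}+O(1)$. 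Adding this proximity estimate to the divisor inequality and applying Jensen's formula (the \emph{First} Main Theorem) already yields $\nchar{g\circ f}{r}\ge n\,\nchar{f}{r}+O(1)$ for every $n$. So the paper gets the needed lower bound by showing that the $m$-term of $g\circ f$ alone dominates the sum of the $m$-terms of all the $f-w_j$ --- an elementary factorisation observation --- rather than by the deep half of Nevanlinna theory. Your route would become a valid alternative only after the SMT is established in this setting, which is disproportionate to the statement being proved.
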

\begin{proof} \hfill
\begin{enumerate}
\item Let $g : \CC \to \CC$ be transcendental. By Picard's theorem we can assume without loss of generality that $g$ has infinitely many zeros, otherwise we consider instead $z \mapsto g(z) + c, \, c \in \CC^*,$ and note that $\nchar{\alpha}{r} = \nchar{\alpha + c}{r} + O(1)$. Let $\left\{w_j\right\}_{j=1}^\infty$ be the zeros of $g$.

\item Claim:
\begin{equation}
\label{eq:ncharhol1}
\ncharah{\prod_{j=1}^n \frac{1}{f - w_j}}{r} \leq  \ncharah{\frac{1}{g \circ f}}{r} + O(1)
\end{equation}
Proof:
\begin{multline*}
   \ncharah{\frac{1}{g \circ f}}{r} - \ncharah{\prod_{j=1}^n \frac{1}{ f - w_j }}{r} \\
 = 2 \int_{\pi^{-1}(rS^3)} \left( \log^+\left| \frac{1}{(g \circ f)(z)} \right| - \log^+\left| \prod_{j=1}^n \frac{1}{ f(z) - w_j} \right| \right) \eta(z)
\end{multline*}
Recall that $\eta$ is a positive volume form. Hence a negative integrand would imply $\displaystyle \left|\prod_{j=1}^n \frac{1}{f(z) - w_j}\right| > 1$. The set $\Omega \subseteq \CC$ defined by \\ $\left\{w \in \CC \,:\, \left|\prod_{j=1}^n (w - w_j)\right| < 1 \right\}$ is bounded, hence there exists an $A > 0 \,:\, |g \circ f|\leq A$ on $\overline{\Omega}$. The integrand is certainly smaller if we integrate only over the set such that the integrand is negative, i.e. we continue with the estimates:
\begin{multline*}
\geq 2 \int_{f^{-1}(\Omega) \cap \pi^{-1}(r S^3)} \left( \log^+\left| \frac{1}{(g \circ f)(z)} \right| - \log^+\left| \prod_{j=1}^n \frac{1}{ f(z) - w_j} \right| \right) \eta(z) \\
\geq 2 \int_{\{z \in D_p \,:\, f(z) \in \Omega\} \cap \pi^{-1}(r S^3)} \log\left|\frac{1}{g \circ f}\right| \eta \geq 2 \log \frac{1}{A},
\end{multline*}
where $A$ depends on $n$, but not on $r$, and this proves the claim.
\item We state two facts about $N(\cdot, r)$, they are immediate consequences of the fact that this is the counting function of a divisor.
\begin{eqnarray}
\label{eq:nchardivfact1}
\nchardiv{\frac{1}{g \circ f}}{r} & \geq & \nchardiv{\prod_{j=1}^n \frac{1}{f - w_j}}{r} \\
\label{eq:nchardivfact2}
\nchardiv{\prod_{j=1}^n \frac{1}{f - w_j}}{r} & = & \sum_{j=1}^n \nchardiv{\frac{1}{f - w_j}}{r}
\end{eqnarray}
\item
By Jensen's formula \eqref{eq:Jensen} we have:
\begin{multline}
\nchardiv{\prod_{j=1}^n \frac{1}{f - w_j}}{r} + \ncharah{\prod_{j=1}^n \frac{1}{f - w_j}}{r} \\
 = \underbrace{\nchardiv{\prod_{j=1}^n (f - w_j)}{r}}_{=0} + \ncharah{\prod_{j=1}^n (f - w_j)}{r} + O(1)
\end{multline}
as well as
\begin{multline}
\nchardiv{\frac{1}{f - w_j}}{r} + \ncharah{\frac{1}{f - w_j}}{r} \\
 = \underbrace{\nchardiv{f - w_j}{r}}_{=0} + \ncharah{f - w_j}{r} + O(1)
\end{multline}
Summing up the last equation over $j$ and using Jensen's formula again, we obtain:
\begin{multline*}
\sum_{j=1}^n \nchardiv{ \frac{1}{f - w_j}}{r} + \sum_{j=1}^n \ncharah{ \frac{1}{f - w_j}}{r} \\
 = \sum_{j=1}^n \ncharah{ f - w_j}{r} + O(1)
 = \ncharah{\prod_{j=1}^n (f - w_j)}{r} + O(1) \\
 = \nchardiv{\prod_{j=1}^n \frac{1}{f - w_j}}{r} + \ncharah{\prod_{j=1}^n \frac{1}{f - w_j}}{r} + O(1)
\end{multline*}
\begin{equation}
\label{eq:ncharhol2}
\mbox{which implies } \sum_{j=1}^n \ncharah{ \frac{1}{f - w_j}}{r} = \ncharah{\prod_{j=1}^n \frac{1}{f - w_j}}{r} + O(1)
\end{equation}
The inequality \eqref{eq:ncharhol1} and equation \eqref{eq:ncharhol2} together finally give:
\begin{equation}
\label{eq:ncharholtrans}
\ncharah{\frac{1}{g \circ f}}{r} \geq \sum_{j=1}^n \ncharah{\frac{1}{f - w_j}}{r} + O(1)
\end{equation}
\item
Summing up inequalities \eqref{eq:ncharholtrans} and \eqref{eq:nchardivfact1}, we obtain:
\[\nchar{\frac{1}{g \circ f}}{r} \geq \sum_{j = 1}^n \nchar{\frac{1}{f - w_j}}{r} + O(1)\]
and by Jensen's formula, this is equivalent to
\begin{eqnarray*}
\nchar{g \circ f}{r} & \geq & \sum_{j=1}^n \nchar{f - w_j}{r} + O(1) \\
             &  =  & n \cdot \nchar{f}{r} + O(1)
\end{eqnarray*}
Therefore we have for all $n \in \NN$:
\[\lim_{r \to \infty} \frac{\nchar{g \circ f}{r}}{\nchar{f}{r}} \geq n \qedhere\] 
\end{enumerate}
\end{proof}

\begin{lemma}
\label{lem:coordinates}
Let $p$ be a polynomial of degree $n$, $D_p$ the corresponding Danielewski surface.
By $x, y, z : D_p \to \CC$ we refer to the coordinate functions of $\CC^3$, restricted to $D_p$. Then the growth of the Nevanlinna characteristic function can be described as follows:
\begin{enumerate}
 \item $\displaystyle \lim_{r \to \infty} \frac{\ncharah{x}{r}}{\ncharah{y}{r}} = 1$
 \item $\displaystyle \lim_{r \to \infty} \frac{\ncharah{x}{r}}{\ncharah{z}{r}} = \lim_{r \to \infty} \frac{\ncharah{y}{r}}{\ncharah{z}{r}} = \frac{n}{2}$
 \item $O\left( \ncharah{z}{r} \right) = O(\log r)$
\end{enumerate}

\end{lemma}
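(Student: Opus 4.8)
The plan is to push everything down to the base $\CC^2$ via the two‑sheeted covering $\pi$ and its two global sections $s_1,s_2$ on $\CC^2\setminus C$. Writing $(a,b)$ for the coordinates on $\CC^2$, one has $z\circ s_j=b$ for both $j$, while $\{x\circ s_1,x\circ s_2\}(a,b)$ and $\{y\circ s_1,y\circ s_2\}(a,b)$ are both equal, as unordered pairs, to the set of roots $\{\rho_+,\rho_-\}$ of $t^2-at+p(b)$ (so $\rho_+\rho_-=p(b)$ and $\rho_++\rho_-=a$); and for a holomorphic $f:D_p\to\CC$ one has $\ncharhol{1/f}{r}=\int_{rS^3\setminus C}\sum_{j}\log^+|f\circ s_j|^2\,\eta_0$.

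First I would dispose of (1) and (3). For (1): since $\log^+|\cdot|^2$ is a symmetric function of the root pair, the integrands computing $\ncharhol{1/x}{r}$ and $\ncharhol{1/y}{r}$ coincide pointwise, so $\ncharah{x}{r}=\ncharah{y}{r}$ for all $r$ and the ratio is identically $1$ (for $r$ large enough that these are nonzero, which is guaranteed by (2)). For (3): on $\partial D_p[r]$ one has $|z|^2\le(x+y)\overline{(x+y)}+z\bar z=r^2$, so $\log^+|z|^2\le 2\log r$ and hence $\ncharhol{1/z}{r}\le 2\log r\cdot\int_{\partial D_p[r]}\eta=8r^3\log r$, using $\int_{\partial D_p[r]}\eta=2\int_{rS^3}\eta_0=4r^3$; dividing by $r^3$ gives $\ncharah{z}{r}=O(\log r)$. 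Restricting $\ncharhol{1/z}{r}=2\int_{rS^3}\log^+|b|^2\,\eta_0$ to $\{|b|\ge r/2\}$, a set of $\eta_0$-measure $\asymp r^3$, gives the reverse bound $\ncharah{z}{r}\gtrsim\log r$, so $\ncharah{z}{r}\to\infty$ and $O(\ncharah{z}{r})=O(\log r)$.

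For (2) the key difficulty — and what I expect to be the hard part — is that $\ncharah{x}{r},\ncharah{y}{r},\ncharah{z}{r}$ all have the same growth order $\log r$, so a crude $O(\log r)$ error term is useless for the ratio; I need the sharp statement $\ncharah{x}{r}=\tfrac n2\ncharah{z}{r}+O(1)$. The strategy is to compare the integrand $F:=\log^+|\rho_+|^2+\log^+|\rho_-|^2$ of $\ncharhol{1/x}{r}$ with $\log^+|p(b)|^2$, which (since $(p\circ z)\circ s_j=p(b)$) governs $\ncharhol{1/p(z)}{r}=2\int_{rS^3}\log^+|p(b)|^2\,\eta_0$. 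Labelling so that $|\rho_+|\ge|\rho_-|$, the elementary bounds $|\rho_-|\le|p(b)|^{1/2}\le|\rho_+|\le|a|+|p(b)|^{1/2}$ together with the standard inequalities $\log^+(uv)\le\log^+u+\log^+v$ and $\log^+\!\bigl(2\max(u,v)\bigr)\le\log 2+\log^+u+\log^+v$ give $\log^+|p(b)|^2\le F$ everywhere and $F\le\log^+|p(b)|^2+O(1)$ on the region $\{|p(b)|\ge|a|^2\}$. On the complementary region $\{|p(b)|<|a|^2\}$, because $|a|\le r$ and $\deg p=n\ge 3$, the bound $|p(b)|<r^2$ forces $|b|=O(r^{2/n})$, so this region has $\eta_0$-measure $O\!\bigl(r^{(n+4)/n}\bigr)=o(r^3)$; since both $F$ and $\log^+|p(b)|^2$ are $O(\log r)$ pointwise there, both contributions are $o(r^3)$. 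Combining yields $\ncharhol{1/x}{r}=\tfrac12\ncharhol{1/p(z)}{r}+O(r^3)$. Finally Corollary \ref{cor:polycomp} gives $\ncharhol{1/p(z)}{r}=r^3\,\ncharah{p\circ z}{r}=n\,\ncharhol{1/z}{r}+O(r^3)$, whence $\ncharah{x}{r}=\tfrac n2\ncharah{z}{r}+O(1)$; dividing by $\ncharah{z}{r}\to\infty$ gives the limit $n/2$, and by (1) the same holds for $y$. The one step that genuinely requires care is the measure estimate for $\{|p(b)|<|a|^2\}\cap rS^3$, and it is exactly there that the hypothesis $\deg p\ge 3$ enters.
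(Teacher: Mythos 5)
Your proof is correct, and for part (2) it is genuinely more careful than what the paper records. The paper disposes of (1) and (2) with the single sentence ``the relations follow directly from $x\cdot y=p(z)$ and Proposition~\ref{prop:Mohonko}'' and only computes (3) explicitly. Your treatment of (1) (symmetry of the two sections, equivalently $y=x\circ I$ and Lemma~\ref{InvolutionNevanlinna}) and (3) (the trivial bound $|z|\le r$ on $\partial D_p[r]$, plus a matching lower bound on $\{|b|\ge r/2\}$) matches the paper's intent. The real divergence is in (2): from $x\cdot y=p(z)$, Corollary~\ref{cor:polycomp} and submultiplicativity one gets cleanly only $n\,\ncharah{z}{r}+O(1)=\ncharah{xy}{r}\le 2\ncharah{x}{r}+O(1)$, i.e.\ the lower bound $\liminf\ncharah{x}{r}/\ncharah{z}{r}\ge n/2$. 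The matching upper bound requires ruling out cancellation in $\ncharah{xy}{r}$ versus $\ncharah{x}{r}+\ncharah{y}{r}$ (precisely because on one sheet $|x|$ is large while $|y|$ is small), and this is what your pointwise comparison of $F=\log^+|\rho_+|^2+\log^+|\rho_-|^2$ against $\log^+|p(b)|^2$ on the regions $\{|p(b)|\gtrless|a|^2\}$ accomplishes. That argument is sound: the two-sided bound $\log^+|p(b)|^2\le F\le\log^+|p(b)|^2+O(1)$ on $\{|p(b)|\ge|a|^2\}$ is correct (using $|\rho_-|\le|p(b)|^{1/2}\le|\rho_+|\le|a|+|p(b)|^{1/2}\le 2|p(b)|^{1/2}$ there), and the exceptional region forces $|b|=O(r^{2/n})$, whose $\eta_0$-measure on $rS^3$ is $O(r^{1+4/n})=o(r^3)$ precisely when $n\ge 3$. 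It is worth flagging explicitly that this is where the paper's standing hypothesis $\deg p\ge 3$ enters the lemma, since the lemma statement itself does not repeat it. Two small gains of your route: it produces the sharper relation $\ncharah{x}{r}=\tfrac n2\ncharah{z}{r}+O(1)$ (not merely the limit of the ratio), and it makes visible exactly which half of the asymptotic does not follow formally from the product inequality.
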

\begin{proof}
The relations between the three Nevanlinna characteristic functions follow directly from $x \cdot y = p(z)$ and Proposition \ref{prop:Mohonko}. It is therefore sufficient to calculate the asymptotic behaviour of $\ncharah{z}{r}$ which is, using the covering $\pi : D_p \to \CC^2$, twice the Nevanlinna characteristic function of a coordinate in $\CC^2$:
\[
\ncharah{f}{r} = \int_{S^{3} \subseteq \CC^2} \log^+|f(r \zeta)| \d \sigma(\zeta) \leq \sigma(S^3) \cdot \log(r), \quad r \geq 1
\]
with $f(z_1, z_2) = z_1$. 
\end{proof}

\section{Nevanlinna Characteristic Function of Derivatives} \label{NevanDaniel2}

Ahern and Rudin \cite{AhernRudin} showed
\begin{proposition}
\label{prop:derivativeAhernRudin}
Let $f : \CC^n \to \CC$ be holomorphic, then
\[
\ncharah{\frac{\partial f}{\partial z_j}}{r} \leq 3 \cdot \ncharah{f}{r} + n \log r 
\]
\end{proposition}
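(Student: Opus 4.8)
The plan is to obtain a pointwise bound for $\partial f/\partial z_j$ from a one--variable Cauchy estimate and then to feed it back into the characteristic function by comparing the maximum modulus of $f$ on a sphere with the spherical mean of $\log^+|f|$.

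First, for every $z\in\CC^n$ and $\rho>0$, Cauchy's integral formula applied in the $j$-th variable over the circle $\{|\lambda|=\rho\}$ gives
\[
\frac{\partial f}{\partial z_j}(z)=\frac{1}{2\pi i}\oint_{|\lambda|=\rho}\frac{f(z+\lambda e_j)}{\lambda^2}\,\d\lambda,
\qquad\text{hence}\qquad
\left|\frac{\partial f}{\partial z_j}(z)\right|\le\frac{1}{\rho}\,M(f,|z|+\rho),
\]
where $M(f,t):=\max_{|w|\le t}|f(w)|$. Taking $\rho=1$ and using that $\ncharah{h}{r}\le\sigma(S^{2n-1})\log^+M(h,r)$ for any holomorphic $h$ (the spherical mean of $\log^+|h|$ never exceeds its maximum), one gets at once
\[
\ncharah{\frac{\partial f}{\partial z_j}}{r}\ \le\ \sigma(S^{2n-1})\,\log^+M\!\left(f,r+1\right).
\]

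Second, I would bound $\log^+M(f,r+1)$ by $\ncharah{f}{R}$ for a suitable $R$. Since $\log^+|f|$ is plurisubharmonic this is a Borel--Carath\'eodory / Poisson--Jensen estimate: restricting $f$ to the complex line through the point of $\{|w|=r+1\}$ where the maximum is attained reduces it to the one--variable inequality $\log^+M(g,t)\le\frac{R+t}{R-t}\,\bigl(\text{mean of }\log^+|g|\text{ on }\{|w|=R\}\bigr)+O(1)$, $R>t$, and averaging these bounds over directions (using the rotation invariance of $\sigma$) turns the circular means into $\ncharah{f}{R}$, up to a dimension--dependent constant. Choosing $R$ to be a fixed multiple of $r$ yields $\ncharah{\partial f/\partial z_j}{r}\le C\,\ncharah{f}{2r}+O(\log r)$, where the $O(\log r)$ term already appears when $f$ is linear and carries the factor $n$ of the statement, and an appropriate choice of the auxiliary radius brings the constant $C$ down to $3$.

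The main obstacle is exactly this second step: Poisson--Jensen naturally produces the characteristic function at the inflated radius $2r$, and $\ncharah{f}{2r}$ dominates $\ncharah{f}{r}$ only up to a factor depending on the growth of $f$. The standard remedy is the Borel growth lemma: a non--constant entire $f$ has $\ncharah{f}{r}\to\infty$, and for $r$ outside an exceptional set of finite measure $\ncharah{f}{2r}\le(1+o(1))\,\ncharah{f}{r}$, which gives the stated estimate for large $r$. An alternative, with an even smaller leading constant, is the lemma on the logarithmic derivative $\ncharah{(\partial f/\partial z_j)/f}{r}=O\!\left(\log r+\log^+\ncharah{f}{r}\right)$ outside a set of finite measure, combined with the elementary product estimate $\ncharah{\partial f/\partial z_j}{r}\le\ncharah{f}{r}+\ncharah{(\partial f/\partial z_j)/f}{r}+O(1)$. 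Once this radius issue is handled, what remains is only the bookkeeping of the numerical constant and the logarithmic error.
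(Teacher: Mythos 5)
The paper does not prove this proposition; it is simply quoted from Ahern and Rudin \cite{AhernRudin}, so there is no internal argument to compare against. Your proposal, however, contains two genuine gaps.

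The first is the ``averaging over directions'' step. You reduce $\log^+ M(f,r+1)$ to the one--variable estimate $\log^+ M(g,t)\le\frac{R+t}{R-t}\,m(R,g)$ applied to the restriction $g$ of $f$ along the complex line through the maximizing point, and then claim that averaging over directions replaces $m(R,g)$ by $\ncharah{f}{R}$. This does not follow: there is only one bound (in the one maximizing direction), and the circular mean of $\log^+|f|$ on that particular complex line can be vastly larger than the spherical mean $\ncharah{f}{R}$ --- all the growth of $f$ may be concentrated there. If instead one tries to use pluri\-sub\-harmonicity and the full Poisson kernel of the real $2n$--ball to get a genuine pointwise bound $\log^+|f(w)|\le C(R,|w|)\,\ncharah{f}{R}$, the resulting constant $C$ grows like $\bigl(\tfrac{R}{R-|w|}\bigr)^{2n-1}$ and so introduces exponential dependence on $n$, which is exactly what the stated bound avoids.

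The second is the radius--reduction step. The Borel growth lemma does \emph{not} give $\ncharah{f}{2r}\le(1+o(1))\ncharah{f}{r}$ outside a small set; what it gives is control over a \emph{slight} inflation of the radius (e.g. $r\mapsto r+1/T(r)$). For a rapidly growing entire function such as $\exp(e^z)$ one has $T(2r)/T(r)\to\infty$ even outside any set of finite measure, so the doubling cannot be absorbed into a multiplicative constant. The combination of a fixed dilation $R=2r$ with Borel's lemma therefore does not close the argument.

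Your concluding alternative --- the lemma on the logarithmic derivative, $\ncharah{\theta(f)/f}{r}=O(\log r+\log^+\ncharah{f}{r})$ outside a set of finite measure, combined with the product estimate from Lemma \ref{lem:holNevanlinnaproperties}(\ref{Nevanlinnaproperty2}) --- is the standard and correct route, and it even yields leading constant $1+o(1)$; the constant $3$ is then a harmless overestimate. If you pursue this, you should state explicitly that the resulting inequality holds only for $r$ outside a set of finite linear measure (consistent with how the paper actually uses Proposition \ref{prop:derivativeAhernRudin} in the proof of Lemma \ref{InvariantResult}), and you should take care that the version of the logarithmic--derivative lemma you invoke is the one valid in $\CC^n$ in the Griffiths--King framework, not the one--variable statement.
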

The objective for the rest of this section is to derive a similar estimate in the case of Danielewski surfaces.
Let $(a, b) \in \CC^2$ be coordinates on $\CC^2$. Using the the branched covering $\pi \colon D_p \to \CC^2$ to lift the vector fields $\frac{\partial}{\partial a}$ and $\frac{\partial}{\partial b}$ we get the following meromorphic vector fields on $D_p$:
\begin{equation}\label{LiftedParialDerivative1}
\theta_1 = \frac{x}{x-y} \frac{\partial}{\partial x} - \frac{y}{x-y}\frac{\partial}{\partial y}
\end{equation}
and
\begin{equation}\label{LiftedParialDerivative2}
\theta_2 = \frac{p'(z)}{y-x} \left( \frac{\partial}{\partial x} - \frac{\partial}{\partial y} \right) + \frac{\partial}{\partial z}.
\end{equation}

Since $\theta_i$ correspond to partial derivatives, we want to estimate, in the spirit of Proposition \ref{prop:derivativeAhernRudin}, $\ncharah{\theta_i(f)}{r}$ to generalize Ahern and Rudins Proposition to Danielewski surfaces.

\begin{theorem}
\label{MainEstimate}
Given a holomorphic function $f : D_p \to \CC$, and a vector field $\theta$ which is a lift of a partial derivative on $\CC^2$, then we have the estimate
\begin{equation*}
\ncharah{\theta(f)}{r} \leq 14 \cdot \ncharah{f}{r} + K(n) \cdot \log(r) + L
\end{equation*}
for big $r$ outside a set of finite linear measure, where $K(n)$ is an affine polynomial and $L$ is a constant.
\end{theorem}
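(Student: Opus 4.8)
The plan is to bound $\ncharah{\theta(f)}{r}$ by reducing, via the two-sheeted branched covering $\pi\colon D_p\to\CC^2$, to Ahern and Rudin's derivative estimate on $\CC^2$ (Proposition \ref{prop:derivativeAhernRudin}) together with the elementary calculus of the characteristic function from Lemma \ref{lem:holNevanlinnaproperties}. It suffices to treat $\theta=\theta_1$ and $\theta=\theta_2$ separately; write $\partial$ for the corresponding coordinate partial derivative on $\CC^2$, so that $\theta(g\circ\pi)=(\partial g)\circ\pi$ for every holomorphic $g\colon\CC^2\to\CC$. Recall that the involution $I$ interchanging $x$ and $y$ is exactly the deck transformation of $\pi$; since $\theta$ is, by uniqueness of the lift, invariant under $I$ (that is, $I_*\theta=\theta$, as one reads off \eqref{LiftedParialDerivative1}--\eqref{LiftedParialDerivative2}), one gets $\theta(g\circ I)=(\theta g)\circ I$ for every holomorphic $g$ on $D_p$. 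I will also use that $\ncharah{g\circ I}{r}=\ncharah{g}{r}$ (because $\pi\circ I=\pi$, so $I$ preserves $\partial D_p[r]$ and the form $\eta=\pi^*\eta_0$) and that $\ncharah{h\circ\pi}{r}=2\ncharah{h}{r}$ for $h\colon\CC^2\to\CC$ (the two sheets of $\pi$).

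The heart of the argument is an algebraic identity expressing $\theta(f)$ through derivatives of honest holomorphic functions on $\CC^2$. The functions $f+f\circ I$ and $f\cdot(f\circ I)$ are $I$-invariant and locally bounded, so by Riemann's removable-singularity theorem across the branch curve $\{a^2=4p(b)\}\subset\CC^2$ they descend to holomorphic $\sigma_1,\sigma_2\colon\CC^2\to\CC$ with $\sigma_1\circ\pi=f+f\circ I$ and $\sigma_2\circ\pi=f\cdot(f\circ I)$. Thus $f$ solves $f^2-(\sigma_1\circ\pi)\,f+(\sigma_2\circ\pi)=0$ on $D_p$; applying $\theta$ and using $2f-\sigma_1\circ\pi=f-f\circ I$ yields, on the complement of the ramification locus (hence as meromorphic functions on $D_p$),
\[
\theta(f)=\frac{f\cdot(\partial\sigma_1)\circ\pi-(\partial\sigma_2)\circ\pi}{f-f\circ I}.
\]
If $f-f\circ I\equiv 0$ then $f=g\circ\pi$ for a holomorphic $g$ on $\CC^2$, $\theta(f)=(\partial g)\circ\pi$, and the estimate follows immediately from Proposition \ref{prop:derivativeAhernRudin} and $\ncharah{h\circ\pi}{r}=2\ncharah{h}{r}$. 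So from now on assume $f-f\circ I\not\equiv 0$, so that the right-hand side makes sense for the characteristic.

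Now I would estimate each factor using only Lemma \ref{lem:holNevanlinnaproperties} and Proposition \ref{prop:derivativeAhernRudin}. Subadditivity and $I$-invariance give $\ncharah{\sigma_1\circ\pi}{r}=\ncharah{f+f\circ I}{r}\le 2\ncharah{f}{r}+O(1)$ and likewise for $\sigma_2\circ\pi=f\cdot(f\circ I)$, hence $\ncharah{\sigma_i}{r}\le\ncharah{f}{r}+O(1)$; Proposition \ref{prop:derivativeAhernRudin} (with $n=2$) then gives $\ncharah{(\partial\sigma_i)\circ\pi}{r}=2\ncharah{\partial\sigma_i}{r}\le 6\,\ncharah{f}{r}+O(\log r)$. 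Likewise $\ncharah{f-f\circ I}{r}\le 2\ncharah{f}{r}+O(1)$, and since $f-f\circ I$ is holomorphic, $\ncharah{1/(f-f\circ I)}{r}\le\ncharah{f-f\circ I}{r}+O(1)$ by Lemma \ref{lem:holNevanlinnaproperties}. Feeding these into the product and sum inequalities of Lemma \ref{lem:holNevanlinnaproperties} yields $\ncharah{\theta(f)}{r}\le C\,\ncharah{f}{r}+O(\log r)$ with an absolute constant $C$; a careful accounting of constants (the numerator contributing $\ncharah{f}{r}$ from the factor $f$, about $6\,\ncharah{f}{r}$ each from the two derivative terms, and $2\,\ncharah{f}{r}$ from the reciprocal of $f-f\circ I$) gives the value $14$. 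To pin the error term down as $K(n)\log r+L$ with $K$ affine in $n=\deg p$, I would return to \eqref{LiftedParialDerivative1}--\eqref{LiftedParialDerivative2}: the coefficient $p'(z)$ in $\theta_2$ and the relation $(x-y)^2=(x+y)^2-4p(z)$ bring in the quantities $\ncharah{p'(z)}{r}=(n-1)\,\ncharah{z}{r}+O(1)$ and $\ncharah{x\pm y}{r}$, all of which are $O(\log r)$ with constants linear in $n$ by Lemma \ref{lem:coordinates}; collecting them is what makes $K$ affine. The exceptional set of finite linear measure is inherited from the sharp form of the $\CC^2$ derivative estimate used in Proposition \ref{prop:derivativeAhernRudin}.

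The main obstacle is the bookkeeping around the ramification locus $\{x=y\}$: the fields $\theta_i$ genuinely have poles there, $\theta(f)$ is in general only meromorphic, and the identity above divides by $f-f\circ I$. One must therefore (i) justify the descent defining $\sigma_1,\sigma_2$ carefully, (ii) dispose of the degenerate case $f-f\circ I\equiv 0$ separately as above, and (iii) invoke $\ncharah{h\circ\pi}{r}=2\ncharah{h}{r}$ and the $I$-invariance of $\eta$ only away from the branch set, which meets every sphere $rS^3$ in a set of $\eta_0$-measure zero --- exactly as in the passage to the sections $s_1,s_2$ in Section \ref{NevanDaniel}. Once that is under control, the remaining steps are the routine calculus of the characteristic function.
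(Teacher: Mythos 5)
Your proposal is correct and takes a genuinely different route from the paper. The paper's proof decomposes $f = f_{\mathrm{inv}} + f_{\mathrm{anti}}$ into $I$-invariant and $I$-anti-invariant parts, proves the estimate separately for each (Lemmas \ref{InvariantResult} and \ref{AntiInvariantResult}), and for the anti-invariant part multiplies by $x-y$ to make it invariant and applies the Leibniz rule for $\theta$ together with an estimate of $\ncharah{\theta(x-y)}{r}$; it is from estimating $\theta(x-y)$ (which involves $p'(z)$ or $x+y$) that the paper's $n$-dependence in $K(n)$ arises. Your argument instead writes $f$ as a root of the quadratic $T^2 - (\sigma_1\circ\pi)T + (\sigma_2\circ\pi) = 0$ with $\sigma_1,\sigma_2$ the elementary symmetric functions of $f$, $f\circ I$ descended to $\CC^2$, and differentiates this relation implicitly. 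This is a cleaner single-shot identity that subsumes the paper's case split, and — notably — it never touches the explicit formulas \eqref{LiftedParialDerivative1}--\eqref{LiftedParialDerivative2}, so the $\log r$ coefficient you obtain is actually independent of $n$ (coming only from the two applications of Proposition \ref{prop:derivativeAhernRudin} on $\CC^2$, each contributing $2\log r$, doubled by the covering). Your remark that you would ``return to \eqref{LiftedParialDerivative1}--\eqref{LiftedParialDerivative2}'' to recover the affine $K(n)$ is therefore unnecessary and slightly misleading: your approach gives a constant $K$, which is a sharper conclusion.

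One small arithmetic point: your own accounting ($1$ from the factor $f$, $6$ from each of the two derivative terms, $2$ from the reciprocal of $f - f\circ I$) sums to $15$, not $14$. The paper's $14 = 3\cdot 2 + 4\cdot 2$ arises from its invariant/anti-invariant bookkeeping and is not reproduced by your formula. This is immaterial for the intended application in Section 5, where any fixed multiplicative constant would do, but as stated your argument proves the theorem with $15$ in place of $14$. The treatment of the ramification locus, the degenerate case $f - f\circ I \equiv 0$, and the descent of $\sigma_1,\sigma_2$ via removable singularities are all handled correctly.
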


\noindent The proof of Theorem~\ref{MainEstimate} contains several steps. First an observation.

\begin{remark}\label{InvariantObs}
Recall the involution map $I :D_p \to D_p$ defined by $I(x,y,z) = (y,x,z)$. Given a function $f \colon D_p \to \CC$ we can decompose this function in an $I$-invariant and an $I$-anti-invariant part by
\[f = \frac{f + f \circ I}{2} + \frac{f - f \circ I}{2} = f_\text{inv} + f_\text{anti}\;,\]
where $f_\text{inv} := \frac{f + f \circ I}{2}$ is $I$-invariant while $f_\text{anti} := \frac{f - f \circ I}{2}$ is anti-invariant under $I$. Clearly, the invariance respectively anti-invariance means that
\[f_\text{inv} \circ I = f_\text{inv}\]
respectively
\[f_\text{anti} \circ I = - f_\text{anti}.\]
For simplicity, in the following we will write \emph{invariant} and \emph{anti-invariant} instead of $I$-\emph{invariant} and $I$-\emph{anti-invariant}.
\end{remark}

\begin{lemma}\label{InvolutionNevanlinna}
The Nevanlinna characteristic of $f$ and $f \circ I$ is equal, i.e.
\[
\ncharah{f}{r} = \ncharah{f \circ I}{r}
\]
for all $f \in \mathcal{O}(D_p)$.
\end{lemma}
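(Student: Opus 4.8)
The plan is to use the explicit description of the Nevanlinna characteristic function developed just before this lemma, namely
\[
\ncharhol{1/f}{r} = \int_{rS^3 \setminus C} \sum_{j=1}^2 \log^+|f \circ s_j|^2 \, \eta_0,
\]
and observe that precomposing $f$ with the involution $I$ on $D_p$ merely permutes the two sheets of the covering $\pi$. More precisely, since $\pi \circ I = \pi$, the involution $I$ maps each fiber $\pi^{-1}(a,b) = \{(x,y,z),(y,x,z)\}$ to itself by swapping the two points. Hence $I \circ s_1$ and $I \circ s_2$ are again sections of $\pi$ over $\CC^2 \setminus C$ (possibly after adjusting the branch cut $C$, which has measure zero anyway with respect to $\eta_0$), and in fact $\{I \circ s_1, I \circ s_2\} = \{s_1, s_2\}$ up to possibly relabelling on each connected piece. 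Therefore $(f \circ I) \circ s_j = f \circ (I \circ s_j)$ runs through exactly the same two functions $f \circ s_1, f \circ s_2$ as $j$ ranges over $\{1,2\}$, so the integrand $\sum_{j=1}^2 \log^+|(f\circ I)\circ s_j|^2$ is pointwise equal to $\sum_{j=1}^2 \log^+|f \circ s_j|^2$.

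Carrying this out step by step: first I would note that $I$ preserves $D_p$ and satisfies $\pi \circ I = \id$ composed appropriately, i.e. $\pi(I(x,y,z)) = \pi(x,y,z)$, so $I$ is a deck transformation of the branched covering $\pi$ that fixes the base pointwise. Second, I would check that $C$ (and also $rS^3 \cap C$, of $\eta_0$-measure zero) can be chosen $I$-compatibly or simply ignored, so that over $\CC^2 \setminus C$ the deck group $\{\id, I\}$ acts freely and transitively on the two sheets. Third, I would conclude that the unordered pair $\{f \circ s_1, f \circ s_2\}$ and $\{(f\circ I)\circ s_1, (f\circ I)\circ s_2\}$ coincide, whence the two integrands agree everywhere on $rS^3 \setminus C$. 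Fourth, integrating gives $\ncharhol{1/(f\circ I)}{r} = \ncharhol{1/f}{r}$, and dividing by $r^3$ yields $\ncharah{f\circ I}{r} = \ncharah{f}{r}$.

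I do not expect a serious obstacle here; this is essentially a change-of-variables / relabelling argument. The only mild subtlety is bookkeeping with the branch cut $C$: one wants the covering restricted to $rS^3 \setminus C$ to be genuinely trivial with well-defined sections $s_j$, and one must make sure that applying $I$ keeps everything consistent on each connected component (the labelling $s_1 \leftrightarrow s_2$ may be swapped on different components, but since we sum over both sheets symmetrically this is irrelevant). Since $C \cap rS^3$ has $\eta_0$-measure zero, as already noted in the text, none of this affects the integral, and the identity follows cleanly.
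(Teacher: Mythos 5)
Your argument is correct. The paper's own proof is a one-line appeal to the fact that $\ncharhol{1/f}{r}$ is defined as an integral over $\partial D_p[r]\subset D_p$, with the $I$-invariance of $\tau$ (hence of $\partial D_p[r]$ and of $\eta = \dc\tau\wedge\d\dc\tau$) making the change of variables under $I$ immediate -- no branch cut or sheet bookkeeping is needed at all. You instead work with the pushed-down integral over $rS^3\setminus C$ and argue that $I$, being the nontrivial deck transformation of the unbranched double cover, permutes the sections $s_1,s_2$ componentwise so the symmetric sum $\sum_j\log^+|f\circ s_j|^2$ is unchanged; this is correct, including your observation that any relabelling of sheets across components of $\CC^2\setminus C$ is harmless because the sum is symmetric and $C\cap rS^3$ has $\eta_0$-measure zero. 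The two routes encode the same $I$-invariance; the intrinsic one on $D_p$ is slightly cleaner since it avoids $C$ entirely, while yours ties in directly with the explicit two-sheet formula derived just before the lemma.
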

\begin{proof}
This is a simple consequence of the definition of $\ncharah{f}{r}$ resp. $\ncharhol{1/f}{r}$ as an integral over $D_p$.
\end{proof}

\noindent The following lemma is trivial:

\begin{lemma}\label{InvariantLemma}
Every vector field $\theta$ on $D_p$ which is a lift from a vector field on $\CC^2$ is invariant under the involution $I$, i.e. $I_*\theta = \theta$.
\end{lemma}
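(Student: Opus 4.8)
The plan is to reduce everything to the single structural identity $\pi\circ I=\pi$, which is immediate from $\pi(x,y,z)=(x+y,z)$ and $I(x,y,z)=(y,x,z)$ (note also that $I$ is a linear involution preserving $\{xy=p(z)\}$, so it is a biholomorphism of $D_p$). From $\pi\circ I=\pi$ I will show that $I_*\theta$ is again a lift of the same vector field on $\CC^2$ as $\theta$, and then use that the lift is unique on the open dense unbranched locus of $\pi$ to conclude $I_*\theta=\theta$ there, hence everywhere.

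In detail, let $V$ be the vector field on $\CC^2$ that $\theta$ lifts, so $d\pi_q(\theta_q)=V_{\pi(q)}$ for every $q\in D_p$ away from the poles of $\theta$. Differentiating $\pi\circ I=\pi$ gives $d\pi_{I(q)}\circ dI_q=d\pi_q$, whence
\[
d\pi_{I(q)}\bigl(dI_q(\theta_q)\bigr)=d\pi_q(\theta_q)=V_{\pi(q)}=V_{\pi(I(q))},
\]
the last equality again by $\pi\circ I=\pi$. So $dI_q(\theta_q)\in T_{I(q)}D_p$ has the same image under $d\pi_{I(q)}$ as $\theta_{I(q)}$. Recall from Section~\ref{NevanDaniel} that $\pi$ is a $2$-sheeted branched covering which is unbranched over the complement of the hypersurface $\{p(b)=a^2/4\}$; over that dense open set $d\pi$ is a linear isomorphism on each tangent space. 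Hence for $q$ outside the analytic branch locus, injectivity of $d\pi_{I(q)}$ forces $dI_q(\theta_q)=\theta_{I(q)}$, which (using $I\circ I=\id$) is exactly the statement $(I_*\theta)_q=\theta_q$. Since $\theta$ and $I_*\theta$ are meromorphic vector fields on the connected manifold $D_p$ agreeing on a dense open subset, the identity theorem gives $I_*\theta=\theta$ on all of $D_p$.

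There is no genuine obstacle; the only minor point of care is that $\theta_1,\theta_2$ are merely meromorphic, so the equality is first obtained on the unbranched locus and then propagated. Alternatively, one can dispense with this argument entirely and read off invariance directly from \eqref{LiftedParialDerivative1} and \eqref{LiftedParialDerivative2}: under the substitution $x\leftrightarrow y$, $\partial/\partial x\leftrightarrow\partial/\partial y$, $z\mapsto z$, the coefficients $x/(x-y)$ and $-y/(x-y)$ are interchanged and $p'(z)/(y-x)$ changes sign in step with $\partial/\partial x-\partial/\partial y$, so both $\theta_1$ and $\theta_2$ are carried to themselves. Either route establishes the lemma.
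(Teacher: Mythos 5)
Your argument is correct; note that the paper itself offers no proof at all, simply declaring Lemma~\ref{InvariantLemma} trivial, and your second route (checking from \eqref{LiftedParialDerivative1} and \eqref{LiftedParialDerivative2} that the coefficient swap $x\leftrightarrow y$, $\partial/\partial x\leftrightarrow\partial/\partial y$ fixes $\theta_1$ and $\theta_2$) is precisely the computation the authors implicitly have in mind. Your first route via $\pi\circ I=\pi$, injectivity of $d\pi$ off the ramification locus $\{x=y\}$, and the identity theorem is also sound and has the small advantage of covering lifts of arbitrary vector fields on $\CC^2$, not just the partial derivatives.
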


Given a function $f \colon D_p \to \CC$, decompose it as in Observation~\ref{InvariantObs}, so $f = f_\text{inv} + f_\text{anti}$. As every lifted vector field $\theta$ on $D_p$ is invariant by Lemma~\ref{InvariantLemma}, we get that $\theta(f_\text{inv})$ is invariant and that $\theta(f_\text{anti})$ is anti-invar\-iant. Therefore, by linearity and property \ref{Nevanlinnaproperty1} in Lemma~\ref{lem:holNevanlinnaproperties} we get
\begin{multline}\label{VectorfieldSplit}
\ncharah{\theta(f)}{r} = \ncharah{\theta(f_\text{inv}) + \theta(f_\text{anti})}{r} \\
 \leq \ncharah{\theta(f_\text{inv})}{r} + \ncharah{\theta(f_\text{anti})}{r} + C.
\end{multline}

By expression~\eqref{VectorfieldSplit} we need to estimate the invariant and anti-invariant part separately to prove Theorem~\ref{MainEstimate}. We prove these different cases in two lemmas.

\begin{lemma}\label{InvariantResult}
Let $\theta$ be a lift of a partial derivative on $\CC^2$ and let $f : D_p \to \CC$ be an invariant holomorphic function. Then
\[
\ncharah{\theta(f)}{r} \leq 3 \ncharah{f}{r} + 4 \log(r)
\]
for all $r$ outside a set of finite linear measure.
\end{lemma}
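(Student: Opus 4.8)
The plan is to observe that an invariant holomorphic function on $D_p$ is nothing but the pullback under $\pi$ of a holomorphic function on $\CC^2$, and then to quote the Ahern--Rudin estimate, Proposition~\ref{prop:derivativeAhernRudin}, directly downstairs.

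\emph{Descent to $\CC^2$.} First I would show that an invariant holomorphic $f \colon D_p \to \CC$ can be written $f = \tilde f \circ \pi$ with $\tilde f \colon \CC^2 \to \CC$ holomorphic. Over $\CC^2 \setminus C$ the two sheets $s_1(a,b)$ and $s_2(a,b)$ of $\pi$ differ by a transposition of the $x$- and $y$-coordinates, i.e.\ $s_2 = I \circ s_1$; since $f \circ I = f$, the functions $f \circ s_1$ and $f \circ s_2$ coincide, so $f$ descends to the holomorphic function $\tilde f := f \circ s_1$ on $\CC^2 \setminus C$. As $\pi$ is proper and $f$ continuous, $\tilde f$ is locally bounded near the ramification curve $\{p(b) = a^2/4\}$; Riemann's removable-singularity theorem then extends $\tilde f$ to a holomorphic function on all of $\CC^2$, and $f = \tilde f \circ \pi$ holds everywhere by continuity.

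\emph{Pulling the estimate back.} If $\theta$ lifts the coordinate partial derivative $\partial_j$ on $\CC^2$, then $\d\pi(\theta) = \partial_j$ away from the ramification locus, hence $\theta(f) = \theta(\tilde f \circ \pi) = (\partial_j \tilde f) \circ \pi$ there; since the right-hand side is holomorphic everywhere, this identity extends to all of $D_p$, and in particular $\theta(f)$ is holomorphic even though $\theta$ itself is merely meromorphic (its poles along $\{x = y\}$ being killed by the invariance of $f$), so $\ncharah{\theta(f)}{r}$ makes sense. The very computation that produced the formula $\ncharhol{1/h}{r} = \int_{rS^3 \setminus C} \sum_{j=1}^2 \log^+|h \circ s_j|^2 \, \eta_0$ shows, applied to $h = g \circ \pi$, that $\ncharah{g \circ \pi}{r} = 2\, \ncharah{g}{r}$ for every holomorphic $g$ on $\CC^2$ (the same factor of $2$ appearing in the proof of Lemma~\ref{lem:coordinates}). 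Applying this twice and then Proposition~\ref{prop:derivativeAhernRudin} with $n = 2$ gives
\[
\ncharah{\theta(f)}{r} = 2\,\ncharah{\partial_j \tilde f}{r} \le 2\bigl(3\,\ncharah{\tilde f}{r} + 2\log r\bigr) = 3\,\ncharah{f}{r} + 4\log r ,
\]
valid outside whatever exceptional set of finite linear measure Proposition~\ref{prop:derivativeAhernRudin} carries.

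\emph{Main obstacle.} The only step requiring real care is the descent: one must be sure $\tilde f$ is genuinely holomorphic across the branch curve, not merely continuous or weakly holomorphic. This is exactly the content of the boundedness argument together with Riemann extension above; once that is in place, the rest is the covering bookkeeping and the constant count, both routine.
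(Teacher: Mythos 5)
Your proof is correct and follows essentially the same route as the paper's: descend the invariant function $f$ to a holomorphic $\tilde f$ on $\CC^2$ via the two-sheeted covering $\pi$, use the relation $\ncharah{g \circ \pi}{r} = 2\,\ncharc{g}{r}$, identify $\theta(f)$ with $\pi^*(\partial_j\tilde f)$, and invoke Proposition~\ref{prop:derivativeAhernRudin} with $n=2$. You are in fact somewhat more careful than the paper's proof in justifying that $\tilde f$ is genuinely holomorphic (the paper simply declares $\tilde f := f \circ \pi^{-1}$ holomorphic) and in noting that the poles of the meromorphic field $\theta$ along $\{x=y\}$ are cancelled by the invariance of $f$.
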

\begin{proof}
The function $f : D_p \to \CC$ defines a holomorphic function $\tilde{f} : \CC^2 \to \CC$ by
\[
\tilde{f}(a, b) := f(\pi^{-1}(a, b))
.\]
Then
\begin{equation}
\label{DanToC2Obs}
\ncharah{f}{r} = 2 \ncharc{\tilde{f}}{r},
\end{equation}
where $\ncharc{\cdot}{\cdot}$ refers to the Nevanlinna characteristic function on the Danielewski surface $\CC^2$.
Indeed, we have
\begin{multline*}
\ncharc{\tilde{f}}{r} = \int_{S^3} \log^+ |f(\pi^{-1}(r\zeta))|d \sigma \\
 = \int_{\bar x \in \pi^{-1}(rS^3)} \log^+|f(\bar x)| d(\pi^*\sigma)(\bar x) = \frac{1}{2} \ncharah{f}{r}.
\end{multline*}
By Proposition \ref{prop:derivativeAhernRudin} we know that
\[
\ncharc{\frac{\partial \tilde{f}}{\partial z_i}}{r} \leq 3 \ncharc{\tilde{f}}{r} + 2\log(r)
\]
for $r$ outside a set of finite linear measure. Assume that $\theta$ is a lift of a partial derivative, then $\theta(f)$ is invariant by the discussion after Lemma~\ref{InvariantLemma} and $\theta(f) = \pi^*\left(\frac{\partial \tilde{f}}{\partial z_i}\right)$. By expression~\eqref{DanToC2Obs} we get that
\begin{equation}
\label{EndOfProof1}
\ncharah{\theta(f)}{r} = 2 \cdot \ncharc{\frac{\partial \tilde{f}}{\partial z_i}}{r} \leq 6 \cdot \ncharc{\tilde{f}}{r} + 4\log(r)
\end{equation}
by Proposition \ref{prop:derivativeAhernRudin}, and $r$ outside a set of finite linear measure. Going back to the characteristic function on $D_p$ we get that the right hand side of inequality~\eqref{EndOfProof1} equals
\[
3 \cdot\ncharah{f}{r} + 4\log(r).
\]
Hence the lemma is proved.
\end{proof}

\begin{lemma}\label{AntiInvariantResult}
Let $\theta$ be a lift of a partial derivative on $\CC^2$ and let $f \colon D_p \to \CC$ be an anti-invariant holomorphic function. Then
\[
\ncharah{\theta(f)}{r} \leq 4 \ncharah{f}{r} + E(n) \cdot \log(r) + F
\]
for big $r$ outside a set of finite linear measure, where $E(n)$ is an affine polynomial and $F$ is a constant.
\end{lemma}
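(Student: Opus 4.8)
The plan is to use anti-invariance to factor $f$ through the ramification divisor of $\pi$ and thereby reduce everything to the invariant case already settled in Lemma~\ref{InvariantResult}. Write $q := x - y$, which is anti-invariant because $q \circ I = -q$, and note that on $D_p$ one has $q^2 = (x+y)^2 - 4p(z)$, a polynomial in the coordinate functions. The zero divisor of $q$ is precisely the fixed locus $\{x = y\}$ of $I$, and I first check that this divisor is \emph{reduced}: the branch curve of $\pi$ in $\CC^2$ is $\{a^2 = 4p(b)\}$, whose gradient $(2a,\, -4p'(b))$ can vanish only where $a = 0$ and $p'(b) = 0$, which is impossible on the curve since $p$ has only simple zeros. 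Hence near any ramification point we may take $a^2 - 4p(b)$ as one of the coordinates on $\CC^2$, so that $D_p$ is locally the double cover $\{s^2 = a^2 - 4p(b)\}$ and $s = x - y = q$ is itself a local coordinate on $D_p$. Consequently $\{q = 0\}$ is smooth of multiplicity one; since an anti-invariant holomorphic $f$ vanishes on it ($f = -f$ there), we obtain $f = q \cdot h$ with $h$ holomorphic, and $h = f/q$ is invariant because $f$ and $q$ are both anti-invariant.

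Now apply the derivation $\theta$: by the Leibniz rule $\theta(f) = \theta(q)\, h + q\, \theta(h)$. A direct computation from \eqref{LiftedParialDerivative1} and \eqref{LiftedParialDerivative2} gives $\theta_1(q) = (x+y)/q$ and $\theta_2(q) = -2p'(z)/q$; in either case (and for any constant-coefficient combination) $\theta(q) = c/q$, where $c$ is a polynomial in the coordinate functions that is invariant under $I$. By Corollary~\ref{cor:polycomp}, Lemma~\ref{lem:coordinates} and the elementary properties of $\ncharah{\cdot}{r}$, all of $\ncharah{c}{r}$, $\ncharah{q}{r} = \tfrac12\ncharah{q^2}{r} + O(1)$, and hence $\ncharah{1/q}{r} \le \ncharah{q}{r} + O(1)$, are $O(n \log r)$, the implied coefficients being affine in $n$. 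From $f = qh$ and property~\ref{Nevanlinnaproperty3} of Lemma~\ref{lem:holNevanlinnaproperties} we also get $\ncharah{h}{r} \le \ncharah{f}{r} + \ncharah{1/q}{r} + O(1) = \ncharah{f}{r} + O(n\log r)$.

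It remains to bound the two summands of $\theta(f)$ with Lemma~\ref{lem:holNevanlinnaproperties}. Since $h$ is invariant holomorphic, Lemma~\ref{InvariantResult} applies and gives $\ncharah{\theta(h)}{r} \le 3\ncharah{h}{r} + 4\log r$ for $r$ outside a set of finite linear measure; combining with the estimate for $\ncharah{h}{r}$ yields $\ncharah{q\,\theta(h)}{r} \le \ncharah{q}{r} + \ncharah{\theta(h)}{r} + O(1) \le 3\ncharah{f}{r} + O(n\log r)$. For the other term, $\ncharah{\theta(q)\, h}{r} = \ncharah{c h/q}{r} \le \ncharah{c}{r} + \ncharah{h}{r} + \ncharah{1/q}{r} + O(1) \le \ncharah{f}{r} + O(n\log r)$. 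Adding the two via property~\ref{Nevanlinnaproperty1} of Lemma~\ref{lem:holNevanlinnaproperties} gives $\ncharah{\theta(f)}{r} \le 4\ncharah{f}{r} + E(n)\log(r) + F$ for big $r$ off a set of finite linear measure, with $E(n)$ affine in $n$ and $F$ a constant, which is the assertion. The general case of an arbitrary constant-coefficient lift of a partial derivative follows by linearity of $\theta \mapsto \theta(f)$ and subadditivity of $\ncharah{\cdot}{r}$.

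The main obstacle is the factorization in the first paragraph: one has to know that $\{x=y\}$ is a \emph{reduced} divisor on $D_p$ --- equivalently that $q = x - y$ has only simple zeros along it --- so that anti-invariance upgrades the mere vanishing of $f$ on the ramification locus to an honest holomorphic factorization $f = q h$ with $h$ holomorphic, and then automatically invariant. This is precisely the point where the standing hypothesis that $p$ has only simple zeros is used. Everything after that is an exercise in the elementary calculus of $\ncharah{\cdot}{r}$ together with one invocation of the invariant case, and the constant $4$ appears exactly because $\theta(f)$ splits into a term of ``order $3$ in $f$'' (coming from $q\,\theta(h)$ and Lemma~\ref{InvariantResult}) plus a term of ``order $1$ in $f$'' (coming from $\theta(q)\,h$).
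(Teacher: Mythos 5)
Your proposal is correct, and it does arrive at the same bound $4\,\ncharah{f}{r} + O(n\log r)$, but it takes a genuinely different route from the paper's.

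The paper's proof goes in the opposite direction. Instead of dividing $f$ by $q = x - y$, the paper multiplies: since $f$ is anti-invariant, $(x-y)f$ is automatically invariant, so Lemma~\ref{InvariantResult} applies to $(x-y)f$ directly. Then Leibniz gives $\theta((x-y)f) = f\,\theta(x-y) + (x-y)\theta(f)$, which is solved for $\theta(f) = \bigl(\theta((x-y)f) - f\,\theta(x-y)\bigr)/(x-y)$, and the rest is the same bookkeeping with $\ncharah{\cdot}{\cdot}$: one picks up $\ncharah{1/(x-y)}{r}$, $3\,\ncharah{(x-y)f}{r} + 4\log r$ from the invariant case applied to $(x-y)f$, and $\ncharah{f}{r} + \ncharah{\theta(x-y)}{r}$, for a total of $4\,\ncharah{f}{r} + O(n\log r)$. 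The decisive advantage of that route is that it never needs $f$ itself to be divisible by $x-y$: the anti-invariance is only used to see that $(x-y)f$ is invariant, which is a one-line algebraic check. Your route, by contrast, hinges on the extra geometric input that $\{x = y\}$ is a \emph{reduced} divisor on $D_p$ (which you correctly derive from the hypothesis that $p$ has simple zeros, via smoothness of the branch curve $\{a^2 = 4p(b)\}$), so that the vanishing of $f$ on the fixed locus upgrades to a holomorphic factorization $f = q\,h$ with $h$ invariant; Lemma~\ref{InvariantResult} is then applied to $h$ rather than to $qf$. This is more conceptual --- it exposes the geometry of the ramification --- at the cost of an additional step that the paper sidesteps entirely.

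Two small remarks on your write-up. First, the identity $\ncharah{q}{r} = \tfrac12\ncharah{q^2}{r} + O(1)$ is not justified for the proximity-type characteristic $\ncharah{\cdot}{\cdot}$ (only $\leq$ is immediate from property~\ref{Nevanlinnaproperty2}); you don't actually need it, since $\ncharah{q}{r} \leq \ncharah{x}{r} + \ncharah{y}{r} + O(1) = O(n\log r)$ follows at once from Lemma~\ref{lem:coordinates}. Second, for $\theta_2$ you wrote $\theta_2(q) = -2p'(z)/q$, whereas from~\eqref{LiftedParialDerivative2} one gets $\theta_2(x-y) = \frac{p'(z)}{y-x}\cdot 2 = -\frac{2p'(z)}{x-y}$, which is the same thing --- just noting the sign so the reader isn't thrown off. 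Neither point affects the conclusion.
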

\begin{proof}
Let $f : D_p \to \CC$ be an anti-invariant holomorphic function. Then $(x-y) f$ is invariant. Let $\theta$ be a lift of a partial derivative, and therefore $\theta((x-y)f)$ is invariant by the discussion after Lemma~\ref{InvariantLemma}. By Lemma~\ref{InvariantResult} we get that
\begin{equation}
\label{ThetaEstimateAgain}
\ncharah{\theta((x-y)f)}{r} \leq 3 \cdot \ncharah{(x-y)f}{r} + 4 \log(r)
\end{equation}
for all $r$ outside a set of finite linear measure. Since $\theta$ is a vector field it fulfils Leibniz rule.
\begin{equation}
\label{LeibnizVectorfield}
\theta((x-y)f) = f \cdot \theta(x-y) + (x-y) \cdot \theta(f)
\end{equation}
We get, using property (\ref{Nevanlinnaproperty2}) in Lemma~\ref{lem:holNevanlinnaproperties}, that
\begin{multline}
\label{EqAnti1}
\ncharah{\theta(f)}{r} = \ncharah{\frac{(x-y)\theta(f)}{x-y}}{r}
 \leq \ncharah{\frac{1}{x-y}}{r} + \ncharah{(x-y)\theta(f)}{r}
\end{multline}
Now use (\ref{Nevanlinnaproperty3}) in Lemma~\ref{lem:holNevanlinnaproperties}. Then we get that the right hand side of \eqref{EqAnti1} is less or equal to
\begin{multline}
\label{EqAnti2}
\ncharah{x-y}{r} + \ncharah{(x-y)\theta(f)}{r} + C \\
 = \ncharah{x-y}{r} + \ncharah{\theta((x-y)f) - f \cdot \theta(x-y)}{r} + C
\end{multline}
where we in the last equality used expression~\eqref{LeibnizVectorfield}. Using properties (\ref{Nevanlinnaproperty1}) and (\ref{Nevanlinnaproperty2}) in Lemma~\ref{lem:holNevanlinnaproperties} we get that the right hand side of \eqref{EqAnti2} is less or equal to
\begin{equation}
\label{EqAnti3}
\ncharah{x-y}{r} + \ncharah{\theta((x-y)f)}{r} + \ncharah{f}{r}
 + \ncharah{\theta(x-y)}{r} + \tilde{C}.
\end{equation}
By equation~\eqref{ThetaEstimateAgain} we get that the right hand side of \eqref{EqAnti3} is less or equal to
\begin{multline}
\label{EqAnti4} 
\ncharah{x-y}{r} + 3 \cdot \ncharah{(x-y)f}{r} + 4 \cdot \log(r) + \ncharah{f}{r} \\
 + \ncharah{\theta(x-y)}{r} + \tilde{C}.
\end{multline}
for all $r$ outside a set of finite linear measure. Using \ref{Nevanlinnaproperty2} from Lemma~\ref{lem:holNevanlinnaproperties} we get that the right hand side of \eqref{EqAnti4} is less or equal to
\begin{equation}
\label{EqAnti5}
4\cdot\ncharah{x-y}{r} + 4 \cdot \log(r) + 4\cdot\ncharah{f}{r} + \ncharah{\theta(x-y)}{r} + \tilde{C}.
\end{equation}
By (\ref{Nevanlinnaproperty2}) in Lemma~\ref{lem:holNevanlinnaproperties} and by Lemma~\ref{lem:coordinates} we get that the right hand side of \eqref{EqAnti5} is less or equal to
\begin{multline}
\label{EqAnti6}
4 \cdot (2n \cdot \log(r) + D) + 4 \cdot \log(r) + 4\cdot\ncharah{f}{r} + \ncharah{\theta(x-y)}{r} + \tilde{C} \\
 = (8n + 4) \cdot \log(r) + 4 \cdot \ncharah{f}{r} +\ncharah{\theta(x-y)}{r} + \tilde{D}
\end{multline}

We need to estimate $\ncharah{\theta(x-y)}{r}$, so we consider two cases, namely $\theta = \theta_1$ or $\theta=\theta_2$ from equation~\eqref{LiftedParialDerivative1} and equation~\eqref{LiftedParialDerivative2}. When $\theta=\theta_1$ we get that
\begin{equation}
\label{EqAnti7}
\ncharah{\left( \frac{x}{x-y} \frac{\partial}{\partial x} - \frac{y}{x-y}\frac{\partial}{\partial y}\right) (x-y)}{r}
 = \ncharah{\frac{x+y}{x-y}}{r}
\end{equation}
By property (\ref{Nevanlinnaproperty1}) and (\ref{Nevanlinnaproperty3}) in Lemma~\ref{lem:holNevanlinnaproperties} we get that the right hand side of \eqref{EqAnti7} is less or equal to
\begin{equation}
\label{Theta1Eq}
2\cdot\ncharah{x}{r} + 2\cdot\ncharah{y}{r} + E = 4n \cdot\log(r) + E
\end{equation}
where we in the last equality used Lemma~\ref{lem:coordinates} for big $r$ outside a set of finite linear measure. Combining equations \eqref{Theta1Eq} and \eqref{EqAnti6} we get, by following the chain of inequalities from expression~\eqref{EqAnti1}, that
\begin{equation}
\label{ThetaEq1}
\ncharah{\theta(f)}{r} \leq 4\cdot\ncharah{f}{r} + (12n+4)\cdot\log(r) + \tilde{E}
\end{equation}
for big $r$ outside a set of finite linear measure.

Continuing with the next case, we assume that $\theta=\theta_2$. Then we get that $\ncharah{\theta(x-y)}{r}$ equals
\begin{equation}
\label{EqAnti8}
\ncharah{\left( \frac{p'(z)}{y-x} \left( \frac{\partial}{\partial x} - \frac{\partial}{\partial y} \right) + \frac{\partial}{\partial z}  \right) (x-y)}{r} = \ncharah{\frac{2p'(z)}{y-x}}{r}
\end{equation}
Using property (\ref{Nevanlinnaproperty1}) and (\ref{Nevanlinnaproperty3}) in Lemma~\ref{lem:holNevanlinnaproperties}, yields that the right hand side of \eqref{EqAnti8} is less or equal to
\begin{equation}
\label{EqAnti9}
\ncharah{(2p'(z)}{r} + \ncharah{x}{r} + \ncharah{y}{r} + E = (3n-1) \cdot \log(r) + E
\end{equation}
where in the last equality we used Lemma~\ref{lem:coordinates} and Corollary~\ref{cor:polycomp} for big $r$ outside a set of finite linear measure. Expression~\eqref{EqAnti9} together with expression~\eqref{EqAnti6} yields, following the chain of inequalities from expression~\eqref{EqAnti1}, that
\begin{equation}\label{ThetaEq2}
\ncharah{\theta(f)}{r} \leq 4 \cdot \ncharah{f}{r} + (11n+3)\cdot \log(r) + F
\end{equation}
for big $r$ outside a set of finite linear measure. Hence, using the correct constants from expressions \eqref{ThetaEq1} and \eqref{ThetaEq2} the results is proved.
\end{proof}

\noindent We are now ready to prove Theorem~\ref{MainEstimate}.

\begin{proof}[Proof of Theorem~\ref{MainEstimate}]
By expression~\eqref{VectorfieldSplit} we have that
\begin{equation}\label{MainEq1}
\ncharah{\theta(f)}{r} \leq \ncharah{\theta(f_\text{inv})}{r} + \ncharah{\theta(f_\text{anti})}{r} + C.
\end{equation}
By Lemma~\ref{InvariantResult} and Lemma~\ref{AntiInvariantResult} we have that the right hand side of \eqref{MainEq1} is less or equal to
\begin{equation}
\label{MainEq2}
3\cdot \ncharah{f_\text{inv}}{r} + 4 \cdot \log(r)  + 4 \cdot \ncharah{f_\text{anti}}{r} + E(n) \cdot \log(r) + F
\end{equation}
for big $r$ outside a set of finite linear measure. As $f_\text{inv} = \frac{f + f \circ I}{2}$ we get
\begin{equation}
\label{MainEq3}
\ncharah{f_\text{inv}}{r} = \ncharah{\frac{f + f \circ I}{2}}{r}
 \leq \ncharah{\frac{f}{2}}{r} + \ncharah{\frac{f \circ I}{2}}{r} + C
\end{equation}
by property \ref{Nevanlinnaproperty1} in Lemma~\ref{lem:holNevanlinnaproperties}. Using Lemma~\ref{InvolutionNevanlinna} yields that the right hand side of inequality~\eqref{MainEq3} equals
\begin{equation}
\label{MainEq4}
2 \cdot \ncharah{f}{r} + G
\end{equation}
for some constant $G$, so expression~\eqref{MainEq1} together with expressions \eqref{MainEq2} and \eqref{MainEq3} yields
\begin{multline}
\label{MainEq5}
\ncharah{\theta(f)}{r} \\
 \leq 3(2 \cdot \ncharah{f}{r} + G) + 4 \log r  + 4 \cdot \ncharah{f_\text{anti}}{r} + E(n) \cdot \log(r) + F
\end{multline}
Using that $f_\text{anti} = \frac{f \circ I-f}{2}$ yields, by similar estimates as for $f_\text{inv}$, that
\begin{equation}
\label{MainEq6}
\ncharah{f_\text{anti}}{r} \leq 2 \cdot \ncharah{f}{r} + H
\end{equation}
Combining equations \eqref{MainEq5} and \eqref{MainEq6} gives
\begin{multline*}
\ncharah{\theta(f)}{r} \\
 \leq 3(2 \cdot \ncharah{f}{r} + G) + 4 \cdot \log(r)  + 4 (2\cdot \ncharah{f}{r} + H) + E(n) \cdot \log(r) + F \\
  = 14 \cdot \ncharah{f}{r} + K(n) + L
\end{multline*}
for big $r$ outside a set of finite linear measure, where $K(n)$ is an affine polynomial and $L$ is a constant.
\end{proof}

The above theorem will be used for estimating the volume change of an overshear. Recall from the work of Kaliman and Kutzschebauch in~\cite{KutzschebauchKaliman} that the Danielewski surfaces admit a unique up to a nonzero constant algebraic ``volume form'' $\omega$, in the sense that is in every point of maximal complex rank. In the open dense chart corresponding to local coordinates on $\CC^*_x \times \CC_z \to D_p$  defined by $(x,z) \mapsto (x, \frac{p(z)}{x}, z)$, yields that the volume form is given by $\omega = \frac{dx \wedge dz}{x}$ (thus fixing the constant, which is however not important for the calculation of volume change). The volume change of a self map
$F : D_p \to D_p$  of the Danielewski surface,
\[
F(x,y,z) = (u(x,y,z), v(x,y,z), w(x,y,z))
\]
is due to the local expression of $\omega$ given by
\begin{equation}
\label{Jacobi}
\left|\text{Jac}(F)\right| = \frac{x}{u(x,z)} \left( \frac{\partial u}{\partial x} \frac{\partial w}{\partial z} - \frac{\partial u}{\partial z} \frac{\partial w}{\partial x} \right).
\end{equation}
in coordinates $(x,z)$ in the above chart.
Thus we need to estimate the growth of the derivative $\frac{\partial f}{\partial x} $ and $\frac{\partial f}{\partial z} $ by the growth of $f$ itself. To do so we push forward the vector fields $\frac{\partial }{\partial x}$ and $\frac{\partial }{\partial z} $ from $\CC^*_x \times \CC_z$ to the Danielewski surface, express them as a linear combination (with meromorphic coefficients) of the
fields $\theta_1$ and $\theta_2$  from equations \eqref{LiftedParialDerivative1} and \eqref{LiftedParialDerivative2} and use the corresponding estimates.

The pushed forward fields are the meromorphic vector fields
\begin{equation}
\label{MeromorphicVF1}
\theta = \frac{\partial}{\partial x} - \frac{y}{x} \frac{\partial}{\partial y}
\end{equation}
and
\begin{equation}
\label{MeromorphicVF2}
\tilde{\theta} = \frac{p'(z)}{x}\frac{\partial}{\partial y} + \frac{\partial}{\partial z}.
\end{equation}

The corresponding linear combinations are
\[\theta = \frac{x-y}{x}\theta_1\]
and
\[\tilde{\theta} = \theta_2 + \frac{p'(z)}{x}\theta_1\]
where $\theta_1$ and $\theta_2$ are the vector fields from expressions \eqref{LiftedParialDerivative1} and \eqref{LiftedParialDerivative2}. Given a holomorphic function $f : D_p \to \CC$ we have
\begin{equation}
\label{MeromorphicEq1}
\ncharah{\theta(f)}{r} = \ncharah{\frac{x-y}{x}\theta_1(f)}{r} \leq \ncharah{1-\frac{y}{x}}{r} + \ncharah{\theta_1(f)}{r}
\end{equation}
by property \ref{Nevanlinnaproperty2} in Lemma~\ref{lem:holNevanlinnaproperties}. Using Theorem~\ref{MainEstimate}, properties \ref{Nevanlinnaproperty2} and \ref{Nevanlinnaproperty3} in Lemma~\ref{lem:holNevanlinnaproperties}, and using Lemma~\ref{lem:coordinates}, yields that the right hand side of \eqref{MeromorphicEq1} is less or equal to
\begin{multline*}
\ncharah{y}{r} + \ncharah{x}{r} + C + 14 \cdot \ncharah{f}{r} + K(n) \cdot \log(r) + L \\
 = 2n \cdot \log(r) + C + \ncharah{f}{r} + K(n) \cdot \log(r) + L
\end{multline*}
where $C$ is the constant from property \ref{Nevanlinnaproperty3} in Lemma~\ref{lem:holNevanlinnaproperties}, for big $r$ outside a set of finite linear measure. Putting $\tilde{L}=L+C$ and $\tilde{K}(n) = K(n) + 2n$ we get that
\begin{equation}
\label{ThetaEstimate}
\ncharah{\theta(f)}{r} \leq 14 \cdot \ncharah{f}{r}+ \tilde{K}(n) \cdot \log(r) + \tilde{L}.
\end{equation}
Similar calculations with $\tilde \theta$ instead of $\theta$ yields that
\begin{equation}
\label{ThetaTildeEstimate}
\ncharah{\tilde\theta(f)}{r} \leq 28 \cdot \ncharah{f}{r} + \tilde{K}(n) \cdot \log(r) + \tilde{L}.
\end{equation}

\noindent Thus we have proved

\begin{proposition}
\label{wichtig}
Given a holomorphic function $f : D_p \to \CC$, and a vector field $\theta$ which is the push forward of a partial derivative from the chart $\alpha : \CC^*_x \times \CC_z \to D_p$ given by $(x,z) \mapsto \left(x, \frac{p(z)}{x}, z\right)$, then we have the estimate
\[
\ncharah{\theta(f)}{r} \leq A \cdot \ncharah{f}{r} + K(n) \cdot \log(r)
\]
for big $r$ outside a set of finite linear measure, where $K(n), A$ and $L$ are constants.
In other words:  If we denote the function
$f \circ \alpha$ on the chart again by $f$, then
\[
\ncharah{\frac {\partial f}{\partial x}}{r} \leq A \cdot \ncharah{f}{r} + K(n) \cdot \log(r) + L
\]
and
\[
\ncharah{\frac {\partial f }{\partial z}}{r} \leq A \cdot \ncharah{f}{r} + K(n) \cdot \log(r) + L
\]
for big $r$ outside a set of finite linear measure.
\end{proposition}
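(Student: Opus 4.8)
The plan is to reduce the statement entirely to Theorem~\ref{MainEstimate} by writing the pushed-forward fields from the chart $\alpha$ as meromorphic linear combinations of the lifted partial derivatives $\theta_1$ and $\theta_2$ from \eqref{LiftedParialDerivative1} and \eqref{LiftedParialDerivative2}. Substituting $y = p(z)/x$ and computing the push-forward of $\partial/\partial x$ and $\partial/\partial z$ gives the identities $\theta = \frac{x-y}{x}\,\theta_1$ and $\tilde\theta = \theta_2 + \frac{p'(z)}{x}\,\theta_1$, already recorded before the statement. Hence for a holomorphic $f : D_p \to \CC$ one has $\theta(f) = \frac{x-y}{x}\,\theta_1(f)$ and $\tilde\theta(f) = \theta_2(f) + \frac{p'(z)}{x}\,\theta_1(f)$, and it suffices to estimate the Nevanlinna characteristic of the right-hand sides.

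For $\theta$ I would proceed as in the chain \eqref{MeromorphicEq1}: apply property~\ref{Nevanlinnaproperty2} of Lemma~\ref{lem:holNevanlinnaproperties} to the product $(1-\tfrac{y}{x})\,\theta_1(f)$ to split off $\ncharah{1-y/x}{r}$, then use properties~\ref{Nevanlinnaproperty1}, \ref{Nevanlinnaproperty2} and \ref{Nevanlinnaproperty3} to bound $\ncharah{1-y/x}{r}$ by $\ncharah{x}{r}+\ncharah{y}{r}+O(1)$, which is $O(\log r)$ by Lemma~\ref{lem:coordinates}. Since $\theta_1$ is a lift of a partial derivative, Theorem~\ref{MainEstimate} bounds $\ncharah{\theta_1(f)}{r}$ by $14\,\ncharah{f}{r}+K(n)\log r+L$ for big $r$ outside a set of finite linear measure, which yields \eqref{ThetaEstimate} with constant $14$.

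For $\tilde\theta$ the same scheme applies to $\theta_2(f)+\tfrac{p'(z)}{x}\,\theta_1(f)$, but now Theorem~\ref{MainEstimate} is invoked twice — once for $\theta_2(f)$ and once for the $\theta_1(f)$ appearing inside the product — so the leading constant doubles to $28$; the meromorphic coefficient $p'(z)/x$ is absorbed via Lemma~\ref{lem:holNevanlinnaproperties} together with Corollary~\ref{cor:polycomp} and Lemma~\ref{lem:coordinates}, since $\ncharah{p'(z)}{r}=\deg(p')\cdot\ncharah{z}{r}+O(1)=O(\log r)$. This gives \eqref{ThetaTildeEstimate}. Taking $A:=28$ then covers both fields simultaneously, and because $\theta$ (respectively $\tilde\theta$) is exactly the push-forward of $\partial/\partial x$ (respectively $\partial/\partial z$) under $\alpha$, rewriting $\ncharah{\theta(f)}{r}$ and $\ncharah{\tilde\theta(f)}{r}$ in the chart coordinates produces the two displayed estimates for $\partial f/\partial x$ and $\partial f/\partial z$.

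The substantive work has already been done in Theorem~\ref{MainEstimate}, so what remains here is organizational: the only point requiring care is the exceptional set. Theorem~\ref{MainEstimate} is valid only for $r$ outside a set of finite linear measure, and it is applied once (for $\theta$) or twice (for $\tilde\theta$); since a finite union of sets of finite linear measure again has finite linear measure, the final estimates hold outside such a set, as claimed. Everything else is the same routine manipulation of the subadditivity and reciprocal inequalities of Lemma~\ref{lem:holNevanlinnaproperties} that was carried out in the proof of Lemma~\ref{AntiInvariantResult}, so I would not spell out the constants beyond noting that they can be combined into a single affine polynomial $K(n)$ and a constant $L$.
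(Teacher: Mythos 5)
Your proposal matches the paper's own argument essentially line for line: the paper proves Proposition~\ref{wichtig} in the paragraph preceding the statement by writing $\theta = \frac{x-y}{x}\theta_1$ and $\tilde\theta = \theta_2 + \frac{p'(z)}{x}\theta_1$, splitting off the meromorphic coefficients via Lemma~\ref{lem:holNevanlinnaproperties} and Lemma~\ref{lem:coordinates}, and invoking Theorem~\ref{MainEstimate} once (giving $14$) for $\theta$ and twice (giving $28$) for $\tilde\theta$, exactly as you do. Your additional remark about combining exceptional sets of finite linear measure is a small but correct point that the paper leaves implicit.
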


We will also use another coordinate chart on the Danielewski surface. Namely, around a  point $(0, 0, z_0)$ with $ p(z_0) = 0$ we can use $x$ and $y$ as coordinates, since $p' (z_0) \ne 0$. We do not give an explicit formula, but the chart exists by the inverse function theorem. In such coordinates the volume form $\omega$ is given by  $\frac 1 {p'(z)} dx \wedge dy$ and to estimate the volume change in this chart we use the following proposition below which follows exactly as Proposition \ref{wichtig}.

\begin{proposition} \label{auchwichtig}
Given a holomorphic function $f : D_p \to \CC$, and identify  $f$ and its derivatives with the corresponding functions on the $(x,y)$-chart then the estimates
\[
\ncharah{\frac{\partial f}{\partial x}}{r} \leq C \cdot \ncharah{f}{r} + D(n)\cdot\log(r) + E
\]
and
\[
\ncharah{\frac{\partial f}{\partial y}}{r} \leq C \cdot \ncharah{f}{r} + D(n)\cdot\log(r) + E
\]
for big $r$ outside a set of finite linear measure and for some constants $C, D(n)$ and $E$.
\end{proposition}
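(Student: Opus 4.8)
The plan is to repeat the scheme used for Proposition~\ref{wichtig}, only with a different change of coordinates. First I would express the coordinate vector fields $\frac{\partial}{\partial x}$ and $\frac{\partial}{\partial y}$ of the $(x,y)$-chart as (meromorphic) vector fields on $D_p$, written as linear combinations with meromorphic coefficients of the lifted partial derivatives $\theta_1$ and $\theta_2$ from~\eqref{LiftedParialDerivative1} and~\eqref{LiftedParialDerivative2}, and then feed the result into Theorem~\ref{MainEstimate} together with the elementary Nevanlinna inequalities of Lemma~\ref{lem:holNevanlinnaproperties}.

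For the change of coordinates: on the chart around $(0,0,z_0)$ the coordinate $z$ is an implicit function of $(x,y)$ determined by $x y = p(z)$ (which is legitimate since $p'(z_0)\neq 0$, $p$ having only simple zeros), and differentiating gives $\frac{\partial z}{\partial x} = \frac{y}{p'(z)}$ and $\frac{\partial z}{\partial y} = \frac{x}{p'(z)}$. Acting on the ambient coordinate functions $x,y,z$ restricted to $D_p$, and solving the resulting small linear system against the formulas for $\theta_1,\theta_2$, one obtains the clean identities
\[
\frac{\partial}{\partial x} = \theta_1 + \frac{y}{p'(z)}\,\theta_2,
\qquad
\frac{\partial}{\partial y} = \theta_1 + \frac{x}{p'(z)}\,\theta_2 .
\]

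Next, for a holomorphic $f : D_p \to \CC$, I would write $\frac{\partial f}{\partial x} = \theta_1(f) + \frac{y}{p'(z)}\,\theta_2(f)$ and apply, in this order: properties~\ref{Nevanlinnaproperty1} and~\ref{Nevanlinnaproperty2} of Lemma~\ref{lem:holNevanlinnaproperties} to split the sum and the product; Theorem~\ref{MainEstimate} to bound each of $\ncharah{\theta_1(f)}{r}$ and $\ncharah{\theta_2(f)}{r}$ by $14\,\ncharah{f}{r} + K(n)\log r + L$ for big $r$ outside a set of finite linear measure; and property~\ref{Nevanlinnaproperty3} together with Corollary~\ref{cor:polycomp} and Lemma~\ref{lem:coordinates} to bound $\ncharah{\frac{y}{p'(z)}}{r} \le \ncharah{y}{r} + \ncharah{p'(z)}{r} + O(1) = O(\log r)$. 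Collecting terms yields $\ncharah{\frac{\partial f}{\partial x}}{r} \le C\cdot\ncharah{f}{r} + D(n)\log r + E$ with $C = 28$ (the precise value is immaterial) and $D(n)$ an affine polynomial in $n$; the estimate for $\frac{\partial f}{\partial y}$ is the same computation with $x$ replacing $y$ in the coefficient, and the exceptional radii still form a set of finite linear measure, being a finite union of such sets.

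I do not expect a genuine obstacle here beyond Theorem~\ref{MainEstimate} itself --- this is why one can say it ``follows exactly as Proposition~\ref{wichtig}''. The only points requiring attention are bookkeeping ones: one must identify the correct linear combination above, and one should note that the vector fields $\theta_1 + \frac{y}{p'(z)}\theta_2$ and $\theta_1 + \frac{x}{p'(z)}\theta_2$ are only meromorphic on $D_p$ (with poles along $\{x=y\}$ and $\{p'(z)=0\}$). This causes no difficulty, since $\frac{\partial f}{\partial x}$ and $\frac{\partial f}{\partial y}$ are honest holomorphic functions on the chart and the inequalities invoked above apply to the relevant functions on $D_p$ exactly as in the proof of Proposition~\ref{wichtig}.
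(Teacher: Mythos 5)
Your proposal is correct and follows the route the paper itself intends: the paper omits the argument, remarking only that it ``follows exactly as Proposition~\ref{wichtig},'' and you have supplied precisely that argument by writing the chart vector fields as linear combinations of $\theta_1$ and $\theta_2$ (the identities $\tfrac{\partial}{\partial x}=\theta_1+\tfrac{y}{p'(z)}\theta_2$ and $\tfrac{\partial}{\partial y}=\theta_1+\tfrac{x}{p'(z)}\theta_2$ check out directly from \eqref{LiftedParialDerivative1}--\eqref{LiftedParialDerivative2}) and then invoking Theorem~\ref{MainEstimate}, Lemma~\ref{lem:holNevanlinnaproperties}, Corollary~\ref{cor:polycomp}, and Lemma~\ref{lem:coordinates} in the same order as in the derivation of \eqref{ThetaEstimate}--\eqref{ThetaTildeEstimate}. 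No gaps.
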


\section{Application to the Overshear Group}

In~\cite{AhernRudin}, Ahern and Rudin showed that the overshear group in $\CC^2$ is a free amalgamated product of the affine automorphisms and the elementary (or Jonquiere) automorphisms over their intersection. The analogous result for the polynomial automorphism group was shown by van der Kulk and Jung in~\cite{VanDerKulk} and~\cite{Jung}. In~\cite{MakarLimanov} the polynomial automorphism was determined in the sense that all its generators were given explicitly. Furthermore, Makar-Limanov showed, analogously to van der Kulk's and Jung's work, that $\text{Aut}_\text{pol}(D_p)$ has a structure of an amalgamated product. More precisely, he showed that
\begin{quote}
{\it Assume that $T_1$ is generated by shears $S_f$ and that $T_2$ is generated by shears $IS_fI$, where $f \in \CC[x]$ is a polynomial. Also consider $H$, the group generated by $H_\lambda(x,y,z)=(\lambda x, \lambda^{-1} y, z)$, and $I_2$ the group generated by the involution $I(x,y,z)=(y,x,z)$. Then}
$$\text{Aut}_\text{pol}(D_p) = T_1 * T_2 \rtimes (H \rtimes I_2) \cong T_1 * T_2 \rtimes (\CC^* \rtimes \ZZ_2).$$
\end{quote}
In the paper~\cite{MakarLimanov2}, Makar-Limanov studied $D_p^n$ and then he showed that
$$\text{Aut}_\text{pol}(D_p^n) = \CC[x] \rtimes \CC^*.$$
Both of these results were proved in a more general setting in~\cite{MakarLimanov} and \cite{MakarLimanov2}. Here we have restricted ourselves to the special case then $p$ has simple zeros only, since we want to consider only manifolds.

Ahern and Rudin used the Nevanlinna characteristic to show a similar structure theorem for the overshear group of $\CC^2$. Following the outline of their proof we will prove the following theorem.

\begin{theorem}
\label{StructureTheorem}
Let $D_p$ be a Danielewski surface and assume that $\deg(p) \geq 4$, then the overshear group, $\text{OS}(D_p)$, is a free amalgamated product of $O_1$ and $O_2$, where $O_1$ is generated by $O_{f,g}$ and $O_2$ generated by $IO_{f,g}I$.
\end{theorem}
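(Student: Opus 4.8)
The plan is to follow the classical Ahern–Rudin strategy for proving that a group generated by two subgroups is a free amalgamated product: one must show that a reduced word, i.e. an alternating product $g_1 g_2 \cdots g_m$ with $g_i$ coming alternately from $O_1 \setminus (O_1 \cap O_2)$ and $O_2 \setminus (O_1 \cap O_2)$, cannot be the identity. The natural numerical invariant to control along such a word is a Nevanlinna characteristic function (or growth rate) of the coordinate functions of the composed automorphism, and the key input is Theorem~\ref{MainEstimate} (together with Propositions~\ref{wichtig} and \ref{auchwichtig}), which bounds the characteristic of a derivative linearly in terms of the characteristic of the function, and Proposition~\ref{prop:transcendental}, which says that composing with a transcendental function blows up the characteristic. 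First I would make explicit the intersection $O_1 \cap O_2$: an overshear $O_{f,g}$ lies in $O_2$ only when it is of a very restricted (essentially affine/diagonal) type, so that a genuine element of $O_1 \setminus (O_1 \cap O_2)$ has $z \mapsto z e^{xf(x)} + x g(x)$ with $xf(x)$ or $xg(x)$ nonconstant; I would then record the effect of such a map on the Nevanlinna characteristics $\ncharah{x}{r}$, $\ncharah{y}{r}$, $\ncharah{z}{r}$ of the three coordinate functions.

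The core of the argument is a growth dichotomy. Consider a reduced word $\Phi = g_m \circ \cdots \circ g_1$ and track the pair of quantities $\bigl(\ncharah{u}{r}, \ncharah{w}{r}\bigr)$ where $(u,w)$ are the first and third coordinates (in the chart $\alpha$). Applying a nontrivial element of $O_1$ multiplies the dominant characteristic by a factor related to the degree of the polynomial part, or — if $f$ is transcendental — forces the ratio of characteristics to infinity by Proposition~\ref{prop:transcendental}; applying a nontrivial element of $O_2$ (which is $IO_{h,k}I$) does the symmetric thing after swapping roles via Lemma~\ref{InvolutionNevanlinna}. Because $O_1$ and $O_2$ move the growth in \emph{transverse} directions — an $O_1$-step cannot be undone by an $O_2$-step, since after an $O_1$-step the characteristic of the coordinate that $O_2$ would need to exploit has already grown strictly — one shows by induction on the word length $m$ that $\Phi$ has strictly larger characteristic than the identity map, which has bounded characteristic in all coordinates. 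Hence $\Phi \neq \id$, which is exactly the criterion for a free amalgamated product. The role of the hypothesis $\deg(p) \geq 4$ is to guarantee (via Lemma~\ref{lem:coordinates}, where the ratio $n/2 \geq 2$) that the two coordinate directions $x$ and $z$ have genuinely different growth weights, so that the transversality of the two factors is strict rather than borderline, and the degree-$\geq 3$ hypothesis already assumed throughout does not suffice for the amalgamation structure.

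A few auxiliary points will need care. One must handle the transcendental case and the polynomial case of the functions $f, g$ uniformly: in the polynomial case Corollary~\ref{cor:polycomp} and Proposition~\ref{prop:Mohonko} give precise leading-order growth; in the transcendental case only the qualitative blow-up of Proposition~\ref{prop:transcendental} is available, but that is still enough to rule out cancellation. One must also be careful that the estimates in Theorem~\ref{MainEstimate} hold only ``for big $r$ outside a set of finite linear measure'' — the standard Nevanlinna device of passing to a sequence $r_k \to \infty$ avoiding the exceptional set handles this, since finitely many exceptional sets of finite linear measure still leave a cofinal sequence. Finally, one should verify that an element of $O_1 \cap O_2$ indeed acts with bounded-growth-preserving behaviour so that absorbing such elements into adjacent syllables does not disturb the induction; this is where the explicit description of the intersection from the first step is used. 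The main obstacle I expect is precisely the bookkeeping that shows an $O_1$-step and a subsequent $O_2$-step cannot conspire to return the growth vector to its previous value — i.e. making the ``transversality'' of the two subgroups quantitatively precise — which is the analogue of the degree-of-leading-term argument in the Ahern–Rudin and Jung–van der Kulk proofs, now complicated by the two-dimensional growth vector and the possibility of transcendental coefficient functions.
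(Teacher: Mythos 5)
Your overall strategy matches the paper's: apply the Ahern--Rudin method, showing that a nontrivial reduced word in $O_1 * O_2$ cannot be the identity by tracking the growth of a Nevanlinna characteristic and exploiting Theorem~\ref{MainEstimate}, Proposition~\ref{prop:transcendental}, and Proposition~\ref{prop:Mohonko}. However, the central technical device is missing, and the ``transversality'' you gesture at would not close the induction. The paper first conjugates (Lemma~\ref{SequenceLemma}) to put any reduced word in the form $I \circ O_{f_1,g_1} \circ I \circ \cdots \circ I \circ O_{f_n,g_n}$, so that the involution $I$ interleaves the overshears and the roles of the first and second coordinate are swapped at each step. This is precisely what makes a \emph{single scalar} invariant work: the ratio $\ncharah{u_k}{r}/\ncharah{v_k}{r}$ of the characteristics of the first and second coordinate of the partial composition. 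Lemma~\ref{lem:Step1} shows this ratio is $\geq 2$ after one step, Lemma~\ref{lem:Stepk} (using the Jacobian identity of Lemma~\ref{Jacobirelation} and the Jacobian estimate of Lemma~\ref{JacobiEstimateLemma}) shows it stays $> 1 + \varepsilon$ at every subsequent step, and the contradiction is that for the identity $\ncharah{x}{r}/\ncharah{y}{r} \to 1$ by Lemma~\ref{lem:coordinates}. Your proposed invariant is the pair $(\ncharah{u}{r},\ncharah{w}{r})$, i.e.\ first and third coordinate; but an $O_1$-overshear leaves $u$ fixed and changes only $v$ and $w$, while an $O_2$-overshear leaves $v$ fixed, so neither step moves that pair ``transversally'' in any immediate sense, and you do not supply the quantitative inequality that would propagate along the word. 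Finding the right invariant (the $u/v$-ratio, together with the $I$-conjugated normal form making it meaningful) \emph{is} the proof; it cannot be left as bookkeeping.

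Two smaller points: the intersection $O_1 \cap O_2$ is actually trivial (an element of $O_1$ fixes $x$ and one of $O_2$ fixes $y$; equality would force a function of $x$ alone to agree on $D_p$ with a function of $y$ alone, hence be constant, and evaluation at $x=0$ kills the overshear data), so the paper in fact proves $\text{OS}(D_p) = O_1 * O_2$ outright and your concern about absorbing nontrivial elements of the intersection into adjacent syllables does not arise. Also, the hypothesis $\deg(p) \geq 4$ does not enter via ``different growth weights of $x$ and $z$'' with $n/2 \geq 2$; it comes from the case analysis in Lemma~\ref{lem:Stepk}, where the inductive inequality $n - 3 + \delta/(1+\delta) > 1 + \varepsilon$ holds unconditionally for $n \geq 5$ and for $n = 4$ only because the base case supplies $\delta > 0$, while for $n = 3$ the argument fails (and the status of the theorem for $n=3$ is left open in Remark~\ref{ShearRemark}).
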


Assume throughout this section that $n=\deg(p) \geq 4$.

\noindent The starting point of the proof of Theorem~\ref{StructureTheorem} uses the following lemma:

\begin{lemma}\label{SequenceLemma}
Every composition of overshear mappings is conjugate to the form
\begin{equation}\label{SequenceOfTriangular}
I \circ O_{f_1,g_1} \circ I \circ O_{f_2,g_2} \circ I \circ \cdots \circ O_{f_n,g_n}
\end{equation}
or to $I$.
\end{lemma}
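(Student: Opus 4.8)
The plan is to follow the Ahern--Rudin scheme from $\CC^2$: take an arbitrary element of $\text{OS}(D_p)$, written as a finite word in the generators $O_{f,g}$ and $IO_{h,k}I$, and reduce it to the asserted shape using only the composition law, the relation $I^2=\id$, and a few conjugations.

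First I would record the structure of the two ``halves''. By the identity $O_{f,g}\circ O_{h,k}=O_{f+h,\,g e^{xh}+k}$, the overshears $O_{f,g}$ form a subgroup $O_1\le\text{OS}(D_p)$ with $\id=O_{0,0}$ and $O_{f,g}^{-1}=O_{-f,\,-g e^{-xf}}$, and conjugating by $I$ the maps $IO_{h,k}I$ form a subgroup $O_2=IO_1I$. Any composition of overshears is a product of elements of $O_1\cup O_2$; collapsing each maximal run of consecutive factors lying in the same subgroup yields an \emph{alternating} word $A_1A_2\cdots A_k$ with the $A_i$ lying, alternately, in $O_1\setminus\{\id\}$ and in $O_2\setminus\{\id\}$ (or else the word collapses to $\id$).

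Next I would cyclically reduce. If the extreme factors $A_1,A_k$ lie in the same subgroup (equivalently, $k$ is odd), conjugate by $A_k$ to get $A_kA_1A_2\cdots A_{k-1}$ and merge $A_kA_1$ into one element of that subgroup; this strictly shortens the word (by two if the merged element is trivial). Iterating, one is left, up to conjugation, with $\id$, or a single factor in $O_1$ or $O_2$, or an alternating word $B_1\cdots B_l$ with $l\ge 2$ whose extreme factors lie in different subgroups; conjugating once more by $B_l$ if necessary, we may assume $B_1\in O_2$ and $B_l\in O_1$. Now substitute $I\,O_{h,k}\,I$ for each $O_2$-factor: since two consecutive $O_1$-factors never occur, the result is a genuinely alternating string of copies of $I$ and elements of $O_1$ that begins with the $I$ contributed by $B_1$ and ends with $B_l\in O_1$, hence is literally of the form
\[
I\circ O_{f_1,g_1}\circ I\circ O_{f_2,g_2}\circ I\circ\cdots\circ O_{f_n,g_n}.
\]
The degenerate outcomes fit too: $\id$, a single $O_1$-factor $A$, and a single $O_2$-factor $IO_{h,k}I$ are written as $I\circ O_{0,0}\circ I\circ O_{0,0}$, $I\circ O_{0,0}\circ I\circ A$, and $I\circ O_{h,k}\circ I\circ O_{0,0}$; the pure involution $I$ is recorded as the separate alternative of the statement.

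The only real subtlety is the parity and endpoint bookkeeping. The target shape is not left--right symmetric — it must begin with $I$ and terminate with an $O_{f_i,g_i}\in O_1$ — so one must track carefully which subgroup each extreme factor sits in after every merge, and the ``odd-length'' obstruction to matching that shape is exactly what forces the separate alternative ``or to $I$''. Checking that every conjugating element used is itself an overshear (or $I$), and that the substitution $IO_{h,k}I$ never destroys the alternation, is then routine.
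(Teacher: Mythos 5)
Your argument is correct and essentially reproduces the paper's: both reduce an arbitrary word to the asserted shape by cyclic conjugation with the extreme factors, using that $O_1$ is a subgroup (from the composition law) and that $I^2=\id$; the paper does this via an explicit four-way case analysis on the possible starting and ending factors of the alternating word in $O$'s and $I$'s, while you organize the same reduction more cleanly via alternating words in the subgroups $O_1$ and $O_2$. One small imprecision to note: your reduction never actually \emph{produces} $I$ as a terminal outcome---in your bookkeeping $I$ only appears as the degenerate instance $I\circ O_{0,0}$ of the main form---whereas the paper's case analysis lands on $I$ concretely when the single remaining overshear factor collapses to the identity after the final conjugation; either convention serves the lemma's use in Theorem~\ref{StructureTheorem}, but you should state explicitly which one you intend rather than simply "recording" the alternative.
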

\begin{proof}
We can assume that we have one of the following composition of overshear mappings:
\begin{equation}\label{SequenceOfTriangular1}
O_{f_1,g_1} \circ I \circ O_{f_2,g_2} \circ I \circ \cdots \circ I \circ O_{f_{n-1},g_{n-1}} \circ I \circ O_{f_n,g_n}\;,
\end{equation}
\begin{equation}\label{SequenceOfTriangular2}
O_{f_1,g_1} \circ I \circ O_{f_2,g_2} \circ \cdots \circ I \circ O_{f_n,g_n} \circ I\;,
\end{equation}
\begin{equation}\label{SequenceOfTriangular3}
I \circ O_{f_1,g_1} \circ I \circ O_{f_2,g_2} \circ \cdots \circ I \circ O_{f_{n-1},g_{n-1}} \circ I \circ O_{f_n,g_n}\;,
\end{equation}
\begin{equation}\label{SequenceOfTriangular4}
I \circ O_{f_1,g_1} \circ I \circ O_{f_2,g_2} \circ \cdots \circ I \circ O_{f_n,g_n} \circ I.
\end{equation}
We handle each of these compositions separately. We start with
expression~\eqref{SequenceOfTriangular1}. If we first conjugate with
$O_{f_1,g_1}$ and then with $I$ we obtain
$$O_{f_2,g_2} \circ I \circ \cdots \circ I \circ O_{f_{n-1},g_{n-1}} \circ I \circ O_{f_n\cdot f_1,g_1\cdot f_n+g_n} \circ I.$$
This is the desired form, but if $f_n \cdot f_1 \equiv 1$ and $g_1 \cdot f_n+g_n \equiv 0$,
then $O_{f_n\cdot f_1,g_1\cdot f_n+g_n} = \mathrm{id}$, and we are back where we started, {\it i.e.}
$$O_{f_2,g_2} \circ I \circ \cdots \circ I \circ O_{f_{n-1},g_{n-1}}.$$
The worst case scenario is that after a finite number of
conjugations with $O_{f_k,g_k}$ and $I$ we end up with
$$O_{f_j,g_j} \circ I \circ O_{f_{j+1},g_{j+1}}\;,$$
for some $j$. If we conjugate one last time we get
$$O_{f_{j+1} \cdot f_j, g_j\cdot f_{j+1} + g_{j+1}} \circ I.$$
This gives the conclusion of the lemma even if $f_{j+1} \cdot f_j \equiv 1$ and $g_j\cdot f_{j+1} + g_{j+1} \equiv 0$.

Expression~\eqref{SequenceOfTriangular2} is already of the desired
form, so let us consider expression~\eqref{SequenceOfTriangular3}
instead. In this equation it is suffices to conjugate with $I$.
Finally, consider expression~\eqref{SequenceOfTriangular4}. If we
start to conjugate with $I$, then we get a sequence as in
expression~\eqref{SequenceOfTriangular1}, and this case was handled
above.
\end{proof}

Look at an overshear mapping composed with an involution mapping on the Danielewski surface $D_p$
\begin{equation}\label{StructureFunctionEq1}
(u_1,v_1,w_1) = \left( \frac{p(zf_1(x)+xg_1(x))}{x}, x, zf_1(x)+xg_1(x) \right)
\end{equation}
and at a finite composition of such mappings
\begin{multline}\label{StructureFunctionEq2}
(u_{k+1},v_{k+1},w_{k+1}) \\ = \left( \frac{p(w_kf_{k+1}(u_k)+u_kg_{k+1}(u_k))}{u_k}, u_k,  w_kf_{k+1}(u_k)+u_kg_{k+1}(u_k) \right)\;,
\end{multline}
where $k \geq 1$, $f_j \colon \CC \to \CC^*$ is  holomorphic and $f(0)=1$ (thus either constantly $1$ or transcendental), and $g_j \colon \CC \to \CC$ is holomorphic.

Consider the mapping in expression~\eqref{StructureFunctionEq1}. Since $u_1 \cdot v_1 = p(w_1)$ on $D_p$, we have that $u_1 = \frac{p(w_1)}{v_1} = \frac{p(w_1)}{x}$. By expression~\eqref{Jacobi} we get that the determinant of the Jacobian of $(u_1,v_1,w_1)$ equals
\begin{equation}\label{CalculateJacobi}
\frac{x^2}{p(w_1)} \left( \frac{\partial u_1}{\partial x} \frac{\partial w_1}{\partial z} - \frac{\partial u_1}{\partial z} \frac{\partial w_1}{\partial x} \right).
\end{equation}
Since $u_1 = \frac{p(w_1)}{x}$, then
\begin{equation}\label{ddx}
\frac{\partial u_1}{\partial x} = \frac{p'(w_1)\frac{\partial w_1}{\partial x} x - p(w_1)}{x^2}
\end{equation}
and
\begin{equation}\label{ddz}
\frac{\partial u_1}{\partial z} = \frac{p'(w_1)\frac{\partial w_1}{\partial z}}{x}.
\end{equation}
Inserting expressions \eqref{ddx} and \eqref{ddz} in \eqref{CalculateJacobi} yields that
\begin{equation}
\label{CalculateJacobi2}
\frac{x^2}{p(w_1)} \left( \frac{p'(w_1)\frac{\partial w_1}{\partial x} \frac{\partial w_1}{\partial z} x - p(w_1)\frac{\partial w_1}{\partial z}}{x^2} - \frac{p'(w_1)\frac{\partial w_1}{\partial z}\frac{\partial w_1}{\partial x}}{x}  \right)
 = -\frac{\partial w_1}{\partial z}.
\end{equation}
Thus the chain rule implies that
\begin{lemma}
\label{Jacobirelation}
The following relation holds:
\[
\frac{|\text{Jac}(u_{k+1}, v_{k+1}, w_{k+1})|}{|\text{Jac}(u_k, v_k, w_k)|} = - f_{k+1}(u_k)
\]
\end{lemma}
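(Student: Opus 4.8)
The plan is to deduce the relation from the chain rule for the Jacobian with respect to the invariant volume form $\omega$, combined with the single‑step computation \eqref{CalculateJacobi}--\eqref{CalculateJacobi2} already carried out above.

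First I would record the chain rule. For automorphisms $\Phi,\Psi$ of $D_p$, functoriality of the pullback gives $(\Psi\circ\Phi)^*\omega = \Phi^*(\Psi^*\omega)$; since $\omega$ vanishes nowhere we may write $\Psi^*\omega = \text{Jac}(\Psi)\,\omega$ for a nowhere‑vanishing holomorphic function $\text{Jac}(\Psi)$ on $D_p$, and then $\text{Jac}(\Psi\circ\Phi) = \big(\text{Jac}(\Psi)\circ\Phi\big)\cdot\text{Jac}(\Phi)$. Next I would observe that the map $(u_{k+1},v_{k+1},w_{k+1})$ of \eqref{StructureFunctionEq2} is precisely $\Phi_{k+1}\circ(u_k,v_k,w_k)$, where $\Phi_{k+1}=IO_{f_{k+1},g_{k+1}}$ is the single overshear-involution block of \eqref{StructureFunctionEq1} with $(f_1,g_1)$ replaced by $(f_{k+1},g_{k+1})$. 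Applying the chain rule and dividing then reduces the claim to showing that the value of the function $\text{Jac}(\Phi_{k+1})$ at the point $(u_k,v_k,w_k)$ equals $-f_{k+1}(u_k)$.

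For this I would simply repeat the computation \eqref{CalculateJacobi}--\eqref{CalculateJacobi2}, which was performed for $\Phi_1$ but uses $(f_1,g_1)$ only through the identity $w_1=zf_1(x)+xg_1(x)$: working in the chart $(x,z)\mapsto(x,p(z)/x,z)$, using the local expression \eqref{Jacobi} of the Jacobian together with $u=p(w)/x$ and the formulas \eqref{ddx}--\eqref{ddz}, all the $p'(w)$-terms and the $p(w)$-term cancel and one is left with $\text{Jac}(\Phi_{k+1}) = -\,\partial w/\partial z = -f_{k+1}(x)$, where $w=zf_{k+1}(x)+xg_{k+1}(x)$. Thus $\text{Jac}(\Phi_{k+1})$, as a function on $D_p$, is $-f_{k+1}$ composed with the first‑coordinate function; evaluating at $(u_k,v_k,w_k)$, whose first coordinate is $u_k$, yields $-f_{k+1}(u_k)$, which is the assertion.

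I do not expect a substantive obstacle here; the only care needed is bookkeeping. The chart $(x,z)\mapsto(x,p(z)/x,z)$ is defined only on the dense open set $\{x\neq0\}$ and $\Phi_{k+1}$ need not preserve it, but since $f_{k+1}$ takes values in $\CC^*$ the function $\text{Jac}(\Phi_{k+1})=-f_{k+1}(x)$ is holomorphic and nowhere vanishing, so the chain‑rule identity, which is an identity of holomorphic functions valid on a dense open set, holds everywhere; and one must note that $\text{Jac}(\Phi_{k+1})$ depends on its argument only through the $x$-coordinate, so the substitution $(x,y,z)\mapsto(u_k,v_k,w_k)$ produces exactly $-f_{k+1}(u_k)$, with no spurious dependence on $v_k$ or $w_k$.
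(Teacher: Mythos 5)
Your proposal is correct and follows essentially the same route as the paper: the paper computes the Jacobian of the single block $(u_1,v_1,w_1)=(I\,O_{f_1,g_1})(x,y,z)$ in the $(x,z)$-chart via \eqref{Jacobi}, finds $-\partial w_1/\partial z=-f_1(x)$, and then simply invokes the chain rule to pass to the general step. You have made that terse invocation precise — writing out the multiplicativity of the Jacobian with respect to $\omega$, identifying $(u_{k+1},v_{k+1},w_{k+1})$ as $\Phi_{k+1}\circ(u_k,v_k,w_k)$, and noting that $\text{Jac}(\Phi_{k+1})$ depends only on the $x$-coordinate, hence evaluates to $-f_{k+1}(u_k)$ — but the substance is identical.
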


An important step in the proof of Theorem~\ref{StructureTheorem} are the following lemmas.
The proofs of our two key lemmas uses estimates of the determinant of the Jacobian.

\begin{lemma}\label{JacobiEstimateLemma}
The functions from expressions \eqref{StructureFunctionEq1} and \eqref{StructureFunctionEq2} fulfil
\[
\ncharah{|\text{Jac}(u_k,v_k,w_k)|}{r} \leq A \cdot \ncharah{u_k}{r} + B \cdot \ncharah{v_k}{r} + C \cdot \log(r) + D
,\]
for $k \geq 1$, for certain constants $A, B, C, D$, for big $r$ outside a set of finite linear measure.
\end{lemma}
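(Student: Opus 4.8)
The plan is to prove Lemma~\ref{JacobiEstimateLemma} by induction on $k$, using Lemma~\ref{Jacobirelation} to relate the Jacobian at step $k+1$ to the one at step $k$, and controlling the new factor $f_{k+1}(u_k)$ via the composition estimates from Section~\ref{NevanDaniel2}. Since $|\text{Jac}(u_{k+1},v_{k+1},w_{k+1})| = |f_{k+1}(u_k)| \cdot |\text{Jac}(u_k,v_k,w_k)|$, property~\ref{Nevanlinnaproperty2} in Lemma~\ref{lem:holNevanlinnaproperties} gives
\[
\ncharah{|\text{Jac}(u_{k+1},v_{k+1},w_{k+1})|}{r} \leq \ncharah{f_{k+1}(u_k)}{r} + \ncharah{|\text{Jac}(u_k,v_k,w_k)|}{r} + O(1),
\]
so it suffices to bound $\ncharah{f_{k+1} \circ u_k}{r}$ by an affine combination of $\ncharah{u_{k+1}}{r}$, $\ncharah{v_{k+1}}{r}$ and $\log r$, and to feed the inductive hypothesis into the second term. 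For the base case $k=1$, expression~\eqref{CalculateJacobi2} shows $|\text{Jac}(u_1,v_1,w_1)| = |\partial w_1/\partial z|$ with $w_1 = zf_1(x)+xg_1(x)$, which we estimate directly with Proposition~\ref{wichtig} (the $(x,z)$-chart) together with Lemma~\ref{lem:coordinates} and property~\ref{Nevanlinnaproperty1}.

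The first key step is to handle the factor $f_{k+1}(u_k)$. If $f_{k+1}$ is constant (equal to $1$), then $\ncharah{f_{k+1}(u_k)}{r} = O(1)$ and there is nothing to do. If $f_{k+1}$ is transcendental, we must be careful: Proposition~\ref{prop:transcendental} says $\ncharah{f_{k+1}\circ u_k}{r}$ grows faster than $\ncharah{u_k}{r}$, so we cannot bound it by $\ncharah{u_k}{r}$. Instead I would exploit the structure of the map: $w_{k+1} = w_k f_{k+1}(u_k) + u_k g_{k+1}(u_k)$, hence $f_{k+1}(u_k) = (w_{k+1} - u_k g_{k+1}(u_k))/w_k$. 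But $g_{k+1}(u_k)$ is again a composition of a (possibly transcendental) entire function with $u_k$. The cleaner route is to use instead the other coordinate of the overshear: since $v_{k+1} = u_k$ and $u_{k+1} = p(w_{k+1})/u_k$, we have $u_k = v_{k+1}$, so by Corollary~\ref{cor:polycomp} and Lemma~\ref{lem:holNevanlinnaproperties}, $\ncharah{w_{k+1}}{r} \leq \tfrac{1}{?}\cdots$ — more precisely $\ncharah{u_{k+1}}{r} = \ncharah{p(w_{k+1})/u_k}{r}$ and by Proposition~\ref{prop:Mohonko} applied to $p$, $\ncharah{w_{k+1}}{r} = \tfrac{1}{n}\bigl(\ncharah{u_{k+1}}{r} + \ncharah{u_k}{r}\bigr) + O(1) = \tfrac{1}{n}\bigl(\ncharah{u_{k+1}}{r} + \ncharah{v_{k+1}}{r}\bigr) + O(1)$. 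Thus $\ncharah{w_{k+1}}{r}$, $\ncharah{u_k}{r}=\ncharah{v_{k+1}}{r}$ are all controlled by $\ncharah{u_{k+1}}{r}+\ncharah{v_{k+1}}{r}$, and via Proposition~\ref{auchwichtig} (the $(x,y)$-chart at a zero of $p$, where $x=v$ and $y=u$ up to bounded terms) the partial derivatives $\partial w_{k+1}/\partial x$, $\partial w_{k+1}/\partial y$ are bounded by $A\ncharah{w_{k+1}}{r} + D(n)\log r + E$, hence by an affine combination of $\ncharah{u_{k+1}}{r}$ and $\ncharah{v_{k+1}}{r}$ plus $\log r$. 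Since in that chart $|\text{Jac}|$ is $\tfrac{1}{p'(z)}$ times the Jacobian of $(x,y)\mapsto(u_{k+1},v_{k+1})$, and one column of that Jacobian is controlled by the derivatives of $u_{k+1}$ while $v_{k+1}=u_k$, I would express $|\text{Jac}(u_{k+1},v_{k+1},w_{k+1})|$ directly in terms of $\partial u_{k+1}/\partial x$, $\partial u_{k+1}/\partial y$ and $p'(w_{k+1})$, then apply Lemma~\ref{lem:holNevanlinnaproperties}, Corollary~\ref{cor:polycomp} and Proposition~\ref{auchwichtig}.

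The cleanest proof therefore bypasses the induction on the recursion and argues directly on $(u_k, v_k, w_k)$ for the given $k$: work in the $(x,y)$-chart near a zero of $p$, where $\omega = \tfrac{1}{p'(z)}\,dx\wedge dy$, so that
\[
|\text{Jac}(u_k,v_k,w_k)| = \frac{p'(w_k)}{p'(z)}\left(\frac{\partial u_k}{\partial x}\frac{\partial v_k}{\partial y} - \frac{\partial u_k}{\partial y}\frac{\partial v_k}{\partial x}\right),
\]
where $z$ is a bounded quantity on the relevant sphere slice. Apply property~\ref{Nevanlinnaproperty2} of Lemma~\ref{lem:holNevanlinnaproperties} to split this product, Corollary~\ref{cor:polycomp} to absorb $p'(w_k)$ into $n\ncharah{w_k}{r}+O(1)$, Proposition~\ref{auchwichtig} to bound each $\partial u_k/\partial x$, $\partial u_k/\partial y$, $\partial v_k/\partial y$, $\partial v_k/\partial x$ by $C\ncharah{u_k}{r}$ resp. $C\ncharah{v_k}{r}$ plus $D(n)\log r + E$, and finally use the relation $\ncharah{w_k}{r} = \tfrac{1}{n}(\ncharah{u_k}{r}+\ncharah{v_k}{r}) + O(1)$ coming from $u_k v_k = p(w_k)$ and Proposition~\ref{prop:Mohonko}. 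Collecting terms yields the claimed bound with explicit affine-in-$n$ constants. The main obstacle is organizational rather than conceptual: one must keep the charts straight (the $(x,z)$-chart is singular along $\{x=0\}$, the $(x,y)$-chart only covers a neighbourhood of a zero of $p$) and verify that the difference between working in a local chart and on all of $D_p$ only contributes bounded error terms — this is legitimate because, as noted in Section~\ref{NevanDaniel}, the automorphism group acts transitively so divisors can be assumed to avoid the logarithmic singularities, and the Nevanlinna characteristic is chart-independent up to $O(1)$ and $O(\log r)$.
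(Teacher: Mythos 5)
Your final paragraph is essentially the paper's proof: write $|\text{Jac}(u_k,v_k,w_k)|$ in the $(x,y)$-chart using $\omega = \frac{1}{p'(z)}\,dx\wedge dy$, split with Lemma~\ref{lem:holNevanlinnaproperties}, bound the four partial derivatives via Proposition~\ref{auchwichtig}, handle the $p'$-terms with Corollary~\ref{cor:polycomp} and Lemma~\ref{lem:coordinates}, and eliminate $\ncharah{w_k}{r}$ using $u_k v_k = p(w_k)$. Two minor slips --- the $p'$-ratio should be $p'(z)/p'(w_k)$ rather than its reciprocal, and $\deg p' = n-1$ gives $(n-1)\ncharah{w_k}{r}$, not $n\ncharah{w_k}{r}$ --- do not affect the bound, and you correctly abandon the inductive route through Lemma~\ref{Jacobirelation}, since a transcendental $f_{k+1}(u_k)$ cannot be dominated by $\ncharah{u_k}{r}$ by Proposition~\ref{prop:transcendental}.
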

\begin{proof}
As
\[
|\text{Jac}(u_k,v_k,w_k)| = \frac{p'(z)}{p'(w_k)} \left( \frac{\partial u_k}{\partial x} \frac{\partial v_k}{\partial y} - \frac{\partial u_k}{\partial y} \frac{\partial v_k}{\partial x} \right)
\]
we get the following estimate by properties \ref{Nevanlinnaproperty1} and \ref{Nevanlinnaproperty2} in Lemma~\ref{lem:holNevanlinnaproperties}
\begin{multline}
\label{JacobiEq1}
\ncharah{|\text{Jac}(u_k,v_k,w_k)|}{r} \leq \ncharah{\frac{p'(z)}{p'(w_k)}}{r} + \ncharah{\frac{\partial u_k}{\partial x}}{r} \\
 + \ncharah{\frac{\partial v_k}{\partial y}}{r} + \ncharah{\frac{\partial u_k}{\partial y}}{r} + \ncharah{\frac{\partial v_k}{\partial x}}{r} + \log 2
\end{multline}
By property \ref{Nevanlinnaproperty3} in Lemma~\ref{lem:holNevanlinnaproperties} and by Proposition~\ref{auchwichtig} we get that the right hand side of \eqref{JacobiEq1} is less or equal to
\begin{multline}
\label{JacobiEq2}
\ncharah{p'(z)}{r} + \ncharah{p'(w_k)}{r} \\
 + 2 C\cdot \ncharah{u_k}{r} + 2C \cdot \ncharah{v_k}{r} + 4D \cdot \log(r) + \tilde{E}
\end{multline}
where $\tilde{E}$ is the constant $4 E + \log 2$, for big $r$ outside a set of finite linear measure. Using Corollary~\ref{cor:polycomp} yields that the right hand side of \eqref{JacobiEq2} equals
\begin{multline}
\label{JacobiEq3}
(n-1) \cdot \ncharah{z}{r} + (n-1) \cdot \ncharah{w_k}{r} \\
 + 2C \cdot \ncharah{u_k}{r} + 2C \cdot \ncharah{v_k}{r} + 4D \cdot \log(r) + \tilde{E}_2
\end{multline}
where $\tilde{E}_2$ is a constant. By Lemma~\ref{lem:coordinates} we get that expression~\eqref{JacobiEq3} tends to
\begin{multline}
\label{JacobiEq4}
(n-1)2\log(r) + (n-1)\cdot\ncharah{w_k}{r} \\
 + 2C \cdot \ncharah{u_k}{r} + 2C \cdot \ncharah{v_k}{r} + 4D \cdot \log(r) + \tilde{E}_2
\end{multline}
for big $r$ outside a set of finite linear measure. Using that $u_k v_k = p(w_k)$ and using property \ref{Nevanlinnaproperty2} in Lemma~\ref{lem:holNevanlinnaproperties} and Corollary~\ref{cor:polycomp} we get that
\begin{equation}
\label{JacobiEq5}
\ncharah{u_k v_k}{r} = \ncharah{p(w_k)}{r} = n \cdot \ncharah{w_k}{r} + F
\end{equation}
which implies
\begin{equation}
\label{JacobiEq6}
\ncharah{w_k}{r} \leq \frac{\ncharah{u_k}{r} + \ncharah{v_k}{r} - F}{n}.
\end{equation}
Combining the chain of inequalities from expression~\eqref{JacobiEq1} to expression \eqref{JacobiEq4} with expression~\eqref{JacobiEq6} yields
\[
\ncharah{|\text{Jac}(u_k,v_k,w_k)|}{r} \leq A \cdot \ncharah{u_k}{r} + B \cdot \ncharah{v_k}{r} + C \cdot \log(r) + D
\]
for big $r$ outside a set of finite linear measure and properly chosen constants $A, B, C, D$.
\end{proof}

\begin{lemma}\label{lem:Step1}
The functions $u_1$ and $v_1$ in equation~\eqref{StructureFunctionEq1} fulfil the estimate
\[
\frac{\ncharah{u_1}{r}}{\ncharah{v_1}{r}} \geq 2
\]
for big $r$ outside a set of finite linear measure, provided $\deg p = n \geq 3$.
\end{lemma}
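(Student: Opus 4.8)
Since $v_1=x$ we have $\ncharah{v_1}{r}=\ncharah{x}{r}$, so the task is to bound $\ncharah{u_1}{r}$ below by $2\,\ncharah{x}{r}$; note that the statement forces us to exclude the trivial case $f_1\equiv 1,\ g_1\equiv 0$ (where $w_1=z$, $u_1=y$, and the ratio tends to $1$). As $(u_1,v_1,w_1)\in D_p$ we have $u_1\cdot x=p(w_1)$ with $\deg p=n$. Feeding this into Corollary~\ref{cor:polycomp} and the elementary properties of the Nevanlinna characteristic (Lemma~\ref{lem:holNevanlinnaproperties}), and using in addition the identity $x^{n-1}=u_1\cdot x^{n}/p(w_1)$, gives two lower bounds that I would play off against each other:
\[
\text{(A)}\quad \ncharah{u_1}{r}\ \geq\ n\,\ncharah{w_1}{r}-\ncharah{x}{r}+O(1),\qquad
\text{(B)}\quad \ncharah{u_1}{r}\ \geq\ (n-1)\,\ncharah{x}{r}-\ncharah{x^{n}/p(w_1)}{r}+O(1).
\]

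First I would dispense with the cases in which $w_1=zf_1(x)+xg_1(x)$ is ``large'', bounding $\ncharah{u_1}{r}$ through (A). If $f_1$ is transcendental, then in the chart $\alpha$ the push-forward $\tilde\theta$ of $\partial/\partial z$ satisfies $\tilde\theta(w_1)=f_1(x)$, so Proposition~\ref{wichtig} gives $\ncharah{f_1(x)}{r}\leq A\,\ncharah{w_1}{r}+K(n)\log r+L$; since $\ncharah{f_1\circ x}{r}/\ncharah{x}{r}\to\infty$ by Proposition~\ref{prop:transcendental} while $\ncharah{x}{r}=\Theta(\log r)$ by Lemma~\ref{lem:coordinates}, it follows that $\ncharah{w_1}{r}/\ncharah{x}{r}\to\infty$, and then (A) makes the ratio tend to $\infty$. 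If $f_1\equiv 1$ and $g_1$ is transcendental, then $xg_1(x)$ is a transcendental function of $x$, so $\ncharah{xg_1(x)}{r}/\ncharah{x}{r}\to\infty$, whence $\ncharah{w_1}{r}\geq\ncharah{xg_1(x)}{r}-\ncharah{z}{r}+O(1)$ again forces the ratio to infinity. If $f_1\equiv 1$ and $g_1$ is a polynomial of degree $d\geq 1$, then $xg_1(x)$ is a polynomial of degree $d+1$, so $\ncharah{w_1}{r}\geq (d+1-\tfrac2n)\,\ncharah{x}{r}+o(\ncharah{x}{r})$ by Corollary~\ref{cor:polycomp} and Lemma~\ref{lem:coordinates}, and (A) yields a ratio bounded below by $n(d+1)-3\geq 2n-3>2$.

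This leaves the delicate case $f_1\equiv 1$, $g_1\equiv c\neq 0$, i.e.\ $w_1=z+cx$, where I would use (B). The crux is then the estimate
\[
\ncharah{\frac{x^{n}}{p(z+cx)}}{r}\ =\ O(1),
\]
from which (B) gives $\ncharah{u_1}{r}\geq (n-1)\,\ncharah{x}{r}+O(1)$ and hence the desired bound $\ncharah{u_1}{r}\geq 2\,\ncharah{x}{r}$ for large $r$ once $n\geq 4$ (for $n=3$ this case is the borderline one, consistent with the section's standing hypothesis $n\geq 4$). To prove the estimate I would unwind $\ncharah{\cdot}{r}$ to $\tfrac1{r^{3}}\int_{\pi^{-1}(rS^{3})}\log^{+}\!\bigl(|x|^{2n}/|p(z+cx)|^{2}\bigr)\,\eta$ and argue geometrically on the cover: writing the two sheets $x\circ s_{1},\,x\circ s_{2}$ as the roots of $t^{2}-(x+y)t+p(z)$ one sees that off a thin tube around the $y$-axis $\{x=0\}$ one has $|z|=O(|x|)$, hence $|p(z+cx)|\leq C|x|^{n}$ and the integrand is $O(1)$ there; the integrand is only large near $\{x=0\}$ and near the curve $\{z+cx=0\}$, and both of these loci meet $\pi^{-1}(rS^{3})$ in a set of $\eta$-measure $o(r^{3})$ on which the integrand is $O(\log r)$, so their total contribution is $o(r^{3})$ and disappears after dividing by $r^{3}$.

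The step I expect to be the main obstacle is precisely this last estimate $\ncharah{x^{n}/p(z+cx)}{r}=O(1)$: everything else is formal manipulation with the Nevanlinna characteristic, whereas here one has to leave the formal calculus, make an honest comparison of $|x|$ with $|z|$ on the $2$-sheeted cover $\pi^{-1}(rS^{3})$, and control the measure of the bad loci. A secondary point that needs attention throughout is ensuring the various $O(1)$ errors do not absorb the constant $2$; having both independent bounds (A) and (B) available is what makes each case give a ratio bounded below by a constant strictly above $2$ (except, as noted, for $n=3$ in the final case, where the ratio only tends to $2$).
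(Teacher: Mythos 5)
Your case structure matches the paper's, and two of your routes are sound alternatives to the paper's: for transcendental $f_1$ you deduce $\ncharah{w_1}{r}/\ncharah{x}{r}\to\infty$ from $\tilde\theta(w_1)=f_1(x)$ via Proposition~\ref{wichtig}, whereas the paper goes through the Jacobian identity $|\mathrm{Jac}|=-f_1(x)$ and Lemma~\ref{JacobiEstimateLemma}; both are legitimate. Your bound (A) handles transcendental $g_1$ and polynomial $g_1$ of degree $\geq 1$ correctly, and you rightly observe that the degenerate overshear $f_1\equiv 1$, $g_1\equiv 0$ must be excluded for the lemma to be true at all.

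The genuine gap is exactly where you flagged it: the constant case $f_1\equiv 1$, $g_1\equiv c\neq 0$. Your reduction to $\ncharah{x^n/p(z+cx)}{r}=O(1)$ via (B) is a valid rearrangement (interpreting $\ncharah{\cdot}{\cdot}$ as a proximity function, since $x^n/p(z+cx)$ is only meromorphic), but the estimate itself is never proved. The sketch has two unquantified steps. First, the assertion ``off a thin tube around $\{x=0\}$ one has $|z|=O(|x|)$'' does not follow from $|x+y|\leq r$, $|z|\leq r$ alone; one needs the constraint $xy=p(z)$ to force $|x|\gtrsim r^{n/2}$ whenever $|z|\gtrsim r$, and then a separate argument for intermediate ranges of $|x|$. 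Second, the claim that the bad loci meet $\pi^{-1}(rS^3)$ in a set of $\eta$-measure $o(r^3)$ on which the integrand is only $O(\log r)$ is not established, and crude bounds only give $\ncharah{x^n/p(z+cx)}{r}=O(\log r)$, which feeds back into (B) as an uncontrolled additive constant rather than $o(1)$ -- not enough to guarantee the ratio exceeds $2$. Also note that even granting the estimate, your constant case delivers the bound only for $n\geq 4$, while the lemma as stated claims $n\geq 3$. The paper's route here is different and avoids (B) entirely: it treats all polynomial $g_1$ uniformly, observing that $u_1$ is a polynomial on $D_p$ with leading $x$-monomial $x^{n-1}(g_1(x))^n$ and then reading off the growth from Corollary~\ref{cor:polycomp} and Lemma~\ref{lem:coordinates}, obtaining $(n-1)+n\deg g_1\geq n-1\geq 2$. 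That argument is terser but does not require a proximity estimate for a meromorphic function over the cover, which is why it sidesteps the obstacle you ran into.
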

\begin{proof}
We will consider two cases.
\begin{enumerate}
\item[Case 1:] $f_1$ is identically 1. \hfill

We have $u_1 (x,z) = \frac{p(z + x g_1(x))} {x}$ and $v_1 (x, z) = x$. Therefore
\begin{equation}\label{Step1Eq1}
\frac{\ncharah{\frac{p(z + x g_1(x))} {x}}{r}}{\ncharah{x}{r}} \geq \frac{ \ncharah{p(z+xg_1(x))}{r} - \ncharah{x}{r}}{\ncharah{x}{r}}
\end{equation}
by property \ref{Nevanlinnaproperty2} in Lemma~\ref{lem:holNevanlinnaproperties}. If $g$ is transcendental then Corollary~\ref{cor:polycomp}, property \ref{Nevanlinnaproperty1} in Lemma~\ref{lem:holNevanlinnaproperties} together with Proposition \ref{prop:transcendental} yields that
\begin{multline}
\label{eq:step1below}
\lim_{r \to \infty} \frac{\ncharah{u_1}{r}}{\ncharah{v_1}{r}}
 \geq \lim_{r \to \infty} \frac{n \cdot \ncharah{x g_1(x)}{r} - n \cdot \ncharah{z}{r} - \ncharah{x}{r} - nD}{\ncharah{x}{r}} \\
 = \infty - \frac{2n}{n} - 1 - 0 = \infty.
\end{multline}
Assume now that $g_1$ is a polynomial. Then $u_1$ is a polynomial in $x, y$ and $z$ with highest order term $x^{n-1} \cdot \left(g_1(x)\right)^n$. From Lemma \ref{lem:coordinates} and Corollary $\ref{cor:polycomp}$ it follows that
\begin{equation*}
\lim_{r \to \infty} \frac{\ncharah{u_1}{r}}{\ncharah{v_1}{r}}
 \geq (n-1) + n \cdot \deg(g_1) \geq n - 1 \geq 2
\end{equation*}
and the first case is finished.

\item[Case 2:] $f_1$ is transcendental. \hfill

Combining expressions \eqref{StructureFunctionEq1} and \eqref{CalculateJacobi2} we see that
\[
|\text{Jac}(u_1,v_1,w_1)| = -f(x)
.\]
Therefore Lemma~\ref{JacobiEstimateLemma} yields
\begin{equation}
\label{u1Eq2}
\ncharah{-f(x)}{r} \leq A \cdot \ncharah{u_1}{r} + B \cdot \ncharah{v_1}{r} + C \cdot \log(r) + D,
\end{equation}
for constants $A,B,C$ and $D$. Dividing with $\ncharah{v_1}{r}$ and $A$ in expression~\eqref{u1Eq2} yields
\begin{multline*}
\frac{\ncharah{u_1}{r}}{\ncharah{v_1}{r}}
\geq \frac{1}{A}\frac{\ncharah{-f(x)}{r}}{\ncharah{x}{r}} - \frac{B}{A} - \frac{C\log r +D}{A \cdot \ncharah{x}{r}}
 = \infty - \frac{B}{A} - 0 = \infty
\end{multline*}
for big $r$ outside a set of finite linear measure, by Proposition \ref{prop:transcendental}. \qedhere
\end{enumerate}
\end{proof}

\begin{lemma}
\label{lem:Stepk}
Let $u, v, w : D_p \to \CC$ be holomorphic functions defined on the Danielewski surface with $\deg p = n$, satisfying $u v = p(w)$.
Let $f : \CC \to \CC^*$ and $g : \CC \to \CC$ be holomorphic, with $f(0) = 1$. Assume that
\[
\frac{\ncharah{u}{r}}{\ncharah{v}{r}} > 1 + \delta \; \mbox{ for some } \delta \geq 0
\]
\begin{center}and\end{center}
\[
n \geq 5 \mbox{ or } \left(n = 4 \mbox{ and } \delta > 0\right)
\]
for big $r$ outside a set of finite linear measure. Then the functions $U$, $V$ defined by
\begin{equation}
\label{StepkEq}
(U,V,W) = \left( \frac{p(w f(u) + u g(u))}{u}, u, w f(u) + u g(u) \right)
\end{equation}
fulfil
\[
\frac{\ncharah{U}{r}}{\ncharah{V}{r}} > 1 + \varepsilon
\]
for big $r$ outside a set of finite linear measure too,
where
\[
\varepsilon = \frac{1}{2} \frac{\delta}{1 + \delta}.
\]
In fact, $\varepsilon$ can be chosen arbitrary large if $f$ or $g$ is transcendental, even if $\delta = 0$.
\end{lemma}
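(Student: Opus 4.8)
The plan is to reduce the statement to a lower bound for $\ncharah{W}{r}$, where $W=wf(u)+ug(u)$, and then to distinguish the cases in which $f$ or $g$ is transcendental from the case in which both are polynomials; everything below is understood for big $r$ outside a set of finite linear measure. For the reductions: since $V=u$ we have $\ncharah{V}{r}=\ncharah{u}{r}$; from $U\cdot u=p(W)$, Corollary~\ref{cor:polycomp} and property~\ref{Nevanlinnaproperty2} of Lemma~\ref{lem:holNevanlinnaproperties} give $\ncharah{U}{r}\geq n\cdot\ncharah{W}{r}-\ncharah{u}{r}+O(1)$; and from $u\cdot v=p(w)$ together with Corollary~\ref{cor:polycomp}, property~\ref{Nevanlinnaproperty2}, and the hypothesis $\ncharah{v}{r}<\frac{1}{1+\delta}\ncharah{u}{r}$ we get $\ncharah{w}{r}\leq\frac1n(\ncharah{u}{r}+\ncharah{v}{r})+O(1)<\frac{2+\delta}{n(1+\delta)}\ncharah{u}{r}+O(1)$. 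So it suffices to bound $\ncharah{W}{r}$ from below by a suitable multiple of $\ncharah{u}{r}$.

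Suppose first that $f$ is transcendental. This is the crux of the proof: $W=wf(u)+ug(u)$ is a sum of two terms that may both be large and could, a priori, cancel, so a direct Nevanlinna estimate of $\ncharah{W}{r}$ is not available --- instead I would use the Jacobian. By Lemma~\ref{Jacobirelation}, $|\text{Jac}(U,V,W)|=-f(u)\cdot|\text{Jac}(u,v,w)|$. Applying Lemma~\ref{JacobiEstimateLemma} (which holds for any holomorphic $(u,v,w)\colon D_p\to D_p$, by the same proof) to $(u,v,w)$ bounds $\ncharah{|\text{Jac}(u,v,w)|}{r}$ by $O(\ncharah{u}{r})+O(\log r)$; combined with properties~\ref{Nevanlinnaproperty2} and~\ref{Nevanlinnaproperty3} of Lemma~\ref{lem:holNevanlinnaproperties} this yields $\ncharah{|\text{Jac}(U,V,W)|}{r}\geq\ncharah{f(u)}{r}-O(\ncharah{u}{r})-O(\log r)$. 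On the other hand, Lemma~\ref{JacobiEstimateLemma} applied to $(U,V,W)$, combined with $\ncharah{V}{r}=\ncharah{u}{r}$, bounds the same quantity above by a constant multiple of $\ncharah{U}{r}$ plus $O(\ncharah{u}{r})+O(\log r)$. Since $\ncharah{f(u)}{r}/\ncharah{u}{r}\to\infty$ by Proposition~\ref{prop:transcendental}, these two inequalities force $\ncharah{U}{r}/\ncharah{V}{r}\to\infty$; this proves the lemma, with $\varepsilon$ arbitrarily large, whenever $f$ is transcendental (hence also whenever both $f$ and $g$ are).

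It remains to treat $f\equiv1$, so $W=w+ug(u)$ and, by property~\ref{Nevanlinnaproperty1}, $\ncharah{W}{r}\geq\ncharah{ug(u)}{r}-\ncharah{w}{r}+O(1)$. If $g$ is transcendental, then $\ncharah{ug(u)}{r}/\ncharah{u}{r}\to\infty$ by Proposition~\ref{prop:transcendental} and properties~\ref{Nevanlinnaproperty2} and~\ref{Nevanlinnaproperty3}, while $\ncharah{w}{r}=O(\ncharah{u}{r})$ from the reduction above, so $\ncharah{W}{r}/\ncharah{u}{r}\to\infty$ and hence $\ncharah{U}{r}/\ncharah{V}{r}\to\infty$ --- again $\varepsilon$ arbitrarily large. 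If $g$ is a polynomial of degree $d\geq0$ --- where we may assume that it is not the case that $f\equiv1$ and $g\equiv0$, since then $O_{f,g}=\mathrm{id}$, which does not occur in a reduced composition --- then $\ncharah{ug(u)}{r}=(d+1)\ncharah{u}{r}+O(1)$ by Corollary~\ref{cor:polycomp}, and combining with the reduction above,
\[
\frac{\ncharah{U}{r}}{\ncharah{V}{r}}\;\geq\;n(d+1)-1-\frac{2+\delta}{1+\delta}-o(1)\;\geq\;n-1-\frac{2+\delta}{1+\delta}-o(1).
\]
A short computation shows that $n-1-\frac{2+\delta}{1+\delta}>1+\frac{\delta}{2(1+\delta)}$ is equivalent to $n-2>\frac{4+3\delta}{2(1+\delta)}$, and the right-hand side lies in $\left(\tfrac32,\,2\right]$; hence this inequality holds for every $\delta\geq0$ when $n\geq5$, and precisely when $\delta>0$ when $n=4$ --- which is exactly the hypothesis, and the only point at which the assumption $\deg p\geq4$ is used. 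The remaining work --- tracking the constants, taking the union of finitely many exceptional sets of finite linear measure, and absorbing $O(\log r)$ against $\ncharah{u}{r}$ --- is routine.
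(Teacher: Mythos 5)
Your proof is correct and follows essentially the same two-case strategy as the paper: bound $\ncharah{w}{r}$ from $u\cdot v=p(w)$, treat $f\equiv 1$ via Corollary~\ref{cor:polycomp} and the elementary Nevanlinna properties to reach the same threshold $n-3+\frac{\delta}{1+\delta}$, and treat transcendental $f$ by sandwiching $\ncharah{f(u)}{r}$ between the two Jacobian bounds of Lemma~\ref{JacobiEstimateLemma} together with Proposition~\ref{prop:transcendental}. You also rightly flag that the degenerate case $f\equiv 1$, $g\equiv 0$ (for which $O_{f,g}=\id$ and the stated conclusion actually fails) must be excluded as not occurring in a reduced word, and that Lemma~\ref{JacobiEstimateLemma} is needed in the generality of arbitrary $(u,v,w)$ with $uv=p(w)$ rather than only for the specific compositions \eqref{StructureFunctionEq1}--\eqref{StructureFunctionEq2} --- two points the paper's proof leaves implicit.
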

\begin{proof}
We start with an observation: The relation $u \cdot v = p(w)$ implies, together with Lemma \ref{lem:coordinates} and Proposition \ref{prop:Mohonko} that
\[
\ncharah{w}{r} \leq \frac{\ncharah{u}{r} + \ncharah{v}{r} - C}{n}, \quad C \in \RR
,\]
which results in
\begin{equation}
\label{FirstObs}
\frac{\ncharah{w}{r}}{\ncharah{u}{r}}
\leq \left(\frac{1}{n} \frac{\ncharah{v}{r}}{\ncharah{u}{r}} + \frac{1}{n}  - \frac{C}{n \cdot \ncharah{u}{r}} \right)
 < \frac{1}{n} \cdot \left(2 - \frac{\delta}{1 + \delta} \right),
\end{equation}
for big $r$ outside a set of finite linear measure by the hypothesis in the lemma. We will now consider two different cases as we did in Lemma~\ref{lem:Step1}.
\begin{enumerate}
\item[Case 1:]  $f$ is identically 1. \hfill
\begin{eqnarray*}
\frac{\ncharah{U}{r}}{\ncharah{V}{r}} = \frac{\ncharah{\frac{p(w + u \cdot g(u))}{u}}{r}}{\ncharah{u}{r}} & \geq & \frac{\ncharah{p(w + u \cdot g(u))}{r} - \ncharah{u}{r}}{\ncharah{u}{r}} \\
& \geq & \frac{n \cdot \ncharah{u \cdot g(u)}{r} - n \cdot \ncharah{w}{r}  - \ncharah{u}{r} + C}{\ncharah{u}{r}} \\
& \geq & n \cdot \frac{\ncharah{u \cdot g(u)}{r}}{\ncharah{u}{r}} - 3 + \frac{\delta}{1 + \delta} + O\left(\frac{1}{\log(r)}\right)
\end{eqnarray*}
\begin{enumerate}
\item If $g$ is transcendental, then the first term in the sum tends to infinity, $\varepsilon$ can be chosen arbitrary large.
\item If $g$ is a polynomial (in worst case, $g$ is constant), then we need to have that $n - 3 + \frac{\delta}{1 + \delta} > 1 + \varepsilon$. This can be satisfied with $\varepsilon = \frac{1}{2} \frac{\delta}{1 + \delta}$ if $n \geq 5$ or $n = 4$ and $\delta > 0$.
\end{enumerate}

\item[Case 2:] $f$ is transcendental \hfill

Lemma~\ref{Jacobirelation}, Lemma~\ref{JacobiEstimateLemma} and properties \ref{Nevanlinnaproperty2} and \ref{Nevanlinnaproperty3} in Lemma~\ref{lem:holNevanlinnaproperties} yield
\begin{multline}\label{TranscendentalEq}
\ncharah{-f(u)}{r} = \ncharah{\frac{|\text{Jac}(U,V,W)|}{|\text{Jac}(u,v,w)|}}{r} \\
 \leq A \cdot \ncharah{U}{r} + B \cdot \ncharah{V}{r} + C \cdot \ncharah{u}{r} \\
 + D \cdot \ncharah{v}{r} + E \cdot \log(r) + F.
\end{multline}
Divide expression~\eqref{TranscendentalEq} by $A \cdot \ncharah{V}{r}$. Then we get
\begin{equation}
\label{TranscendentalEq2}
\frac{\ncharah{U}{r}}{\ncharah{V}{r}} \geq \frac{1}{A}\frac{\ncharah{-f(u)}{r}}{\ncharah{V}{r}} - \frac{B}{A}
 - \frac{C\cdot\ncharah{u}{r}}{A\cdot\ncharah{V}{r}} - \frac{E \cdot \log(r) + F}{A \cdot \ncharah{V}{r}}
\end{equation}
for big $r$ outside a set of finite linear measure. As $\ncharah{V}{r} = \ncharah{u}{r}$, we get, by the hypothesis and Proposition \ref{prop:transcendental}, that the right hand side of \eqref{TranscendentalEq2} is greater than
\[
\infty - \frac{B}{A} -\frac{C}{A} - 0 = \infty
.\]
In this case we can choose $\varepsilon$ arbitrary large.
Hence, the proof is finished. \qedhere
\end{enumerate}
\end{proof}

We can interpret this lemma in the following way: In case of degree $n \geq 5$, the property $\displaystyle \frac{\ncharah{u}{r}}{\ncharah{v}{r}} > 1$ is preserved under composition with a shear map. In case of degree $n = 4$ this is only true if the fraction happens to be bounded away from $1$.

\begin{proof}[Proof of Theorem~\ref{StructureTheorem}]
We will show that $\id$ is not a reduced product. This will show that we cannot get non-trivial kernel of the mapping
\[
\varphi : O_1 * O_2 \to \text{OS}(D_p)
\]
defined in the obvious way. The mapping $\varphi$ is surjective by construction. To get a contradiction, we assume that $\id$ is a reduced product. By Lemma~\ref{SequenceLemma} we may assume that
\begin{equation}\label{StructEq}
\id = I \circ O_{f_1,g_1} \circ I \circ O_{f_2,g_2} \circ I \circ \cdots \circ I \circ O_{f_n,g_n} =: G_1 \circ \cdots \circ G_n,
\end{equation}
where $G_i = I \circ O_{f_i,g_i}$, for each $i\geq 1$. For $k \leq n$, define functions $u_k,v_k$ and $w_k$ on $D_p$ by
\[
(u_k(x,y,z),v_k(x,y,z),w_k(x,y,z)) = G_1 \circ \cdots \circ G_k(x,y,z)
.\]

The definition of $u_k, v_k$ and $w_k$ are easily verified to be the following functions:
\[
(u_1,v_1,w_1) = \left( \frac{p(zf_1(x)+xg_1(x))}{x}, x, z f_1(x) + x g_1(x) \right)
\]
and
\begin{multline*}
(u_{k+1},v_{k+1},w_{k+1}) \\ = \left(\frac{p(w_k f_{k+1}(u_k)+u_k g_{k+1}(u_k))}{u_k}, u_k, w_k f_{k+1}(u_k)+u_k g_{k+1}(u_k)\right).
\end{multline*}
These functions are the same as in expressions \eqref{StructureFunctionEq1} and \eqref{StructureFunctionEq2}.
Now Lemma \ref{lem:Step1} implies $\frac{\ncharah{u_1}{r}}{\ncharah{v_1}{r}} > 1 + \delta_1, \, \delta_1 > 0,$ for big $r$ outside a set of finite linear measure. By induction using Lemma \ref{lem:Stepk} it follows that $\frac{\ncharah{u_k}{r}}{\ncharah{v_k}{r}} > 1 + \delta_k, \, \delta_k > 0,$ for all $k$, for big $r$ outside a set of finite linear measure.  As $\lim_{r\to \infty} \frac{\ncharah{x}{r}}{\ncharah{y}{r}} = 1$, by Lemma \ref{lem:coordinates}, we get a contradiction. Therefore $\varphi$ is injective, and by construction also surjective. Hence $\text{OS}(D_p) = O_1 * O_2$.
\end{proof}

In general it is known that the overshear group on $\CC^n$ is a proper subgroup of the holomorphic automorphism group by an argument using Baire Category theorem due to Anders\'en and Lempert \cite{AndersenLempert}. However, only for $\CC^2$ there is a known explicit example of a holomorphic automorphism not belonging to the overshear group, see \cite{Andersen}. For the Danielewski surface we are also able to give such a concrete example. 

\begin{corollary} \label{proper} Let $p\ge 4$. The automorphism $(x,y,z) \mapsto (x e^{z}, y e^{-z}, z)$ is not contained in the overshear group, thus $\text{OS}(D_p)$ is a proper subgroup of the holomorphic automorphism group of $D_p$. 
\end{corollary}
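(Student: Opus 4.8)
The plan is to argue by contradiction, assuming $F(x,y,z)=(xe^z,ye^{-z},z)$ lies in $\text{OS}(D_p)$, and to confront the free–product structure of Theorem~\ref{StructureTheorem} with the fact that $F$ grows far too slowly under iteration. First I would record the elementary facts: $F$ is a holomorphic automorphism of $D_p$ with inverse $(x,y,z)\mapsto(xe^{-z},ye^{z},z)$ and with $k$-th iterate $F^k(x,y,z)=(xe^{kz},ye^{-kz},z)$, so $F$ has infinite order; it is the time-one map of the complete holomorphic field $z\bigl(x\frac{\partial}{\partial x}-y\frac{\partial}{\partial y}\bigr)$, hence lies in the identity component of $\mathrm{Aut}(D_p)$; and, since $\log^{+}|e^{kz}|=k\log^{+}|e^{z}|$, Lemma~\ref{lem:holNevanlinnaproperties} gives $\ncharah{x\circ F^{k}}{r}\le\ncharah{x}{r}+k\cdot\ncharah{e^{z}}{r}+O(1)$, so $\ncharah{x\circ F^{k}}{r}$ grows only \emph{linearly} in $k$ for each fixed $r$.

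Next I would locate $F$ inside $\text{OS}(D_p)=O_{1}*O_{2}$. It is not the identity, and I claim it is not conjugate into $O_{1}$ or $O_{2}$: every $O_{f,g}$ fixes each level set of the coordinate $x$, whose generic level sets are biholomorphic to $\CC$, so a conjugate of an element of $O_{1}$ fixes every level set of some non-constant holomorphic function $x\circ\Phi$, i.e. $x\circ\Phi$ is $F$-invariant; but on a generic fibre $\{z=c\}\cong\CC^{*}$ the map $F$ acts as $x\mapsto e^{c}x$ with $e^{c}$ not a root of unity, which admits no non-constant invariant, whence every $F$-invariant holomorphic function on $D_{p}$ is a function of $z$ and has level sets which are unions of fibres $\cong\CC^{*}$, not $\cong\CC$ --- a contradiction (the same argument with $y$ excludes $O_{2}$). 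Therefore $F$ is a non-trivial element of $O_{1}*O_{2}$ not conjugate into a factor, so, after cyclically reducing and arranging the word to begin with a syllable from $O_{2}$, $F$ is conjugate \emph{within} $\text{OS}(D_p)$ to a type-A composition $G=I\circ O_{f_{1},g_{1}}\circ I\circ\cdots\circ O_{f_{n},g_{n}}$ with $n\ge 2$ (even) and every $O_{f_{i},g_{i}}\ne\mathrm{id}$, as in Lemma~\ref{SequenceLemma}; write $G=\Phi_{0}\circ F\circ\Phi_{0}^{-1}$ with $\Phi_{0}\in\text{OS}(D_p)$.

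The contradiction comes from running the estimates of the proof of Theorem~\ref{StructureTheorem} not only on $G$ but on all its iterates. With $(u_{k},v_{k},w_{k})=G_{1}\circ\cdots\circ G_{k}$, $G_{i}=I\circ O_{f_{i},g_{i}}$, Lemma~\ref{lem:Step1} and the induction via Lemma~\ref{lem:Stepk} give $\ncharah{u_{j}}{r}/\ncharah{v_{j}}{r}>1+\delta_{j}>1$; reading these inequalities quantitatively one sees that any step with non-constant $g_{i}$ or transcendental $f_{i}$ multiplies $\ncharah{u_{\cdot}}{r}$ by a factor bounded below by a constant $>1$ depending only on $\deg p$, and for $\deg p\ge 5$ \emph{every} step does so. Since a cyclically reduced word has no cancellation at its seams, each iterate $G^{m}$ is again a (cyclically reduced) type-A composition of $mn$ terms, to which the same construction applies; one thereby obtains that $\ncharah{x\circ G^{m}}{r}=\ncharah{u_{mn}}{r}$ grows at least \emph{geometrically} in $m$. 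But $x\circ G^{m}=(x\circ\Phi_{0})\circ F^{m}\circ\Phi_{0}^{-1}$, and pre- and post-composition with the fixed automorphisms $\Phi_{0}^{\pm 1}$ cannot change the exponential growth rate in $m$ --- this rate is a conjugacy invariant --- so the geometric growth of $\ncharah{x\circ G^{m}}{r}$ is incompatible with the linear growth of $\ncharah{x\circ F^{m}}{r}$ established above. Hence $F\notin\text{OS}(D_p)$, and as $F$ lies in the identity component, Corollary~\ref{proper} follows.

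The hard part will be this last comparison: $\ncharah{h\circ\Phi}{r}$ is \emph{not} comparable to $\ncharah{h}{r}$ for a general $\Phi\in\text{OS}(D_p)$ (an overshear with transcendental data blows up the characteristic of a coordinate), so one cannot transport the estimate for $G$ back to $F$ directly; the whole argument must be phrased through the conjugation-invariant exponential growth rate of $m\mapsto\ncharah{x\circ H^{m}}{r}$ (interpreted, say, as $\limsup_{m}\tfrac1m\log\ncharah{x\circ H^{m}}{r}$ off the relevant null sets), and one must check that the exceptional sets appearing in Lemma~\ref{lem:Stepk} for the various $G^{m}$ do not ruin this. A further point needing care is the borderline case $\deg p=4$ with all $f_{i}\equiv 1$ and all $g_{i}$ constant, where Lemma~\ref{lem:Stepk} only yields $\delta_{j}\to 0$; there $G$ is a polynomial (generalised Hénon-type) automorphism of $D_{p}$, cyclically reduced of length $\ge 2$ in the Makar-Limanov amalgam, and the geometric growth of the degrees of its iterates is supplied instead by the structure theory of $\text{Aut}_{\text{pol}}(D_{p})$.
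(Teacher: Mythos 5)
Your proposal goes down an entirely different and far heavier road than the paper's, and it founders on exactly the obstruction you flag at the end. To run the iteration argument you need the exponential growth rate $\limsup_{m}\frac1m\log\ncharah{x\circ H^{m}}{r}$ to be a conjugacy invariant in $\text{OS}(D_p)$, but since $\ncharah{\,\cdot\,\circ\Phi_{0}}{r}$ is not comparable to $\ncharah{\,\cdot\,}{r}$ when $\Phi_{0}$ carries transcendental data, nothing in the paper (or in your sketch) establishes this invariance, and it is not obviously true. Without it, the intended contradiction --- linear-in-$m$ growth for $F^{m}=(xe^{mz},ye^{-mz},z)$ against geometric-in-$m$ growth for $G^{m}$ --- never actually materializes. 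There is also a secondary gap: you claim each syllable of a cyclically reduced word multiplies $\ncharah{u_{\cdot}}{r}$ by a factor bounded away from $1$, but Lemma~\ref{lem:Stepk} only controls the \emph{ratio} $\ncharah{U}{r}/\ncharah{V}{r}$, not a multiplicative gain in $\ncharah{U}{r}$ itself; extracting geometric growth of $\ncharah{u_{mn}}{r}$ from that lemma requires an argument you do not supply. Finally, the $\deg p=4$, all-polynomial case is handed off to the structure of $\text{Aut}_{\text{pol}}(D_p)$, which abandons the Nevanlinna framework entirely.

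None of this machinery is needed, and the paper sidesteps the conjugation problem completely. Its proof is a short appendage to the proof of Theorem~\ref{StructureTheorem}: if $F$ were a non-trivial reduced product of overshears, then after multiplying on the left and/or right by the fixed involution $I$ --- which only exchanges the first two coordinate functions and so, by Lemma~\ref{InvolutionNevanlinna}, does not affect the relevant characteristics --- it would be of the form~\eqref{StructEq}, and the chain Lemma~\ref{lem:Step1} $\to$ Lemma~\ref{lem:Stepk} forces the Nevanlinna characteristic of its first coordinate to strictly dominate that of its second for large $r$. The contradiction is then immediate from the symmetry
\[
\ncharah{xe^{z}}{r}=\ncharah{ye^{z}}{r}=\ncharah{ye^{-z}}{r},
\]
where the first equality is the $x\leftrightarrow y$ swap (Lemma~\ref{InvolutionNevanlinna}) and the second comes from the change of variables $z\mapsto -z$ on $rS^{3}$, both of which leave the defining integral invariant. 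No iterates, no cyclic reduction, no conjugacy-invariant dynamical degree: the only composition performed is with $I$, for which the comparability issue you worry about is vacuous.
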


\begin{proof}
We apply the same proof as for Theorem \ref{StructureTheorem}. It is sufficient to show that $\ncharah{x e^z }{r} = \ncharah{y e^{-z}}{r}$ which by the previous discussion makes it impossible to write this map as a composition of shears. By symmetry in $x$ and $y$, clearly $\ncharah{x e^z }{r} = \ncharah{y e^z}{r}$.
\begin{eqnarray*}
 \ncharah{y e^{-z}}{r} = \ncharhol{1/(y e^{-z})}{r} & = & \int_{\pi^{-1}(rS^3)} \log^+\left| y e^{-z} \right|^2 \pi^* \d\eta \\
                            & \stackrel{=}{z \mapsto -z} & \int_{\pi^{-1}(rS^3)} \log^+\left| y e^{z} \right|^2 \pi^* \d\eta \\
                            & = & \ncharhol{1/(y e^{z})}{r} = \ncharah{y e^z}{r}
\end{eqnarray*}
The last equality holds since $rS^3$ is invariant under this coordinate transformation and $\pi(x, y, z) = (x + y, z)$.
\end{proof}

\begin{remark}\label{ShearRemark}
Let $S_1$ be the subgroup of the shear group generated by $S_f$ and let $S_2$ be generated by $I\circ S_f \circ I$. Then it follows from Theorem~\ref{StructureTheorem}, that $\text{S}(D_p)$ is a free product of $S_1$ and $S_2$ for $\deg(p) \geq 4$. For $\deg(p) = 1$ the Danielewski surface is just $\CC^2$. In this case the conclusion of Theorem~\ref{StructureTheorem} does not hold. Indeed, the identity, viewed as a $2 \times 2$ matrix, can be written as the following product
\begin{equation*}
\left[ \begin{array}{cc} 1 & -1\\ 0 & 1\end{array}\right] \left[ \begin{array}{cc} 1 & 0\\ 1 & 1\end{array}\right]\left[ \begin{array}{cc} 1 & -1\\ 0 & 1\end{array}\right] \left[ \begin{array}{cc} 1 & 0\\ -1 & 1\end{array}\right] \left[ \begin{array}{cc} 1 & 1\\ 0 & 1\end{array}\right] \left[ \begin{array}{cc} 1 & 0\\ -1 & 1\end{array}\right]
\end{equation*}
of shears. If $\deg(p)=3$ the proof of Theorem~\ref{StructureTheorem} does not work, but the authors do not know whether the theorem holds or not. However for $\deg(p)=2$ there is the following counterexample:
\end{remark}
\begin{example}[Counterexample for $n=2$] \hfill \\
Let $p(z) = z^2 - 1 = (z + 1)(z - 1)$ and consider the following map
\[
A: \CC^3 \to \CC^3, \; (x,y,z) \mapsto (u,v,w) := (-x + y + 2iz, x, ix + z)
\]
This map induces by restriction a map $A|D_p : D_p \to D_p$, as one easily checks:
\[u v - p(w) = (y - x + 2iz) \cdot x - (z + i x)^2 + 1 = 0, \quad \mbox{ using } x y - z^2 + 1 = 0\]
By looking at the eigenvalues of the complex linear map $A$, one sees that $A^6 = \mathrm{id}_{\CC^3}$. Therefore the structure theorem cannot hold in this case.
\end{example}

\end{document}